\documentclass{amsart}
\usepackage[english]{babel}
\usepackage[latin1]{inputenc}
\usepackage[dvips,final]{graphics}
\usepackage{amsmath,amsfonts,amssymb,amsthm,amscd,array,stmaryrd,mathrsfs, mathdots, epigraph}
\usepackage{pstricks}
 \usepackage[all]{xy}
 \usepackage{url}
\usepackage{multirow, blkarray}
\usepackage{booktabs}
\usepackage{textcomp}
 \usepackage[final]{epsfig}
\vfuzz2pt 
\hfuzz2pt 
\setlength{\textwidth}{14.5truecm}
\setlength{\textheight}{20truecm}
\setlength{\hoffset}{-1.5truecm}

\let\ssection=\section
\renewcommand{\section}{\setcounter{equation}{0}\ssection}

\newtheorem{lemma}{Lemma}[subsection]
\newtheorem{proposition}[lemma]{Proposition}
\newtheorem{theorem}[lemma]{Theorem}
\newtheorem{corollary}[lemma]{Corollary}

\theoremstyle{definition}
\newtheorem{example}[lemma]{Example}
\newtheorem{definition}[lemma]{Definition}
\newtheorem{notation}[lemma]{Notation}
\newtheorem{remark}[lemma]{Remark}

\newcommand{\s}{{\sigma}}

\newcommand{\proofend}{$\Box$\bigskip}

\newcommand{\R}{{\mathbb R}}
\newcommand{\T}{{\mathbb T}}
\newcommand{\Z}{{\mathbb Z}}

\newcommand{\RP}{{\mathbb {RP}}}  
\newcommand{\PP}{{\mathbb {P}}} 
\newcommand{\SL}{{\mathrm {SL}}}  
\newcommand{\GL}{{\mathrm {GL}}}
\newcommand{\Mat}{{\mathrm {Mat}}}

\newcommand{\Gr}{{\mathrm {Gr}}}  

\newcommand{\Gc}{{\mathcal G}}
\newcommand{\Cc}{{\mathcal C}}
\newcommand{\Ec}{{\mathcal E}}
\newcommand{\Fc}{{\mathcal F}}
\newcommand{\Sc}{{\mathcal S}}
\newcommand{\bfi}{{\mathbf {i}}}
\newcommand{\bft}{{\mathbf {t}}}

\begin{document}

\title[Difference equations, frieze patterns, combinatorial Gale transform]{
Linear difference equations, frieze patterns and combinatorial Gale transform}

\keywords{linear difference equations, frieze patterns, moduli spaces of $n$-gons,
rational periodic maps, Gale transform}

\author[S. Morier-Genoud]{Sophie Morier-Genoud}

\author[V. Ovsienko]{Valentin Ovsienko}

\author[R.E. Schwartz]{Richard Evan Schwartz}

\author[S. Tabachnikov]{Serge Tabachnikov}

\address{Sophie Morier-Genoud,
Institut de Math\'ematiques de Jussieu
UMR 7586
Universit\'e Pierre et Marie Curie
4, place Jussieu, case 247
75252 Paris Cedex 05
}

\address{
Valentin Ovsienko,
CNRS,
Institut Camille Jordan,
Universit\'e Claude Bernard Lyon~1,
43 boulevard du 11 novembre 1918,
69622 Villeurbanne cedex,
France}

\address{
Richard Evan Schwartz,
Department of Mathematics, 
Brown University, 
Providence, RI 02912, USA
}

\address{
Serge Tabachnikov,
Pennsylvania State University,
Department of Mathematics,
University Park, PA 16802, USA,
and 
ICERM, Brown University, Box1995,
Providence, RI 02912, USA
}

\email{sophiemg@math.jussieu.fr,
ovsienko@math.univ-lyon1.fr,
res@math.brown.edu,
tabachni@math.psu.edu
}

\date{}

\subjclass{}

\maketitle

\hfill{ \it To the memory of Andrei Zelevinsky}

\begin{abstract}
We study the space of linear difference equations with periodic
coefficients and (anti)periodic solutions.
We show that this space is isomorphic to the space of
tame frieze patterns and closely related to the moduli space of configurations
of points in the projective space.
We define the notion of combinatorial Gale transform which is a duality
between periodic difference equations of different orders. 
We describe periodic rational maps generalizing the classical Gauss map.
\end{abstract}

\tableofcontents

\section{Introduction}

Linear difference equations appear in many fields of mathematics, and
relate fundamental objects of geometry, algebra and analysis.
In this paper, we study the space of linear difference equations with periodic coefficients
and (anti)periodic solutions.
(The solutions of the equation are periodic if the order of the equation is odd,
and antiperiodic if the order is even.)
The space of such equations is a very interesting algebraic variety.
Despite the fact that this subject is  very old and classical,
this space has not been studied in  detail.

We prove that  the space ${\mathcal E\/}_{k+1,n}$
 of $n$-periodic linear difference equations
of order $k+1$ is equivalent to the space ${\mathcal F\/}_{k+1,n}$ of
tame $\SL_{k+1}$-frieze patterns of width $w=n-k-2$.  (Tameness is
the non-vanishing condition on certain determinants; See Definition \ref{tamedef}.)
We also show that these spaces are closely related to a certain moduli space
${\mathcal C\/}_{k+1,n}$ of $n$-gons
in $k$-dimensional projective space.
While ${\mathcal C\/}_{k+1,n}$ can be viewed as the quotient of the
Grassmannian $\Gr_{k+1,n}$ by a torus action, see~\cite{GM},
the space  ${\mathcal E\/}_{k+1,n}$ is a subvariety of~$\Gr_{k+1,n}$.
We show that, in the case where $k+1$ and $n$ (and thus $w+1$ and $n$) are coprime, 
the two spaces are isomorphic.

A frieze pattern is just another, more combinatorial, way to represent
a linear difference equation with (anti)periodic solutions.
However, the theory of frieze patterns was created~\cite{Cox,CoCo}
and developed independently, see~\cite{Pro,ARS,BR,MGOT,MG}.
The current interest to this subject is due to the relation with the theory
of cluster algebras~\cite{CaCh}.
We show that the isomorphism between  ${\mathcal E\/}_{k+1,n}$ and
${\mathcal F\/}_{k+1,n}$
immediately implies the periodicity statement which is an important part of the theory.
Let us  mention that a more general notion of $\SL_{k+1}$-tiling was studied in~\cite{BR};
a version of $\SL_{3}$-frieze patterns, called $2$-frieze patterns, was studied 
in~\cite{Pro,MGOT}.

Let us also mention that an $\SL_{k+1}$-frieze pattern can be included in a larger pattern
sometimes called a $T$-system (see~\cite{DK} and references therein), 
and better known under the name of
{\it discrete Hirota equation} (see,  for a survey,~\cite{Zab}).
Although an $\SL_{k+1}$-frieze pattern is a small part of a solution of a $T$-system,
it contains all the information about the solution.
We will not use this viewpoint in the present paper.

The main result of this paper is a description of the duality
between the spaces ${\mathcal E\/}_{k+1,n}$ and ${\mathcal E\/}_{w+1,n}$
when $(k+1)+(w+1)=n$.
We call this duality the {\it combinatorial Gale transform}.
This is an analog of the classical Gale transform
which is a well-known duality on the moduli spaces of point configurations,
see \cite{Gal,Cob,Cob1,EP}.
We think that the most interesting feature of the combinatorial Gale transform
is that it allows one to change the order of an equation keeping
all the information about it.

Let us give here the simplest example, 
which is related to Gauss' {\it pentagramma mirificum}~\cite{Gau}.
Consider a third-order difference equation
$$
V_{i}=a_{i}V_{i-1}-b_iV_{i-2}+V_{i-3},
\qquad i\in\Z.
$$
Assume that the coefficients $(a_i)$ and $(b_i)$
are $5$-periodic: $a_{i+5}=a_{i}$ and $b_{i+5}=b_{i}$,
and that all the solutions $(V_i)$ are
also $5$-periodic, i.e., $V_{i+5}=V_i$.
The combinatorial Gale transform in this case consists
in ``forgetting the coefficients $b_i$''; to this equation it associates 
the difference equation of order~$2$:
$$
W_{i}=a_{i}W_{i-1}-W_{i-2}.
$$
It turns out that the solutions of the latter equation are $5$-antiperiodic:
$W_{i+5}=-W_{i}$.
Conversely, as we will explain later,
one can reconstruct the initial third order equation
from the second order one.
Geometrically speaking, this transform sends
(projective equivalence classes of) pentagons in~$\PP^2$ to those in~$\PP^1$. 
In terms of the frieze patterns, this corresponds to a duality between
$5$-periodic Coxeter friezes and $5$-periodic $\SL_3$-friezes.

Our study is motivated by recent works
\cite{OST,Sol,OST1,GSTV,Sch2,MB,MB1,KS,KS1}
on a certain class of discrete integrable systems arising in 
projective differential geometry and cluster algebra.
The best known among these maps is the pentagram map~\cite{Sch,Sch1}
acting on the moduli space of $n$-gons in the projective plane.

This paper is organized as follows.

In Section~\ref{OblectS}, we introduce the main objects,
namely the spaces
${\mathcal E\/}_{k+1,n}$,
${\mathcal F\/}_{k+1,n}$, and
${\mathcal C\/}_{k+1,n}$.

In Section~\ref{GeoS}, 
we construct an embedding of ${\mathcal E\/}_{k+1,n}$
into the Grassmannian $\Gr_{k+1,n}$.
We then formulate the result about the isomorphism between 
${\mathcal E\/}_{k+1,n}$ and ${\mathcal F\/}_{k+1,n}$ and
give an explicit construction of this isomorphism.
We also define a natural map from the space of equations to
that of configurations of points in the projective space.

In Section~\ref{TheGaleS}, we introduce the
Gale transform which is the main notion of this paper.
We show that a difference equation corresponds not to just one,
but to two different frieze patterns.
This is what we call the Gale duality.
We also introduce a more elementary notion of projective duality
that commutes with the Gale transform.

In Section~\ref{DeTS}, we calculate explicitly the entries
of the frieze pattern associated with a difference equation.
We give explicit formulas for the Gale transform.
These formulas are similar to the classical and well-known expressions
often called the Andr\'e determinants, see~\cite{And,Jor}.

Relation of the Gale transform to representation theory is
described in Section~\ref{RePSS}.
We represent an $\SL_{k+1}$-frieze pattern 
(and thus a difference equation) in a form of a
unitriangular matrix.
We prove that the Gale transform coincides with the restriction of the
involution of the nilpotent group of unitriangular matrices
introduced and studied by Berenstein, Fomin, and Zelevinsky~\cite{BFZ}.

In Section~\ref{PerSec}, we present an application of
the isomorphism between difference equations and frieze patterns.
We construct rational
periodic maps generalizing the Gauss map. 
These maps are obtained by calculating consecutive coefficients
of (anti)periodic second and third order difference equations and using
the periodicity property of
$\SL_{2}$- and $\SL_{3}$-frieze patterns.
We also explain how these rational maps can be derived, in an alternate way,
from the projective geometry of polygons.

In Section~\ref{TriThmS}, 
we explain the details about the relations between 
the spaces we consider.
We also outline  relations to the Teichm\"uller theory.

\section{Difference equations, $\SL_{k+1}$-frieze patterns and polygons in $\RP^k$}\label{OblectS}

In this chapter we will define the three closely related spaces
${\mathcal E\/}_{k+1,n}$,
${\mathcal F\/}_{k+1,n}$, and
${\mathcal C\/}_{k+1,n}$ discussed in the introduction.
All three spaces will be equipped with the structure of algebraic variety;
we choose $\R$ as the ground field.

\subsection{Difference equations}
Let ${\mathcal E}_{k+1,n}$ be the space of order $k+1$ difference equations 
\begin{equation}
\label{REq}
V_{i}=a_{i}^1V_{i-1}-a_{i}^{2}V_{i-2}+ \cdots+(-1)^{k-1}a_{i}^{k}V_{i-k}+(-1)^{k}V_{i-k-1},
\end{equation}
where $a_{i}^{j}\in \R$, with $i\in\Z$ and $1\leq j\leq k$, are coefficients 
and $V_i$ are unknowns.
(Note that the superscript $j$ is an index, not a power.)
Throughout the paper, a solution $(V_i)$ will consist of either real numbers, $V_i\in \R$,
or of real vectors $V_i\in\R^{k+1}$.
We always assume that the coefficients are {\it periodic} with some period $n\geq{}k+2$:
$$
a_{i+n}^{j}=a_{i}^{j}
$$ 
for all $i,j$, and that all the solutions are {\it $n$-(anti)periodic}: 
\begin{equation}
\label{APeriod}
V_{i+n}=(-1)^{k}\,V_{i}.
\end{equation}

The algebraic variety structure on ${\mathcal E}_{k+1,n}$ will be introduced
in Section~\ref{AlgVarS}.

\begin{example}
The simplest example of a difference equation (\ref{REq}) is the 
well-known discrete {\it Hill}
(or {\it Sturm-Liouville}) equation
$$
V_{i}=a_{i}V_{i-1}-V_{i-2}
$$
with $n$-periodic coefficients and $n$-antiperiodic solutions.
\end{example}

\subsection{$\SL_{k+1}$-frieze patterns}
An $\SL_{k+1}$-{\it frieze pattern} (see~\cite{BR}) is an infinite array of numbers such that every 
$(k+1)\times (k+1)$-subarray with adjacent rows and columns forms an element of $\SL_{k+1}$.

More precisely,
the entries of the frieze are organized in an infinite strip. 
The entries are denoted by $(d_{i,j})$,
where $i\in\Z$, and 
$$
i-k-1\leq{}j\leq{}i+w+k,
$$ 
 for a fixed $i$, and satisfy the following ``boundary conditions''
$$
\left\{
\begin{array}{rccccl}
d_{i,i-1}&=&d_{i,i+w}&=&1& \hbox{for all}\; i,\\[4pt]
d_{i,j}&=&0&&&\hbox{for}\;j<i-1\;\hbox{or}\;j>i+w,
\end{array}
\right.
$$
and the ``$\SL_{k+1}$-conditions''
\begin{equation}
\label{DEq}
D_{i,j}:=
\left\vert
\begin{array}{llll}
d_{i,j}&d_{i,j+1}&\ldots&d_{i,j+k}\\[4pt]
d_{i+1,j}&d_{i+1,j+1}&\ldots&d_{i+1,j+k}\\[4pt]
\ldots& \ldots&& \ldots\\
d_{i+k,j}&d_{i+k,j+1}&\ldots&d_{i+k,j+k}
\end{array}
\right\vert=1,
\end{equation}
for all $(i,j)$ in the index set.

An $\SL_{k+1}$-frieze pattern is represented as follows
\begin{equation}
\label{FREq}
\begin{array}{ccccccccccccc}
&&&& \vdots&&&& \vdots&&&\\
&0&&0&&0&&0&&0&&\ldots\\[6pt]
\ldots&&1&&1&&1&&1&&1&\\[6pt]
&\ldots&&\;d_{0,w-1}&&\;d_{1,w}&&\;d_{2,w+1}&&\ldots&&\ldots\\
&&&\! \iddots&& \iddots&& \iddots&&&&\\
\ldots&& d_{0,1}&&d_{1,2}&&d_{2,3}&&d_{3,4}&&d_{4,5}&\\[6pt]
& d_{0,0}&&d_{1,1}&&d_{2,2}&&d_{3,3}&&d_{4,4}&&\ldots\\[6pt]
\ldots&&1&&1&&1&&1&&1&\\[6pt]
&0&&0&&0&&0&&0&&\ldots\\
&&&& \vdots&&&& \vdots&&&\\
\end{array}
\end{equation}
where the strip is bounded by $k$ rows of 0's at the top, and at the bottom.
To simplify the pictures we often omit the bounding rows of 0's.

The parameter $w$ is called the {\it width} of the $\SL_{k+1}$-frieze pattern.
In other words, the width
is the number of non-trivial rows between  the rows of~$1$'s. 

\begin{definition}
\label{tamedef}
An $\SL_{k+1}$-frieze pattern is called {\it tame}
if every $(k+2)\times (k+2)$-determinant equals $0$.
\end{definition}

The notion of tame friezes was introduced in \cite{BR}.
Let ${\mathcal F}_{k+1,n}$ denote the space of tame $\SL_{k+1}$-frieze patterns 
of width $w=n-k-2$.

\begin{example}

(a)
The most classical example of friezes is that of 
{\it Coxeter-Conway frieze patterns} \cite{Cox,CoCo} corresponding to $k=1$.
For instance, a generic Coxeter-Conway frieze pattern of width~$2$ looks like this:
$$
 \begin{array}{ccccccccccc}
\cdots&&1&& 1&&1&&\cdots
 \\[4pt]
&x_1&&\frac{x_2+1}{x_1}&&\frac{x_1+1}{x_2}&&x_2&&
 \\[4pt]
\cdots&&x_2&&\frac{x_1+x_2+1}{x_1x_2}&&x_1&&\cdots
 \\[4pt]
&1&&1&&1&&1&&
\end{array}
$$
for some $x_1,x_2$.
(Note that we omitted the first and the last rows of $0$'s.)
This example is related to so-called Gauss {\it pentagramma mirificum}~\cite{Gau},
see also~\cite{Sche}.

(b)
The following  width $3$ Coxeter pattern is not tame:
$$
 \begin{array}{cccccccccccc}
\cdots&&1&& 1&&1&&1&&\cdots
 \\[4pt]
&1&&0&&2&&0&&3&
 \\[4pt]
\cdots&&-1&&-1&&-1&&-1&&\cdots
 \\[4pt]
&0&&1&&0&&2&&0&
 \\[4pt]
\cdots&&1&&1&&1&&1&&\cdots
\end{array}
$$
\end{example}

 \begin{remark}
Generic $\SL_{k+1}$-frieze patterns are tame.
We understand the genericity of an $\SL_{k+1}$-frieze pattern as the condition that
every $k\times k$-determinant is different from $0$.
Then the vanishing of the $(k+2)\times (k+2)$-determinants follows from
the Dodgson condensation formula, involving minors of order $k+2$, $k+1$ an $k$ obtained by erasing the first and/or last row/column.
The formula can be pictured as follows
$$
\begin{vmatrix}
*&*&*&*\\
*&*&*&*\\
*&*&*&*\\
*&*&*&*\\
\end{vmatrix}
\begin{vmatrix}
&&&\\
\;&*&*&\,\\
\;&*&*&\,\\
\;&&&\,\\
\end{vmatrix}
=
\begin{vmatrix}
\;*&*&*&\;\; \\
\;*&*&*&\;\;\\
\;*&*&*&\;\;\\
&&&\;\;\\
\end{vmatrix}
\begin{vmatrix}
\;&\;&&\\
\;&\;*&*&*\;\;\\
\;&\;*&*&*\;\;\\
\;&\;*&*&*\;\;\\
\end{vmatrix}
-
\begin{vmatrix}
&\;*&*&*\;\;\\
&\;*&*&*\;\;\\
&\;*&*&*\;\;\\
&\;&&\\
\end{vmatrix}
\begin{vmatrix}
&&&\;\\
\;*&*&*&\;\;\\
\;*&*&*&\;\;\\
\;*&*&*&\;\;\\
\end{vmatrix}.
$$
where the deleted columns/rows are left blank.
\end{remark}

\begin{notation}
\label{NotNot}
Given an $\SL_{k+1}$-frieze pattern $F$ as in \eqref{FREq},
it will be useful to define the following  $(k+1)\times{}n$-matrices\footnote{
Throughout this paper, the ``empty'' entries of matrices stand for $0$.}
\begin{equation}
\label{TheEmbM}
M^{(i)}_F:=\left(
\begin{array}{cccccccccccccc}
 1 &   d_{i,i}& \ldots   &\ldots&d_{i,w+i-1}&1 \\[4pt]
& \ddots & &  &&&\ddots\\[12pt]
&    & 1 & d_{k+i,k+i} &\ldots& \ldots&d_{k+i,k+w+i-1} &1\\[4pt]
\end{array}
\right).
\end{equation}
For a subset $I$ of $k+1$ consecutive elements of $\Z/n\Z$,
denote by $\Delta_I(M)$ the minor of a matrix $M$ based on columns with indices in $I$.
There are $n$ such intervals $I$. 
By definition of $\SL_{k+1}$-frieze pattern, $\Delta_I(M^{(i)}_F)=1$.

The matrix $M^{(i)}_F$ determines a unique tame $\SL_{k+1}$-frieze pattern.
Indeed, from $M^{(i)}_F$ one can compute all the entries $d_{i,j}$,
one after another, using the fact that $(k+2)\times (k+2)$-minors
of the frieze pattern vanish.

We will also denote the
North-East (resp. South-East) diagonals of an $\SL_{k+1}$-frieze pattern by $\mu_i$ (resp. $\eta_j$):

\begin{center}
\setlength{\unitlength}{3144sp}%
\begingroup\makeatletter\ifx\SetFigFont\undefined%
\gdef\SetFigFont#1#2#3#4#5{%
  \reset@font\fontsize{#1}{#2pt}%
  \fontfamily{#3}\fontseries{#4}\fontshape{#5}%
  \selectfont}%
\fi\endgroup%
\begin{picture}(4813,2916)(1576,-3898)
\put(3713,-2761){\makebox(0,0)[lb]{\smash{{\SetFigFont{10}{16.8}{\rmdefault}{\mddefault}{\updefault}{\color[rgb]{0,0,0}$d_{i,j}$}%
}}}}
\thinlines
{\color[rgb]{0,0,0}\multiput(3657,-2367)(5.63333,-5.63333){31}{\makebox(1.5875,11.1125){\SetFigFont{5}{6}{\rmdefault}{\mddefault}{\updefault}.}}
}%
{\color[rgb]{0,0,0}\put(3319,-2367){\line(-1,-1){1518.500}}
}%
{\color[rgb]{0,0,0}\put(3319,-3267){\line(-1,-1){618.500}}
}%
{\color[rgb]{0,0,0}\put(4219,-3267){\line(-1,-1){618.500}}
}%
{\color[rgb]{0,0,0}\multiput(4613,-2873)(-5.62500,-5.62500){21}{\makebox(1.5875,11.1125){\SetFigFont{5}{6}{\rmdefault}{\mddefault}{\updefault}.}}
}%
{\color[rgb]{0,0,0}\put(3769,-2817){\line(-1,-1){225}}
}%
{\color[rgb]{0,0,0}\put(4219,-2367){\line(-1,-1){225}}
}%
{\color[rgb]{0,0,0}\put(3713,-3323){\vector( 1,-1){563}}
}%
{\color[rgb]{0,0,0}\put(4613,-3323){\vector( 1,-1){563}}
}%
{\color[rgb]{0,0,0}\put(4613,-2423){\vector( 1,-1){1463}}
}%
{\color[rgb]{0,0,0}\put(4613,-1973){\vector( 1, 1){562.500}}
}%
{\color[rgb]{0,0,0}\put(4613,-2873){\vector( 1, 1){1462.500}}
}%
{\color[rgb]{0,0,0}\put(3713,-1973){\vector( 1, 1){562.500}}
}%
{\color[rgb]{0,0,0}\put(4276,-2086){\line(-1, 1){675}}
\put(3601,-1411){\line( 1,-1){675}}
}%
{\color[rgb]{0,0,0}\put(3376,-2086){\line(-1, 1){675}}
\put(2701,-1411){\line( 1,-1){675}}
}%
{\color[rgb]{0,0,0}\put(3376,-2986){\line(-1, 1){1575}}
\put(1801,-1411){\line( 1,-1){1575}}
}%
\put(1576,-1298){\makebox(0,0)[lb]{\smash{{\SetFigFont{10}{16.8}{\rmdefault}{\mddefault}{\updefault}{\color[rgb]{0,0,0}$\eta_{j-1}$}%
}}}}
\put(5963,-1298){\makebox(0,0)[lb]{\smash{{\SetFigFont{10}{16.8}{\rmdefault}{\mddefault}{\updefault}{\color[rgb]{0,0,0}$\mu_{i+1}$}%
}}}}
\put(5063,-1298){\makebox(0,0)[lb]{\smash{{\SetFigFont{10}{16.8}{\rmdefault}{\mddefault}{\updefault}{\color[rgb]{0,0,0}$\mu_{i}$}%
}}}}
\put(4163,-1298){\makebox(0,0)[lb]{\smash{{\SetFigFont{10}{16.8}{\rmdefault}{\mddefault}{\updefault}{\color[rgb]{0,0,0}$\mu_{i-1}$}%
}}}}
\put(3408,-1298){\makebox(0,0)[lb]{\smash{{\SetFigFont{10}{16.8}{\rmdefault}{\mddefault}{\updefault}{\color[rgb]{0,0,0}$\eta_{j+1}$}%
}}}}
\put(2588,-1298){\makebox(0,0)[lb]{\smash{{\SetFigFont{10}{16.8}{\rmdefault}{\mddefault}{\updefault}{\color[rgb]{0,0,0}$\eta_{j}$}%
}}}}
\put(4276,-3211){\makebox(0,0)[lb]{\smash{{\SetFigFont{10}{16.8}{\rmdefault}{\mddefault}{\updefault}{\color[rgb]{0,0,0}$d_{i+1,j}$}%
}}}}
\put(3326,-2311){\makebox(0,0)[lb]{\smash{{\SetFigFont{10}{16.8}{\rmdefault}{\mddefault}{\updefault}{\color[rgb]{0,0,0}$d_{i-1,j}$}%
}}}}
\put(4276,-2311){\makebox(0,0)[lb]{\smash{{\SetFigFont{10}{16.8}{\rmdefault}{\mddefault}{\updefault}{\color[rgb]{0,0,0}$d_{i,j+1}$}%
}}}}
\put(3326,-3211){\makebox(0,0)[lb]{\smash{{\SetFigFont{10}{16.8}{\rmdefault}{\mddefault}{\updefault}{\color[rgb]{0,0,0}$d_{i,j-1}$}%
}}}}
{\color[rgb]{0,0,0}\put(4051,-2761){\line( 1,-1){225}}
}%
\end{picture}%
\end{center}
\end{notation}

\subsection{Moduli space of polygons}
A {\it non-degenerate} $n$-gon is a map 
$$
v:\Z\to\RP^{k}
$$ 
such that $v_{i+n}=v_i$, for all $i$, and no $k+1$ consecutive vertices belong to
the same hyperplane.

Let ${\mathcal C}_{k+1,n}$ be the moduli space of projective equivalence classes of 
non-degenerate $n$-gons in~$\RP^{k}$.

\begin{remark}
The space ${\mathcal C}_{k+1,n}$ has been extensively studied; see, e.g., \cite{Gal,GM,Kap,EP}. 
Our interest in this space is motivated by the study of the pentagram map,
a completely integrable map on the space ${\mathcal C}_{3,n}$;
see \cite{Sch,Sch1,OST,Sol,OST1}.
\end{remark}

\section{Geometric description of the spaces ${\mathcal E}_{k+1,n}$,
${\mathcal F}_{k+1,n}$ and ${\mathcal C}_{k+1,n}$}\label{GeoS}

In this section, we describe the structures of algebraic varieties
on the spaces ${\mathcal E}_{k+1,n}$,
${\mathcal F}_{k+1,n}$ and ${\mathcal C}_{k+1,n}$.
We also prove that the spaces ${\mathcal E}_{k+1,n}$ and
${\mathcal F}_{k+1,n}$ are isomorphic.
The spaces ${\mathcal E}_{k+1,n}$ and ${\mathcal C}_{k+1,n}$ are also closely related.
We will define a map from ${\mathcal E}_{k+1,n}$ to ${\mathcal C}_{k+1,n},$
which turns out to be an isomorphism, provided $k+1$ and $n$ are coprime.
If this is not the case, then these two spaces are different.

\subsection{The structure of an algebraic variety on ${\mathcal E}_{k+1,n}$}\label{AlgVarS}

The space of difference equations~${\mathcal E}_{k+1,n}$
is an affine algebraic subvariety of $\R^{nk}$.
This structure is defined by the condition~(\ref{APeriod}).

The space of solutions of the equation~\eqref{REq} is $(k+1)$-dimensional. 
Consider $k+1$ linearly independent solutions forming 
a sequence $(V_i)_{i\in \Z}$ of vectors in $\R^{k+1}$ satisfying \eqref{REq}.
Since the coefficients are $n$-periodic,
there exists a linear map $T$ on the space of solutions 
called the {\it monodromy} satisfying:
$$
V_{i+n}=TV_i,
$$
One can view the monodromy as an element of $\GL_{k+1}$
defined up to a conjugation.

\begin{proposition}
\label{monoid}
The space ${\mathcal E}_{k+1,n}$
has codimension $k(k+2)$ in $\R^{nk}$ with coordinates $a_{i}^{j}$. 
\end{proposition}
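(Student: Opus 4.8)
The plan is to realize $\Ec_{k+1,n}$ as a single fiber of the monodromy map and then count dimensions. Writing the recurrence \eqref{REq} in vector form $\mathbf{V}_i=A_i\,\mathbf{V}_{i-1}$ with $\mathbf{V}_i=(V_i,\dots,V_{i-k})^{\top}$, each $A_i$ is the companion matrix whose first row records $a_i^1,\dots,a_i^k$ and whose last entry is the fixed scalar $(-1)^k$; expanding the determinant along the last column gives $\det A_i=1$, so $A_i\in\SL_{k+1}$ regardless of the values $a_i^j$. Hence the monodromy is a polynomial map $\Phi\colon\R^{nk}\to\SL_{k+1}$, $\Phi(a)=A_n\cdots A_1$, and the $(\text{anti})$periodicity condition \eqref{APeriod} says exactly that $\Phi(a)=(-1)^k\,\mathrm{Id}$. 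Thus $\Ec_{k+1,n}=\Phi^{-1}\big((-1)^k\,\mathrm{Id}\big)$. Since $\dim\SL_{k+1}=(k+1)^2-1=k(k+2)$, and a point of the smooth variety $\SL_{k+1}$ is locally cut out by $k(k+2)$ equations, Krull's height theorem already gives $\mathrm{codim}\,\Ec_{k+1,n}\le k(k+2)$; the proposition will follow once I show $\Phi$ is a submersion along this fiber, as then the fiber is smooth of pure codimension $k(k+2)$.

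To analyze $d\Phi$ I would right-translate to the Lie algebra. Setting $B_i:=A_n\cdots A_{i+1}$ (with $B_n=\mathrm{Id}$), one has $\partial_{a_i^j}\Phi\cdot\Phi^{-1}=\mathrm{Ad}_{B_i}\!\big(\partial_{a_i^j}A_i\cdot A_i^{-1}\big)$. The companion structure makes the inner factor completely explicit: $\partial_{a_i^j}A_i\cdot A_i^{-1}=(-1)^{j-1}E_{1,j+1}$, the elementary matrix in the first row. Consequently the image of $d\Phi$, viewed inside $\mathfrak{sl}_{k+1}$, is $\sum_{i=1}^n\mathrm{Ad}_{B_i}(\mathcal W)$, where $\mathcal W=\langle E_{1,2},\dots,E_{1,k+1}\rangle$ is a fixed $k$-dimensional subspace. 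So everything reduces to showing that these $n$ conjugates of $\mathcal W$ span $\mathfrak{sl}_{k+1}$.

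For this I would dualize with respect to the trace form: a matrix $Y$ is orthogonal to every $\mathrm{Ad}_{B_i}(\mathcal W)$ if and only if each vector $v_i:=B_i e_1$ is an eigenvector of $Y^{\top}$. Thus surjectivity of $d\Phi$ is equivalent to the statement that the only matrices having all the $v_i$ as eigenvectors are the scalars. The necessary non-degeneracy of the configuration $(v_i)$ comes for free: since each $B_i$ is invertible, the companion relations $A_ie_m=(-1)^{m-1}a_i^m e_1+e_{m+1}$ identify $\{B_ie_1,\dots,B_ie_{k+1}\}$ with $\{v_i,v_{i-1},\dots,v_{i-k}\}$ through a unitriangular change of basis, so any $k+1$ consecutive $v_i$ are linearly independent. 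Feeding this into $Y^{\top}v_i=\lambda_iv_i$ and using the cyclic closure available on the fiber, a short linear-algebra argument forces the configuration to be rigid enough that $Y$ must be scalar, yielding the surjectivity and hence codimension $k(k+2)$.

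The crux, and the step I expect to be hardest, is precisely this spanning/eigenvector assertion. Consecutive independence alone only forces the eigenvalue attached to $[v_i]$ to be invariant under the shift $i\mapsto i+k+1$; to collapse these to a single value one must invoke the genuine general position (tameness) of the $v_i$, equivalently the non-vanishing of the coefficients expressing $v_{i+k+1}$ through $v_i,\dots,v_{i+k}$. The delicate point to watch is exactly where this collapse can fail, namely at degenerate configurations and in a manner sensitive to $\gcd(k+1,n)$. The careful way to package the argument is therefore to establish surjectivity of $d\Phi$ on a dense subset of every irreducible component of the fiber; combined with the upper bound $\mathrm{codim}\,\Ec_{k+1,n}\le k(k+2)$ already noted, this pins the codimension to $k(k+2)$.
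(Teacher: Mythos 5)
Your first half is precisely the paper's proof: encode \eqref{REq} by companion matrices $A_i\in\SL_{k+1}$ (the last coefficient $(-1)^k$ forces $\det A_i=1$), observe that condition \eqref{APeriod} is the single equation $\Phi(a):=A_n\cdots A_1=(-1)^k\,\mathrm{Id}$ in $\SL_{k+1}$, and count $k(k+2)=\dim\SL_{k+1}$ polynomial relations, whence $\mathrm{codim}\,\Ec_{k+1,n}\le k(k+2)$. The paper stops at exactly this point and asserts equality without checking that the relations are independent (the same assertion recurs in Section 7.1 as ``exactly $k(k+2)$ of these equations are algebraically independent''). Your attempt to go further is a genuine refinement, and the steps you actually carry out are correct: the identity $\partial_{a_i^j}\Phi\cdot\Phi^{-1}=(-1)^{j-1}\mathrm{Ad}_{B_i}(E_{1,j+1})$, the trace-form dualization to the condition that each $v_i=B_ie_1$ be an eigenvector of $Y^{\top}$, and the linear independence of any $k+1$ cyclically consecutive $v_i$ all check out.

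The gap is that the refinement is never completed, and the uncompleted step is the one that carries the whole content of the reverse inequality. What your linear algebra actually yields is only that the eigenvalue of $Y^{\top}$ on $v_i$ is invariant under $i\mapsto i+k+1$, hence under $i\mapsto i+q$ with $q=\gcd(k+1,n)$; when $q>1$ this does not force $Y$ to be scalar, and (anti)periodic configurations with $k+1$ consecutive vectors independent admitting a non-scalar such $Y$ do exist (for $k+1=2$, $n=4$, take $v_3\parallel v_1$ and $v_4\parallel v_2$). Whether such configurations occur on the fiber, and whether an entire irreducible component of the fiber can lie inside this degenerate locus, is exactly what must be excluded; neither the announced ``short linear-algebra argument'' nor the ``surjectivity on a dense subset of every irreducible component'' packaging is supplied. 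As written, therefore, the proposal rigorously establishes only $\mathrm{codim}\le k(k+2)$ --- the same half the paper establishes --- and leaves the other half as an honestly flagged but open claim. To finish along your lines you would need, on each component, at least one point where the lines $[v_1],\dots,[v_n]$ are not simultaneously diagonalizable by a non-scalar operator; alternatively, one can sidestep transversality entirely by producing an explicit $kw$-dimensional local parametrization of $\Ec_{k+1,n}\simeq\Fc_{k+1,n}$, which would bound the dimension from above.
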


\begin{proof} 
Since the last coefficient in (\ref{REq}) is $(-1)^{k}$, one has\footnote{
Throughout the paper 
$\left|V_i,\ldots,V_{i+k}\right|$ stand for the determinant 
of the matrix with columns $V_i,\ldots,V_{i+k}$.}
$$
\left|V_i,V_{i+1},\ldots,V_{i+k}\right| = 
\left|V_{i+1},V_{i+2},\ldots,V_{i+k+1}\right|.
$$
Hence the monodromy is volume-preserving and thus belongs to
the group~$\SL_{k+1}(\R)$. 

If furthermore all the solutions of (\ref{REq})
are $n$-(anti)periodic, then the monodromy is $(-1)^{k} \mathrm{Id}$.
Since $\dim \SL_{k+1}=k(k+2)$,
this gives $k(k+2)$ polynomial relations on the coefficients. 
\end{proof}

\subsection{Embedding of ${\mathcal F}_{k+1,n}$ into the Grassmannian}\label{SubSGr}

Consider the  Grassmannian  $\Gr_{k+1,n}$ of $(k+1)$-dimensional
subspaces in~$\R^n$.
Let us show that
the space of $\SL_{k+1}$-frieze patterns~${\mathcal F}_{k+1,n}$ can be viewed
as an $(n-1)$-codimensional algebraic subvariety 
of  $\Gr_{k+1,n}$.

Recall that the the  Grassmannian can be described as the quotient
\begin{equation}
\label{Grass}
\Gr_{k+1,n}\simeq
\GL_{k+1}\backslash \Mat^*_{k+1,n}(\R),
\end{equation}
where $\Mat^*_{k+1,n}(\R)$ is the set of real $(k+1)\times{}n$-matrices
of rank $k+1$.
Every point of $\Gr_{k+1,n}$ can be represented by a $(k+1)\times{}n$-matrix $M$.
The Pl\"ucker coordinates on $\Gr_{k+1,n}$
are all the $(k+1)\times{}(k+1)$-minors of $M$, see, e.g., \cite{Ful}. 

Note that the $(k+1)\times{}(k+1)$-minors depend on the
choice of the matrix but they are defined up to a common factor.
Therefore, the Pl\"ucker coordinates are homogeneous coordinates 
independent of the choice of the
representing matrix. 
The minors $\Delta_I(M)$ with consecutive columns, see Notation~\ref{NotNot},
play a special role.

\begin{proposition}
\label{EmbProp}
The subvariety of 
$Gr_{k+1,n}$ consisting in the elements that can be represented by
the matrices such that all the minors $\Delta_I(M)$ are equal to each other:
\begin{equation}
\label{EEqu}
\Delta_I(M)=\Delta_{I'}(M)
\qquad\hbox{for all}\quad
I,I'
\end{equation}
is in one-to-one correspondence with the space $\Fc_{k+1,n}$
of $\SL_{k+1}$-frieze patterns.
\end{proposition}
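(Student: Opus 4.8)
The plan is to produce two mutually inverse maps between $\Fc_{k+1,n}$ and the subvariety of $\Gr_{k+1,n}$ cut out by \eqref{EEqu}, using the banded matrices $M^{(0)}_F$ of Notation~\ref{NotNot} as a bridge and reducing everything to a single ``canonical form'' lemma in linear algebra. Notation~\ref{NotNot} already supplies one half of what I need: a banded matrix of the shape \eqref{TheEmbM} with all its consecutive minors $\Delta_I$ equal to $1$ is the same data as a tame $\SL_{k+1}$-frieze pattern, since the entries $d_{i,j}$ sit in the band, the boundary $0$'s and $1$'s are built in, and the remaining rows of the frieze are recovered one at a time from the vanishing of the $(k+2)\times(k+2)$ minors. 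So the correspondence $F\mapsto M^{(0)}_F$ identifies $\Fc_{k+1,n}$ with the set of \emph{banded} matrices having all $\Delta_I=1$. The first step is therefore to send such an $F$ to the point of $\Gr_{k+1,n}$ represented by $M^{(0)}_F$: this lands in the subvariety because $\Delta_I(M^{(0)}_F)=1$ for every $I$, and it is a genuine point of the Grassmannian since the first $k+1$ columns of $M^{(0)}_F$ are (uni)triangular, so $M^{(0)}_F$ has full rank $k+1$.

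It then remains to prove that this assignment is a bijection onto the subvariety, which reduces to the key claim: \textbf{every point of the subvariety has a unique representative of banded shape \eqref{TheEmbM} with all $\Delta_I=1$.} Given a point satisfying \eqref{EEqu}, any representing matrix has its $n$ consecutive minors all equal to one common value; this value is a ratio of Pl\"ucker coordinates of an actual point, hence nonzero, so after rescaling the representative by a suitable element of $\GL_{k+1}$ I may assume $\Delta_I(M)=1$ for all $I$. Writing $V_1,\dots,V_n$ for the columns, this reads $|V_j,\dots,V_{j+k}|=1$ for all $j$, interpreted cyclically in $\Z/n\Z$. Assuming the claim, bringing $M$ into banded form via the appropriate $g\in\GL_{k+1}$ and invoking Notation~\ref{NotNot} produces a unique tame frieze; tameness comes for free, since every $(k+2)\times(k+2)$ determinant of the pattern is a determinant of $k+2$ vectors lying in the $(k+1)$-dimensional row space of $M$ and so vanishes, exactly as in Definition~\ref{tamedef}.

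To establish the claim I would treat uniqueness first and existence second. For \emph{uniqueness}, if $gM$ and $g'M$ are both banded then $h=g'g^{-1}$ carries one band to another; reading off the leftmost columns (which are $e_0,\ e_1+\ast\,e_0,\dots$) forces $h$ to be upper unitriangular, while reading off the rightmost columns (which are $e_k,\ e_{k-1}+\ast\,e_k,\dots$) forces $h$ to be lower unitriangular, and the two conditions squeeze $h$ to the identity. For \emph{existence}, I would first apply the unique $g$ making the initial block $[V_1|\cdots|V_{k+1}]$ upper unitriangular, and then propagate: using $|V_j,\dots,V_{j+k}|=1$ together with Cramer's rule column by column, each new column is forced into the prescribed staircase support with its boundary entry equal to $1$, so that $gM$ acquires exactly the shape \eqref{TheEmbM}. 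Finally, one checks the two maps are inverse: $\Psi\circ\Phi$ returns $F$ because $M^{(0)}_F$ is already banded and the band is canonical, while $\Phi\circ\Psi$ returns the original subspace because $gM$ and $M$ represent the same point of $\Gr_{k+1,n}$.

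The hard part will be the existence half of the canonical-form claim, and specifically the cyclic closure of the propagation. Propagating the staircase across columns $V_1,\dots,V_n$ is a routine Cramer computation once the first block is unitriangular, but the wrap-around intervals $I=\{n,1,\dots,k\},\{n-1,n,1,\dots\},\dots$ must be made to match with the correct signs; here the $(-1)^k$ built into the boundary data is exactly what makes the cyclic minors consistent, the same sign that produces the antiperiodicity \eqref{APeriod} and the monodromy $(-1)^k\,\mathrm{Id}$ in Proposition~\ref{monoid}. I would also be careful about the degenerate normalization issue: the correspondence is really with the open part of the subvariety where the common value of the $\Delta_I$ is nonzero, which is automatic for representatives coming from friezes.
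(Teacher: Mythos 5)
Your strategy is the same as the paper's: reduce everything to the claim that each point of the subvariety (with the common value of the $\Delta_I$ nonzero) has a unique representative of the banded shape \eqref{TheEmbM}, and then invoke Notation~\ref{NotNot} to identify banded matrices with all $\Delta_I=1$ with tame friezes. The paper disposes of this claim with ``it is easy to see''; your uniqueness argument is correct (squeezing $h=g'g^{-1}$ between the upper-unitriangular constraint from the first $k+1$ columns and the lower-unitriangular constraint from the last $k+1$), and your verification that the frieze side lands in the subvariety is fine.

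The gap is in the existence half of the canonical-form claim. There is no ``unique $g$ making the initial block $[V_1|\cdots|V_{k+1}]$ upper unitriangular'': such $g$ is determined only up to left multiplication by an arbitrary upper unitriangular matrix, since the above-diagonal entries of the first block are the unknown frieze entries and cannot be prescribed in advance. For a generic $g$ in this coset the propagation does \emph{not} force the banded shape: the relations $\Delta_I=1$ do produce the boundary $1$'s via Cramer, but they do not force the zeros \emph{above} the band in the last $k$ columns. Already for $k=1$, $n=5$: with $gV_1=e_1$ and $gV_2\equiv e_2 \bmod e_1$, the relations $|V_4,V_5|=|V_5,V_1|=1$ only force the second coordinate of $gV_5$ to equal $1$; its first coordinate is unconstrained and vanishes for exactly one choice of the residual unitriangular factor. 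Those upper zeros are precisely what pin down the ambiguity in $g$, so existence amounts to showing the first and last blocks can be reduced \emph{simultaneously} --- equivalently that $[V_1|\cdots|V_{k+1}]^{-1}[V_{w+2}|\cdots|V_n]$ admits a unitriangular UL factorization --- and that is the real content, which your column-by-column propagation does not supply. (The signs in the wrap-around minors, which you single out as the hard point, do work out and are not the issue.) A clean repair: extend the columns by $V_{j+n}=(-1)^kV_j$, note that any $k+2$ consecutive columns are dependent while any $k+1$ are independent, so the columns satisfy a recurrence of type \eqref{REq} whose last coefficient is forced to be $(-1)^k$ by $\Delta_I\equiv1$; then take $g$ to be the change of basis from the rows of $M$ to the $k+1$ solutions with staircase initial data $(0,\dots,0,1)$. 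This yields the banded form and the cyclic closure in one stroke. Finally, your assertion that the common value of the $\Delta_I$ is ``a ratio of Pl\"ucker coordinates, hence nonzero'' is false --- all consecutive minors of a point of $\Gr_{k+1,n}$ can vanish simultaneously --- though you correctly retreat to the open locus at the end, which is also what the paper implicitly does.
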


\begin{proof}
If $M\in\Gr_{k+1,n}$ satisfies $\Delta_I(M)=\Delta_{I'}(M)$ for all intervals $I,I'$, 
then it is easy to see that there is a unique representative of $M$ 
of the form (\ref{TheEmbM}), 
and therefore a unique corresponding $\SL_{k+1}$-frieze pattern.

Conversely, given an $\SL_{k+1}$-frieze pattern $F$, the corresponding
matrices $M_F^{(i)}$ satisfy~\eqref{EEqu}.
Fixing $i$, for instance, taking $i=1$, we obtain a well-defined embedding
$$
{\mathcal F}_{k+1,n}\subset\Gr_{k+1,n}.
$$
Hence the result.
\end{proof}

Note that the constructed embedding of the space 
${\mathcal F}_{k+1,n}$ into the Grassmannian
depends on the choice of index $i$.
A different choice leads to a different embedding.

\subsection{The space ${\mathcal C}_{k+1,n}$ as a quotient of the Grassmannian}\label{QuotGr}

A classical way to describe the space ${\mathcal C}_{k+1,n}$ 
 as the quotient of $\Gr_{k+1,n}$ by the torus action:
\begin{equation}
\label{TheQuot}
{\mathcal C}_{k+1,n}\simeq\Gr_{k+1,n}\slash\T^{n-1},
\end{equation}
is  due to Gelfand and MacPherson~\cite{GM}.

Let us comment on this realization of ${\mathcal C}_{k+1,n}$.
Given an $n$-gon $v:\Z\to\RP^k$,  consider an arbitrary lift
$V:\Z\to\R^{k+1}$.
The result of such a lift is a full rank $(k+1)\times{}n$-matrix,
and thus an element of $\Gr_{k+1,n}$.
Recall that the action of $\T^{n-1}$, in terms of the matrix realization (\ref{Grass}),
consists in multiplying $(k+1)\times{}n$-matrices by diagonal
$n\times{}n$-matrices with determinant~$1$.
The projection of $V$ to the quotient $\Gr_{k+1,n}\slash\T^{n-1}$
is independent of the lift of $v$ to $\R^{k+1}$.

\subsection{Triality}\label{1stIso}
Let us briefly explain the relations between the spaces
${\mathcal  E}_{k+1,n}$, ${\mathcal  F}_{k+1,n}$ and ${\mathcal  C}_{k+1,n}$.
We will give more details in Section~\ref{TriThmS}.

The spaces of difference equations ${\mathcal  E}_{k+1,n}$ and that of $\SL_{k+1}$-frieze patterns
${\mathcal  F}_{k+1,n}$ are always isomorphic.
These spaces are, in general, different from the moduli space of $n$-gons, but
isomorphic to it if $k+1$ and $n$ are coprime.

\begin{theorem}
\label{TriThm}
(i)
The spaces
${\mathcal  E}_{k+1,n}$ and ${\mathcal  F}_{k+1,n}$ are isomorphic
algebraic varieties.

(ii) If $k+1$ and $n$ are coprime, then 
the spaces ${\mathcal  E}_{k+1,n}$, ${\mathcal  F}_{k+1,n}$ and ${\mathcal  C}_{k+1,n}$ 
are isomorphic algebraic varieties. 
\end{theorem}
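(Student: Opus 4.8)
The plan is to place all three spaces inside the Grassmannian $\Gr_{k+1,n}$ and to recognize $\mathcal{E}_{k+1,n}$ and $\mathcal{F}_{k+1,n}$ as one and the same subvariety, namely the one already cut out in Proposition~\ref{EmbProp}. Write $\mathcal{S}\subset\Gr_{k+1,n}$ for the locus of elements admitting a representative matrix $M$ with all consecutive minors equal, $\Delta_I(M)=\Delta_{I'}(M)$; Proposition~\ref{EmbProp} identifies $\mathcal{S}$ with $\mathcal{F}_{k+1,n}$. For part~(i) I would exhibit a mutually inverse pair of regular maps $\mathcal{E}_{k+1,n}\leftrightarrow\mathcal{S}$, and then invoke Proposition~\ref{EmbProp} to conclude $\mathcal{E}_{k+1,n}\cong\mathcal{F}_{k+1,n}$.

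To send an equation to $\mathcal{S}$, pick a basis of solutions and assemble the vectors $V_1,\dots,V_n\in\R^{k+1}$ into a $(k+1)\times n$ matrix. The proof of Proposition~\ref{monoid} shows that the Wronskians $\left|V_i,\dots,V_{i+k}\right|$ are all equal, and I can normalize this common value to $1$ by an $\SL_{k+1}$-choice of basis, the residual $\GL_{k+1}$-freedom being exactly the one defining the Grassmannian. The one point needing care is that the cyclic (wrap-around) minors also equal $1$: here the antiperiodicity \eqref{APeriod} with its sign $(-1)^k$ is precisely what is required, since extending by $V_{i+n}=(-1)^kV_i$ and reordering the wrapped columns (which costs $k$ transpositions, a sign $(-1)^k$) contributes an overall factor $(-1)^k\cdot(-1)^k=1$, so the cyclic minor agrees with a linear Wronskian. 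Conversely, a matrix in $\mathcal{S}$ has any $k+2$ consecutive columns linearly dependent (they lie in $\R^{k+1}$), and expanding the vanishing $(k+2)\times(k+2)$ determinant by Cramer's rule produces a relation of the shape \eqref{REq}, with leading coefficient $1$ and trailing coefficient $(-1)^k$ forced by the equality of consecutive minors. Extending columns by $V_{i+n}=(-1)^kV_i$, the coefficients come out $n$-periodic because rescaling a whole window of vectors by $(-1)^k$ multiplies each $(k+1)\times(k+1)$ minor entering the Cramer ratios by $((-1)^k)^{k+1}=(-1)^{k(k+1)}=1$. Both maps are given by ratios of polynomials in the entries, hence regular, and they are mutually inverse, proving~(i).

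For part~(ii) I would show that, under coprimality, the inclusion $\mathcal{S}\hookrightarrow\Gr_{k+1,n}$ followed by the projection $\Gr_{k+1,n}\to\Gr_{k+1,n}/\T^{n-1}\simeq\mathcal{C}_{k+1,n}$ of \eqref{TheQuot} is an isomorphism; composing with~(i) then links all three spaces. The torus $\T^{n-1}$ rescales columns by $(t_1,\dots,t_n)$ with $\prod_i t_i=1$, and a consecutive minor $\Delta_{\{i,\dots,i+k\}}$ is multiplied by the window product $\prod_{j=i}^{i+k}t_j$. Thus the statement ``each orbit meets $\mathcal{S}$ in exactly one point'' becomes a question about the $n\times n$ circulant integer matrix $C$ whose rows are the indicator vectors of the $(k+1)$-element windows. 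Its eigenvalues are $\sum_{j=0}^{k}\omega^j=(\omega^{k+1}-1)/(\omega-1)$ over the $n$-th roots of unity $\omega$, and these vanish exactly when $\omega\neq 1$ is a common root of $X^{k+1}=1$ and $X^n=1$, i.e.\ exactly when $\gcd(k+1,n)>1$. Hence for $\gcd(k+1,n)=1$ the matrix $C$ is nondegenerate: surjectivity onto orbits furnishes the rescaling equalizing all minors (solvable on the nondegenerate locus where each $\Delta_i\neq 0$, since $C$ is invertible over $\R$), while injectivity follows because a rescaling preserving all consecutive minors forces $t_{i+k+1}=t_i$, hence $t$ constant (the subgroup of $\Z/n\Z$ generated by $k+1$ is everything), hence trivial.

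The main obstacle I anticipate lies in part~(ii): making the ``one orbit point'' claim rigorous over $\R$ rather than $\C$, reconciling the determinant-one constraint $\prod_i t_i=1$ with the window-product system governed by $C$, and controlling the sign ambiguities from lifting an $n$-gon in $\RP^k$ to $\R^{k+1}$ one vertex at a time (in particular ruling out the stray solution $t_i\equiv -1$, which is excluded precisely by coprimality). By contrast, part~(i) is essentially a bookkeeping verification once the normalization of the Wronskian to $1$ and the $(-1)^k$ sign in \eqref{APeriod} are correctly matched.
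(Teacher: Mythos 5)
Your proposal follows essentially the same route as the paper: part (i) is the paper's identification of $\mathcal{E}_{k+1,n}$ with the equal-consecutive-minors locus in $\Gr_{k+1,n}$ (constant Wronskian forced by the trailing coefficient $(-1)^k$, recovery of the equation from the linear dependence of $k+2$ consecutive columns, i.e.\ tameness), and part (ii) is the paper's reduction to unique solvability of $t_i\cdots t_{i+k}\,|\tilde V_i,\ldots,\tilde V_{i+k}|=1$ governed by the circulant matrix with eigenvalues $(\omega^{k+1}-1)/(\omega-1)$. Two small remarks: your wrap-around sign count ($k$ transpositions, one factor $(-1)^k$) only treats a single wrapped column --- for $j$ wrapped columns the exponent is $j(k+1-j)+kj$, which is still even, so the conclusion stands --- and the real/sign subtleties you flag in part (ii) are left equally informal in the paper, your proposed fixes (separating moduli from signs, and using $\prod_i t_i=1$ with $n$ odd to exclude $t\equiv -1$) being the correct ones.
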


The complete proof of this theorem will be given in Section~\ref{TriThmS}.
Here we just construct the isomorphisms.

\medskip

{\it Part (i)}.
Let us define a map 
$$
{\mathcal  E}_{k+1,n}
\buildrel{\simeq}\over\longrightarrow 
{\mathcal  F}_{k+1,n}
$$
which identifies the two spaces.
Roughly speaking, we generate the solutions of the recurrence 
equation (\ref{REq}), starting with $k$ zeros, 
followed by one, and put them on the North-East diagonals of the frieze pattern.
Let us give a more detailed construction.

Given a difference equation \eqref{REq}
satisfying the (anti)periodicity assumption \eqref{APeriod},
we define the corresponding $\SL_{k+1}$-frieze pattern
(\ref{FREq}) by constructing its North-East diagonals $\mu_i$.
This diagonal is given by a sequence of real numbers 
$V=(V_s)_{s\in \Z}$ that are the solution of the equation \eqref{REq} 
with the initial condition
$$
(V_{i-k-1},V_{i-k},\ldots,V_{i-1})=(0,0,\ldots,0,1);
$$ 
this
defines the numbers $d_{i,j}$ via
\begin{equation}
\label{ConstEq}
d_{i,j}:=V_j.
\end{equation}
Since the solution is $n$-(anti)periodic, we have
$$
d_{i,i+w+1}=\ldots=d_{i,i+n-2}=0,
\qquad
d_{i,i+n-1}=(-1)^k.
$$
Furthermore, from the equation (\ref{REq}) one has
$$
d_{i,i+w}=1.
$$
The sequence of (infinite) vectors $(\eta_i)$, i.e., of South-East diagonals,
satisfies Equation~(\ref{REq}).

\begin{example}
\label{HillEx}
{\rm
For an arbitrary difference equation,
the first coefficients of the $i$-th diagonal are
$$
\begin{array}{rcl}
d_{i,i}&=&a_{i}^1, \\[4pt] 
d_{i,i+1}&=&a_{i}^1a_{i+1}^1-a_{i+1}^2, \\[4pt]
d_{i,i+2}&=&a_{i}^1a_{i+1}^1a_{i+2}^1-a_{i+1}^2a_{i+2}^1- a_{i}^1a_{i+2}^2+a_{i+2}^3.
\end{array}
$$
We will give more general determinant formulas in Section~\ref{DeTS}.
}
\end{example}

\begin{remark}
The idea that the diagonals of a frieze pattern satisfy a difference equation
goes back to Conway and Coxeter~\cite{CoCo}.
The isomorphism between the spaces of difference
equations and frieze patterns was used 
in \cite{OST,MGOT} in the cases $k=1,2$.
\end{remark}

{\it Parts (ii)}.
The construction of the  map 
${\mathcal E}_{k+1,n}\to{\mathcal C}_{k+1,n}$ consists in the following two steps:
\begin{enumerate}
\item
the spaces ${\mathcal E}_{k+1,n}$ and ${\mathcal F}_{k+1,n}$ are isomorphic;
\item
there is an embedding ${\mathcal F}_{k+1,n}\subset\Gr_{k+1,n}$,  
and a projection
$\Gr_{k+1,n}\to{\mathcal C}_{k+1,n}$, see~(\ref{TheEmbM}) and~(\ref{TheQuot}).
\end{enumerate}

More directly, given a difference equation (\ref{REq}) with (anti)periodic solutions,
the space of solutions being $k+1$-dimensional, we
choose any linearly independent solutions
$(V_i^{(1)}),\ldots,(V_i^{(k+1)})$.
For every $i$, we obtain a point, $V_i\in \R^{k+1}$, which we project to $\RP^k$;
the (anti)periodicity assumption implies that we obtain an $n$-gon.
Furthermore, the constructed $n$-gon is non-degenerate since the 
$(k+1)\times (k+1)$-determinant
\begin{equation}
\label{detconst}
\left|
V_i, V_{i+1},\ldots,V_{i+k}
\right|=\mathrm{Const}\not=0.
\end{equation}
A different choice of  solutions leads to a projectively equivalent $n$-gon.
We have constructed a map
\begin{equation}
\label{MaPP}
{\mathcal E}_{k+1,n} \longrightarrow {\mathcal C}_{k+1,n}.
\end{equation}

We will see in Section~\ref{CoPSeC}, 
that this map is an isomorphism if and only if $k+1$ and $n$ are coprime.

\begin{proposition}
\label{NewProp}
Suppose that $\gcd(n,k+1)=q \neq 1$,
then the image of the constructed map has codimension $q-1$.
\end{proposition}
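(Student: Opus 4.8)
The plan is to analyze the constructed map \eqref{MaPP} through its factorization $\Ec_{k+1,n}\cong\Fc_{k+1,n}\hookrightarrow\Gr_{k+1,n}\xrightarrow{\ \pi\ }\Cc_{k+1,n}$, using the isomorphism of Theorem \ref{TriThm}(i), the embedding of Proposition \ref{EmbProp}, and the quotient presentation \eqref{TheQuot}. The image is then exactly $\pi(\Fc_{k+1,n})$, i.e. the set of $n$-gons admitting a lift $V:\Z\to\R^{k+1}$ all of whose consecutive determinants $\Delta_i:=\left|V_i,\ldots,V_{i+k}\right|$ are equal. Since $\dim\Ec_{k+1,n}=nk-k(k+2)=k(n-k-2)$ by Proposition \ref{monoid}, while $\dim\Cc_{k+1,n}=(k+1)(n-k-1)-(n-1)=k(n-k-2)$ by \eqref{TheQuot}, the two source and target spaces have the same dimension. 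Hence it suffices to show that the generic fibre of $\pi|_{\Fc_{k+1,n}}$ has dimension $q-1$; then $\dim\pi(\Fc_{k+1,n})=k(n-k-2)-(q-1)$, which yields codimension $q-1$.

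First I would identify the fibre as an orbit intersection. Because $\Cc_{k+1,n}$ is the quotient of $\Gr_{k+1,n}$ by the column-rescaling torus, the fibre of $\pi|_{\Fc_{k+1,n}}$ through a frieze point $[M]\in\Fc_{k+1,n}$ is the intersection of the torus orbit of $[M]$ with $\Fc_{k+1,n}$. Rescaling the columns of $M$ by $t=(t_1,\ldots,t_n)\in(\R^*)^n$ multiplies the minor $\Delta_i$ by the window product $P_i(t):=t_it_{i+1}\cdots t_{i+k}$. Since the condition cutting out $\Fc_{k+1,n}\subset\Gr_{k+1,n}$ is precisely $\Delta_I(M)=\Delta_{I'}(M)$ (Proposition \ref{EmbProp}), the rescaled point lies again in $\Fc_{k+1,n}$ if and only if $P_i(t)$ is independent of $i$. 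Thus the fibre is the image in $\Cc_{k+1,n}$ of the subgroup
\[
S:=\{\,t\in(\R^*)^n : P_i(t)\ \text{is independent of}\ i\,\}.
\]

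Next I would compute $\dim S$ by passing to logarithms $t_i=e^{s_i}$. The condition $P_{i+1}(t)=P_i(t)$ becomes $s_{i+k+1}=s_i$ for all $i\in\Z/n\Z$, so $s$ must be invariant under translation by $k+1$, i.e. constant on each orbit of the cyclic subgroup $\langle k+1\rangle\subset\Z/n\Z$. This subgroup has order $n/q$, hence exactly $q$ orbits, giving $\dim S=q$. The one-dimensional subgroup of overall scalings $t=(\lambda,\ldots,\lambda)$ lies in $S$ but acts trivially on $\Gr_{k+1,n}$ (it is absorbed into the $\GL_{k+1}$ action), so for a generic non-degenerate polygon, whose column-rescaling stabilizer in $(\R^*)^n$ is exactly these scalars, the fibre has dimension $\dim S-1=q-1$.

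The main obstacle is this final dimension count: one must justify that for generic $[M]\in\Fc_{k+1,n}$ the torus stabilizer reduces to the scalars, so that $S$ really injects modulo scalars as a $(q-1)$-dimensional piece of the fibre rather than collapsing further; this is exactly where non-degeneracy of the $n$-gon is used. One should also confirm that the generic fibres of $\pi|_{\Fc_{k+1,n}}$ are equidimensional, so that the image has codimension precisely $q-1$ and not merely at most $q-1$. This is immediate once one notes that the subgroup $S$ is the same for every $[M]$ and acts freely modulo scalars on a generic orbit.
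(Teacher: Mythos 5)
Your proof is correct and follows essentially the same route as the paper: both identify the fibre of the projection to $\Cc_{k+1,n}$ with the group of column rescalings preserving the equality of the consecutive $(k+1)\times(k+1)$-minors, and both reduce the codimension count to the dimension of that group, governed by $q=\gcd(n,k+1)$. The only difference is computational: you find this dimension by counting orbits of translation by $k+1$ on $\Z/n\Z$, whereas the paper passes to logarithms and computes the corank of the resulting circulant matrix via its eigenvalues $1+\omega_j+\cdots+\omega_j^k$; both give $q-1$.
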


We will give the proof in Section~\ref{ProProSec}.

\begin{notation}
It will be convenient to write the $\SL_{k+1}$-frieze pattern
associated with a difference equation (\ref{REq}) in the form:
\begin{equation}
\label{friezeGG}
\begin{array}{ccccccccccccccccccccccccccc}
&\ldots&1&&1&&1&&1&&1&&1\\[4pt]
&&&\ldots&&\alpha^{1}_{n}&&\alpha^{1}_1&&\alpha^{1}_2&& \ldots&&\alpha^{1}_n&\\[4pt]
&&&&\alpha^{2}_n&&\alpha^{2}_1&& \alpha^{2}_2&&&&\alpha^{2}_n&\\
&\ldots&& \iddots && \iddots&& \iddots&&&& \iddots&&\ldots\\
&&\alpha^{w}_{n}&& \alpha^{w}_{1}&&\alpha^{w}_{2}&& \ldots&& \alpha^{w}_{n}&&\ldots\\[4pt]
&1&&1&&1&&1&&1&&1&&\ldots\\
\end{array}
\end{equation}
The relation between the old and new notation for the entries of the $\SL_{k+1}$-frieze pattern 
of width $w$ is:
\begin{equation}
\label{Corresp}
d_{i,j}=\alpha_{i-1}^{w-j+i},\qquad
\alpha_{i}^{j}=d_{i+1,w+i-j+1},
\end{equation}
where $d_{i,j}$ is the general notation 
for the entries of an $\SL_{k+1}$-frieze pattern.
See formula~(\ref{FREq}).
\end{notation}

\section{The combinatorial Gale transform}\label{TheGaleS}

In this section, we present another isomorphism between
the introduced spaces. 
This is a combinatorial analog of the classical Gale transform, and
it  results from the natural isomorphism of Grassmannians:
$$
\Gr_{k+1,n}\simeq\Gr_{w+1,n},
$$
for $n=k+w+2$.

On the spaces $\Ec_{k+1,n}$ and $\Fc_{k+1,n}$ we also define an involution,
which is a combinatorial version of the projective duality.
The Gale transform commutes with the projective duality
so that both maps define an action of the Klein group 
$\left(\Z/2\Z\right)^2$.

\subsection{Statement of the result}\label{StatS}

\begin{definition}
We say that a difference equation~(\ref{REq}) with $n$-(anti)periodic solutions 
is {\it Gale dual} of
the following difference equation of order $w+1$:
\begin{equation}
\label{TheDualEq}
W_i=\alpha_{i}^1W_{i-1}-\alpha_{i}^2W_{i-2}+ 
\cdots+(-1)^{w-1}\alpha_{i}^wW_{i-w}+(-1)^{w}W_{i-w-1},
\end{equation}
where $\alpha_r^s$ are the entries of the $\SL_{k+1}$-frieze pattern~(\ref{friezeGG})
corresponding to~(\ref{REq}).
\end{definition}

The following statement is the main result of the paper.

\begin{theorem}
\label{TheGailThm}
(i)
All  solutions of the equation~(\ref{TheDualEq}) are $n$-(anti)periodic,
i.e., 
$$
W_{i+n}=(-1)^wW_i.
$$

(ii)
The defined map  ${\mathcal E}_{k+1,n}\to{\mathcal E}_{w+1,n}$ is an involution.
\end{theorem}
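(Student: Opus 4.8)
The plan is to deduce both statements from Gale duality on the Grassmannian, using the embedding $\Fc_{k+1,n}\subset\Gr_{k+1,n}$ of Proposition~\ref{EmbProp} together with the equation--frieze isomorphism of Theorem~\ref{TriThm}(i), applied once in each direction under the exchange $k\leftrightarrow w$.

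First I would encode equation~(\ref{REq}) as its frieze $F\in\Fc_{k+1,n}$ and embed it via $M:=M_F^{(i)}\in\Gr_{k+1,n}$, so that all consecutive minors satisfy $\Delta_I(M)=1$ (Notation~\ref{NotNot}). Since $(k+1)+(w+1)=n$, the Gale transform $\Gr_{k+1,n}\simeq\Gr_{w+1,n}$ sends the row space of $M$ to its annihilator, represented by a $(w+1)\times n$ matrix $N$ characterized by $MN^{T}=0$. The crucial feature of the cyclic setting is that the complement of a cyclic interval $I$ of $k+1$ consecutive indices is again a cyclic interval $I^{c}$ of $w+1$ consecutive indices. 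The standard Pl\"ucker complementarity $\Delta_I(M)=\pm\,\Delta_{I^{c}}(N)$ therefore transports the ``all consecutive minors equal'' condition for $M$ to the same condition for $N$, once the signs are absorbed by the alternating column rescaling $\mathrm{diag}((-1)^{j})$, which is a torus element and does not change the Grassmannian point. By Proposition~\ref{EmbProp}, the rescaled matrix represents a tame $\SL_{w+1}$-frieze pattern $G$ of width $n-w-2=k$, that is, an element of $\Fc_{w+1,n}$.

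Next I would invoke Theorem~\ref{TriThm}(i) with the roles of $k$ and $w$ exchanged: the frieze $G\in\Fc_{w+1,n}$ is the frieze of a unique order-$(w+1)$ difference equation all of whose solutions are $n$-(anti)periodic, with $W_{i+n}=(-1)^{w}W_i$. It then remains to identify this equation with~(\ref{TheDualEq}): reading off the leading frieze entries of $G$ through the correspondence~(\ref{Corresp}) shows that its coefficients are exactly $\alpha_i^1,\dots,\alpha_i^w$, which is precisely~(\ref{TheDualEq}). This yields (i). For (ii), the map $\Ec_{k+1,n}\to\Ec_{w+1,n}$ is the composite (equation $\to$ frieze $\to$ Gale-dual frieze $\to$ equation). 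Because the Gale transform is an involution on Grassmannians --- the double annihilator $(S^{\perp})^{\perp}=S$ recovers the original subspace --- and the equation--frieze isomorphisms are bijections, applying the construction twice returns the original point, provided the normalization conventions are chosen symmetrically in $k\leftrightarrow w$ (the alternating twist applied twice is trivial). Hence the composite is the identity, and the transform is an involution.

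The hard part will be the sign bookkeeping: verifying that the Pl\"ucker signs for consecutive complementary intervals are absorbed coherently by a \emph{single} diagonal rescaling, so that $N$ genuinely represents a frieze (all consecutive minors equal, not merely equal up to varying signs), and tracking how the antiperiodicity sign $(-1)^{w}$ --- rather than $(-1)^{k}$ --- emerges from the wrap-around $+1$'s of the banded matrix $M_F^{(i)}$ under complementation of intervals. The explicit coefficient identification in the last step of (i), via~(\ref{Corresp}), is routine but must be carried out to match~(\ref{TheDualEq}) exactly; and for (ii) one must confirm that these same conventions render the double transform the identity on the nose, rather than the identity up to a global sign.
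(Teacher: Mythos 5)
Your route is genuinely different from the paper's. The paper never passes through Pl\"ucker complementarity: it writes down an explicit candidate basis of $n$-(anti)periodic solutions of~(\ref{TheDualEq}) --- the $w+1$ cyclic shifts of the row $(0,\ldots,0,1,a_n^{k},\ldots,a_n^{1},1)$ built from the coefficients of~(\ref{REq}) --- and verifies directly, component by component against the vector recurrence satisfied by the South-East diagonals $\eta_j$ of $F$, that each of these rows solves~(\ref{TheDualEq}) (Lemma~\ref{SoLem}); the involution is then the one-line observation that solutions of each equation are coefficients of the other. Your Grassmannian argument, by contrast, gets (anti)periodicity for free from Theorem~\ref{TriThm}(i) applied with $k\leftrightarrow w$, at the price of the duality bookkeeping. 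The part you are most worried about is actually fine: for complementary subsets the combined prefactor $(-1)^{\sum_{m\in I^c}m}\,\epsilon(I)^{-1}$ equals $(-1)^{\binom{n+1}{2}+\binom{k+2}{2}}$, which is independent of $I$ (wrap-around intervals included), so a single alternating twist plus one global normalization does make all consecutive minors of the annihilator equal. This is consistent with Proposition~\ref{FrGale}, where the twist is exactly $D=\mathrm{diag}(1,-1,\ldots)$.

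The genuine soft spot is the identification step, which you dismiss as ``routine \ldots via~(\ref{Corresp})''. Formula~(\ref{Corresp}) only relabels the entries of a single frieze; it does not convert the \emph{entries} of the dual frieze $G$ into the \emph{coefficients} of the equation $E(G)$ attached to $G$ --- those coefficients are recovered from $G$ by Cramer's rule on $(w+2)\times(w+2)$ minors (the tameness recurrence), i.e.\ by the determinantal formulas of Section~\ref{DeTS}, which the paper itself derives \emph{from} the Gale transform. So as written, your step (c) either begs the question or proves the wrong thing: you would obtain \emph{some} element of $\Ec_{w+1,n}$, not visibly equation~(\ref{TheDualEq}), whose coefficients are the entries $\alpha_i^j$ of $F$, not of $G$. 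The fix stays inside your framework: observe that each row of $M_F^{(i)}$ reads $(1,\alpha^w_{\bullet},\ldots,\alpha^1_{\bullet},1)$ in consecutive columns, which after the alternating twist is precisely a coefficient row of~(\ref{TheDualEq}); hence the row space of $M_F^{(i)}D$ is the space of linear relations defining~(\ref{TheDualEq}), and the double-annihilator identity identifies $E(G)$ with~(\ref{TheDualEq}) on the nose. (You also need the companion observation that the annihilator of $M_F^{(i)}D$ is spanned by the coefficient rows of~(\ref{REq}), since the columns of $M_F^{(i)}$ are segments of the diagonals $\eta_j$, which satisfy~(\ref{REq}); this is what pins down $G$ as the frieze~(\ref{friezeG}) and makes part (ii) an honest involution rather than an involution up to relabelling.) With that inserted, your argument closes and is a legitimate alternative to the paper's.
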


We obtain an isomorphism
\begin{equation}
\label{TheGale}
\Gc:{\mathcal E}_{k+1,n} 
\buildrel{\simeq}\over\longrightarrow{\mathcal E}_{w+1,n}
\end{equation}
between the spaces of $n$-(anti)periodic difference equations of orders $k+1$ 
and $w+1$, provided 
$$
n=k+w+2.
$$ 
We call this isomorphism the \textit{combinatorial Gale duality} 
(or the \textit{combinatorial Gale transform}).

An equivalent way to formulate the above theorem
is to say that there is a duality between 
$\SL_{k+1}$-frieze patterns of width $w$
and $\SL_{w+1}$-frieze patterns of width $k$:
$$
\Gc:{\mathcal F}_{k+1,n} 
\buildrel{\simeq}\over\longrightarrow{\mathcal F}_{w+1,n}.
$$

\begin{proposition}
\label{GDFprop}
The $\SL_{w+1}$-frieze pattern associated
to the equation~(\ref{TheDualEq}) is the following
\begin{equation}
\label{friezeG}
\begin{array}{ccccccccccccccccccccccccccc}
&\ldots&1&&1&&1&&1&&1&&1\\[4pt]
&&&\ldots&& a^1_n&&a^1_1&&a^1_2&& \ldots&& a^1_n&\\[4pt]
&&&& a^2_n&&a^2_1&& a^2_2&&&&a^2_n&\\
&\ldots&& \iddots && \iddots&& \iddots&&&& \iddots&&\ldots\\
&&a^k_n&& a^k_1&&a^k_2&& \ldots&& a^k_n&&\ldots\\[4pt]
&1&&1&&1&&1&&1&&1&&\ldots\\
\end{array}
\end{equation}
\end{proposition}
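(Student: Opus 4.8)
The plan is to identify, inside $\R^n$, the solution space of the dual equation~\eqref{TheDualEq} and to read off from it the associated $\SL_{w+1}$-frieze, showing that it coincides with~\eqref{friezeG}. First I would reduce the statement to a computation of North-East diagonals. By the construction of Section~\ref{1stIso} together with Proposition~\ref{EmbProp}, an $\SL_{w+1}$-frieze is uniquely determined by its North-East diagonals, which are precisely the solutions of the associated equation with initial data $(0,\dots,0,1)$; equivalently, it is determined by the matrix $M^{(i)}$ of the form~\eqref{TheEmbM}, whose rows span the solution space of that equation inside $\R^n$ (using the (anti)periodicity~\eqref{APeriod} to regard a solution as a vector in $\R^n$). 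Thus it suffices to exhibit the solution space $U'\subset\R^n$ of~\eqref{TheDualEq} and to check that the diagonals it produces carry the entries $a^j_i$.

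Next I would set up a duality between the two solution spaces. For the original equation~\eqref{REq}, the solution space $U\subset\R^n$ is the span of the North-East diagonals $\mu_i$ of the frieze~\eqref{friezeGG}, i.e. the row space of $M_F^{(i)}$. Rewriting~\eqref{REq} as a family of linear relations, for each $i$ one obtains a banded covector $\rho_i\in(\R^n)^*$, supported on the $k+2$ consecutive positions $i-k-1,\dots,i$, with entries $(-1)^{k+1},(-1)^k a^k_i,\dots,-a^1_i,1$; then $U=\bigcap_i\ker\rho_i$ and, since $\dim U=k+1$, by dimension count $U^\perp=\mathrm{span}\{\rho_i\}$. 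The crux is the following observation: after the diagonal sign-twist $\Sigma=\mathrm{diag}\big((-1)^i\big)$ on $\R^n$, the covector $\Sigma\rho_i$ equals $(-1)^i$ times a banded vector with entries $1,a^k_i,\dots,a^1_i,1$, which is exactly a North-East diagonal of the array~\eqref{friezeG}. Symmetrically, the relation covectors $\rho'_i$ of the dual equation~\eqref{TheDualEq} satisfy $\Sigma\rho'_i=(-1)^i\mu^F_i$, the North-East diagonals of $F$. Since $\Sigma$ is orthogonal and involutive, these two facts give $\mathrm{span}\{\rho'_i\}=\Sigma(U)$ and hence, for the solution space $U'$ of~\eqref{TheDualEq} (which exists by Theorem~\ref{TheGailThm}(i)),
\[
U'=\big(\mathrm{span}\{\rho'_i\}\big)^\perp=(\Sigma U)^\perp=\Sigma\big(U^\perp\big)=\mathrm{span}\{\Sigma\rho_i\},
\]
so the North-East diagonals of~\eqref{friezeG} span exactly $U'$.

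Finally I would pin down the individual diagonals, not merely their span. A nonzero solution of the order-$(w+1)$ equation~\eqref{TheDualEq} that is supported on only $k+2=n-w$ consecutive positions is unique up to a scalar (it then vanishes on $w$ consecutive positions, which together with the recurrence forces a one-dimensional family). The vector $\Sigma\rho_i$ is such a solution, and matching its leading boundary entry against the normalization $d_{t,t-1}=1$ of a frieze diagonal fixes the scalar to be $(-1)^i$; the resulting interior entries then collapse, by the sign computation above, to $a^k_s,\dots,a^1_s$. Translating through the dictionary~\eqref{Corresp} between the $d_{i,j}$ and the $\alpha$-style labelling, this says precisely that the $\SL_{w+1}$-frieze associated with~\eqref{TheDualEq} is~\eqref{friezeG}. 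The main obstacle is purely bookkeeping: keeping the alternating signs of the recurrence, the twist $\Sigma$, the global factors $(-1)^i$, and the two index conventions mutually consistent, and checking that the orientation of the diagonals matches. I would also remark that the same argument simultaneously establishes the involutivity in Theorem~\ref{TheGailThm}(ii), since it shows the Gale transform applied to~\eqref{TheDualEq} returns the coefficients $a^j_i$ of~\eqref{REq}.
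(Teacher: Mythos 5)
Your argument is correct in substance but packages the proof differently from the paper. The paper's own proof simply points back at Lemma~\ref{SoLem}: there the banded sequences $(\ldots,0,1,a_i^k,\ldots,a_i^1,1,0,\ldots)$ are verified \emph{directly} to solve~\eqref{TheDualEq}, by writing the single recurrence $\eta_n=a_n^1\eta_{n-1}-\cdots+(-1)^k\eta_{n-k-1}$ satisfied by the South-East diagonals of $F$ componentwise; since these sequences are exactly the solutions with initial data $(0,\ldots,0,1)$, they are by construction the North-East diagonals of the frieze associated with~\eqref{TheDualEq}, which is therefore~\eqref{friezeG}. You instead dualize: you realize both solution spaces as common kernels of banded relation covectors, observe that the sign twist $\Sigma$ converts the relation covectors of each equation into diagonals of the other frieze, and conclude by taking orthogonal complements plus a uniqueness argument for banded solutions. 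This is essentially the content of the paper's Proposition~\ref{FrGale} (the identity $M^{(i)}_F\,D\,{M^{(j)}_{\Gc(F)}}^T=0$), run in reverse to deduce the present proposition; what your route buys is a conceptual explanation of the sign matrix $D$ and, as you note, a simultaneous proof of involutivity, at the price of the dimension counts and the uniqueness step.

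One item deserves more than the label ``bookkeeping''. The identity $\Sigma\rho'_i=(-1)^i\mu^F_i$ is false as written: $\Sigma\rho'_i$ carries the entries $(1,\alpha_i^w,\ldots,\alpha_i^1,1)$ on the positions $i-w-1,\ldots,i$, whereas the North-East diagonal of $F$ carrying those same entries sits on the positions $i,\ldots,i+w+1$. The two windows differ by a cyclic shift of $w+1$, which is precisely the offset $i-j=w+1\bmod n$ in Proposition~\ref{FrGale}. Without re-indexing one of the two equations by this shift, $\Sigma\rho'_i$ does not lie in $U$ at all: the element of $U$ vanishing on the complementary $k$ consecutive positions is $\mu_{i+k+2}$, whose entries involve $\alpha_{i+k+1}^s$ rather than $\alpha_i^s$, so the asserted equality $\mathrm{span}\{\rho'_i\}=\Sigma(U)$ breaks. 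The repair is a consistent relabelling of the dual time variable, together with the $(-1)^k$, resp.\ $(-1)^w$, signs that a covector picks up when its support wraps around the period; once that alignment is imposed the chain of equalities and the final identification of the diagonals go through, and your sketch becomes a complete proof.
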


We say that the $\SL_{w+1}$-frieze pattern~(\ref{friezeG})
is {\it Gale dual} to the $\SL_{k+1}$-frieze pattern~(\ref{friezeGG}).

The combinatorial Gale transform is illustrated by Figure~\ref{Illustre}.

 \begin{center}
 \begin{figure}
 \setlength{\unitlength}{3144sp}%
\begingroup\makeatletter\ifx\SetFigFont\undefined%
\gdef\SetFigFont#1#2#3#4#5{%
  \reset@font\fontsize{#1}{#2pt}%
  \fontfamily{#3}\fontseries{#4}\fontshape{#5}%
  \selectfont}%
\fi\endgroup%
\begin{picture}(6402,3817)(2251,-4111)
\put(5401,-511){\makebox(0,0)[lb]{\smash{{\SetFigFont{11}{13.2}{\rmdefault}{\mddefault}{\updefault}{\color[rgb]{0,0,0}$\Ec_{k+1,n}$}%
}}}}
\thinlines
{\color[rgb]{0,0,0}\put(6301,-961){\vector(-3, 1){  0}}
\put(6301,-961){\vector( 3,-1){1350}}
}%
{\color[rgb]{0,0,0}\put(3601,-3011){\vector(-3, 1){  0}}
\put(3601,-3011){\vector( 3,-1){1350}}
}%
{\color[rgb]{0,0,0}\put(4951,-961){\vector( 3, 1){  0}}
\put(4951,-961){\vector(-3,-1){1350}}
}%
\put(5000,-4011){\makebox(0,0)[lb]{\smash{{\SetFigFont{8}{13.2}{\rmdefault}{\mddefault}{\updefault}{\color[rgb]{0,0,0}$W_i=\alpha_i^1W_{i-1}-\alpha_i^2W_{i-2}+\ldots$}%
}}}}
\put(5401,-3661){\makebox(0,0)[lb]{\smash{{\SetFigFont{11}{13.2}{\rmdefault}{\mddefault}{\updefault}{\color[rgb]{0,0,0}$\Ec_{w+1,n}$}%
}}}}
\put(8426,-2311){\makebox(0,0)[lb]{\smash{{\SetFigFont{9}{13.2}{\rmdefault}{\mddefault}{\updefault}{\color[rgb]{0,0,0}$\cdots$}%
}}}}
\put(7101,-2311){\makebox(0,0)[lb]{\smash{{\SetFigFont{9}{13.2}{\rmdefault}{\mddefault}{\updefault}{\color[rgb]{0,0,0}$\cdots$}%
}}}}
\put(3701,-2311){\makebox(0,0)[lb]{\smash{{\SetFigFont{9}{13.2}{\rmdefault}{\mddefault}{\updefault}{\color[rgb]{0,0,0}$\cdots$}%
}}}}
\put(2251,-2311){\makebox(0,0)[lb]{\smash{{\SetFigFont{9}{13.2}{\rmdefault}{\mddefault}{\updefault}{\color[rgb]{0,0,0}$\cdots$}%
}}}}
\put(8551,-2761){\makebox(0,0)[lb]{\smash{{\SetFigFont{9}{13.2}{\rmdefault}{\mddefault}{\updefault}{\color[rgb]{0,0,0}1}%
}}}}
\put(8101,-2761){\makebox(0,0)[lb]{\smash{{\SetFigFont{9}{13.2}{\rmdefault}{\mddefault}{\updefault}{\color[rgb]{0,0,0}1}%
}}}}
\put(7651,-2761){\makebox(0,0)[lb]{\smash{{\SetFigFont{9}{13.2}{\rmdefault}{\mddefault}{\updefault}{\color[rgb]{0,0,0}1}%
}}}}
\put(7201,-2761){\makebox(0,0)[lb]{\smash{{\SetFigFont{9}{13.2}{\rmdefault}{\mddefault}{\updefault}{\color[rgb]{0,0,0}1}%
}}}}
\put(7426,-2536){\makebox(0,0)[lb]{\smash{{\SetFigFont{9}{13.2}{\rmdefault}{\mddefault}{\updefault}{\color[rgb]{0,0,0}$a_i^k$}%
}}}}
\put(7651,-2311){\makebox(0,0)[lb]{\smash{{\SetFigFont{9}{13.2}{\rmdefault}{\mddefault}{\updefault}{\color[rgb]{0,0,0}$\iddots$}%
}}}}
\put(7876,-2086){\makebox(0,0)[lb]{\smash{{\SetFigFont{9}{13.2}{\rmdefault}{\mddefault}{\updefault}{\color[rgb]{0,0,0}$a_i^1$}%
}}}}
\put(8551,-1861){\makebox(0,0)[lb]{\smash{{\SetFigFont{9}{13.2}{\rmdefault}{\mddefault}{\updefault}{\color[rgb]{0,0,0}1}%
}}}}
\put(8101,-1861){\makebox(0,0)[lb]{\smash{{\SetFigFont{9}{13.2}{\rmdefault}{\mddefault}{\updefault}{\color[rgb]{0,0,0}1}%
}}}}
\put(7651,-1861){\makebox(0,0)[lb]{\smash{{\SetFigFont{9}{13.2}{\rmdefault}{\mddefault}{\updefault}{\color[rgb]{0,0,0}1}%
}}}}
\put(7201,-1861){\makebox(0,0)[lb]{\smash{{\SetFigFont{9}{13.2}{\rmdefault}{\mddefault}{\updefault}{\color[rgb]{0,0,0}1}%
}}}}
\put(3826,-2761){\makebox(0,0)[lb]{\smash{{\SetFigFont{9}{13.2}{\rmdefault}{\mddefault}{\updefault}{\color[rgb]{0,0,0}1}%
}}}}
\put(3376,-2761){\makebox(0,0)[lb]{\smash{{\SetFigFont{9}{13.2}{\rmdefault}{\mddefault}{\updefault}{\color[rgb]{0,0,0}1}%
}}}}
\put(2926,-2761){\makebox(0,0)[lb]{\smash{{\SetFigFont{9}{13.2}{\rmdefault}{\mddefault}{\updefault}{\color[rgb]{0,0,0}1}%
}}}}
\put(2476,-2761){\makebox(0,0)[lb]{\smash{{\SetFigFont{9}{13.2}{\rmdefault}{\mddefault}{\updefault}{\color[rgb]{0,0,0}1}%
}}}}
\put(2701,-2536){\makebox(0,0)[lb]{\smash{{\SetFigFont{9}{13.2}{\rmdefault}{\mddefault}{\updefault}{\color[rgb]{0,0,0}$\alpha_i^w$}%
}}}}
\put(2926,-2311){\makebox(0,0)[lb]{\smash{{\SetFigFont{9}{13.2}{\rmdefault}{\mddefault}{\updefault}{\color[rgb]{0,0,0}$\iddots$}%
}}}}
\put(3151,-2086){\makebox(0,0)[lb]{\smash{{\SetFigFont{9}{13.2}{\rmdefault}{\mddefault}{\updefault}{\color[rgb]{0,0,0}$\alpha_i^1$}%
}}}}
\put(3826,-1861){\makebox(0,0)[lb]{\smash{{\SetFigFont{9}{13.2}{\rmdefault}{\mddefault}{\updefault}{\color[rgb]{0,0,0}1}%
}}}}
\put(3376,-1861){\makebox(0,0)[lb]{\smash{{\SetFigFont{9}{13.2}{\rmdefault}{\mddefault}{\updefault}{\color[rgb]{0,0,0}1}%
}}}}
\put(2926,-1861){\makebox(0,0)[lb]{\smash{{\SetFigFont{9}{13.2}{\rmdefault}{\mddefault}{\updefault}{\color[rgb]{0,0,0}1}%
}}}}
\put(2476,-1861){\makebox(0,0)[lb]{\smash{{\SetFigFont{9}{13.2}{\rmdefault}{\mddefault}{\updefault}{\color[rgb]{0,0,0}1}%
}}}}
\put(7876,-1411){\makebox(0,0)[lb]{\smash{{\SetFigFont{11}{13.2}{\rmdefault}{\mddefault}{\updefault}{\color[rgb]{0,0,0}$\Fc_{w+1,n}$}%
}}}}
\put(2826,-1411){\makebox(0,0)[lb]{\smash{{\SetFigFont{11}{13.2}{\rmdefault}{\mddefault}{\updefault}{\color[rgb]{0,0,0}$\Fc_{k+1,n}$}%
}}}}
\put(5000,-870){\makebox(0,0)[lb]{\smash{{\SetFigFont{8}{13.2}{\rmdefault}{\mddefault}{\updefault}{\color[rgb]{0,0,0}$V_i=a_i^1V_{i-1}-a_i^2V_{i-2}+\ldots$}%
}}}}
{\color[rgb]{0,0,0}\put(6301,-3461){\vector(-3,-1){  0}}
\put(6301,-3461){\vector( 3, 1){1350}}
}%
\end{picture}%
 \caption{Duality between periodic difference equations, friezes of solutions and friezes of coefficients.}
  \label{Illustre}
 \end{figure}
\end{center}

\subsection{Proof of Theorem \ref{TheGailThm} and Propositon~\ref{GDFprop}}\label{PruS}
$\,$

Let us consider the following $n$-(anti)periodic sequence~$(W_i)$.
On the interval $(W_{-w},\ldots,W_{n-w-1})$ of length $n$ we set:
$$
 \begin{array}{lccccccccccccccccccccccc}
(W_{-w},&\ldots,&W_{-1},&W_{0},&W_{1},&W_{2},&\ldots,
&W_{n-w-2},&W_{n-w-1}):=\\[6pt]
(0,&\ldots,&0,&1,&a_n^{k},&a_n^{k-1},&\ldots,&a_n^{1},&1)\\  \end{array}
$$
and then continue by (anti)periodicity: $W_{i+n}=(-1)^wW_i$.

\begin{lemma}
\label{SoLem}
The constructed sequence $(W_i)$ satisfies the equation~(\ref{TheDualEq}).
\end{lemma}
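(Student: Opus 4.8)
The plan is to verify \eqref{TheDualEq} for the sequence $(W_i)$ by hand and to recognize the resulting identity as nothing but the original equation \eqref{REq} evaluated at a single index. First I would reduce to a finite check: since the coefficients $\alpha_i^j$ are $n$-periodic and $(W_i)$ is $n$-(anti)periodic by construction, it suffices to establish \eqref{TheDualEq} for one period of indices $i$. Rewriting \eqref{TheDualEq} in the homogeneous form $\sum_{m=0}^{w+1}(-1)^m\alpha_i^m W_{i-m}=0$ with the conventions $\alpha_i^0=\alpha_i^{w+1}=1$, I would then feed in the frieze description of the coefficients. By the correspondence \eqref{Corresp} one has $\alpha_i^m=d_{i+1,\,w+i+1-m}$ (the two extreme values being the boundary $1$'s $d_{i+1,(i+1)+w}$ and $d_{i+1,(i+1)-1}$), so the numbers $\alpha_i^0,\dots,\alpha_i^{w+1}$ are exactly the successive entries of the North-East diagonal $\mu_{i+1}$ of the frieze \eqref{friezeGG} attached to \eqref{REq}.

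After the reindexing $l=w+i+1-m$, the relation to be proved becomes
\begin{equation*}
\sum_{l}(-1)^{l}\,d_{i+1,l}\,W_{l-w-1}=0,
\end{equation*}
that is, the $n$-vector $r$ with entries $r_l:=(-1)^l W_{l-w-1}$ is orthogonal to the diagonal $\mu_{i+1}=(d_{i+1,l})_l$; as $i$ varies these diagonals span the whole $(k+1)$-dimensional solution space of \eqref{REq}, so morally the claim says that $r$ is Gale-orthogonal to every solution. The key observation is that $r$ is supported, within one period, on the $k+2$ consecutive indices $l\in\{w+1,\dots,n\}$, where its values are precisely the data defining $(W_i)$, namely $W_0=1$, $W_{k+1}=1$ and $W_p=a_n^{\,k+1-p}$ for $1\le p\le k$. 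Substituting these and reindexing the sum by $q=n-l$, the orthogonality relation turns, up to an overall nonzero sign $(-1)^n$, into
\begin{equation*}
\sum_{q=0}^{k+1}(-1)^q a_n^{\,q}\,d_{i+1,\,n-q}=0,\qquad a_n^0=a_n^{k+1}=1,
\end{equation*}
which is exactly equation \eqref{REq} written at the index $s=n$ for the sequence $\mu_{i+1}$.

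Since $\mu_{i+1}$ is, by the very construction of the frieze in Part (i), a genuine solution of \eqref{REq} at every index, this identity holds and the lemma follows. The part needing care is purely the index and sign bookkeeping: one must check that the two alternating signs, the $(-1)^m$ of \eqref{TheDualEq} and the $(-1)^k$ in the last coefficient of \eqref{REq}, combine into the single factor $(-1)^n$, and that the boundary $1$'s of $\mu_{i+1}$ correctly play the role of the coefficients $a_n^0$ and $a_n^{k+1}$. One should also remark that the frieze zeros occurring along $\mu_{i+1}$ are honest zeros of the corresponding solution inside the relevant period, so that evaluating \eqref{REq} at $s=n$ is legitimate for every $i$; beyond this, no further input (in particular, not tameness) is required.
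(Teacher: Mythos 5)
Your proof is correct and is essentially the paper's own argument run in reverse: the paper writes the vector identity $\eta_n=a_n^1\eta_{n-1}-\cdots+(-1)^k\eta_{n-k-1}$ for the South-East diagonals and reads off its components as the instances of \eqref{TheDualEq}, while you start from \eqref{TheDualEq} at index $i$ and unwind it to exactly that component identity $\sum_{q=0}^{k+1}(-1)^q a_n^q d_{i+1,n-q}=0$, i.e.\ \eqref{REq} at $s=n$ along the diagonal through row $i+1$. The index/sign bookkeeping and the periodicity step you flag all check out.
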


\noindent
{\it Proof of the lemma}.
By construction of the frieze pattern~(\ref{friezeGG}),
its South-East diagonals $\eta_i$ satisfy~\eqref{REq}.

 Consider the following selection of $k+2$ diagonals in the frieze pattern~(\ref{friezeGG}),
 and form the following $(k+2)\times{}n$-matrix:
$$
\begin{blockarray}{cccccccc}
\eta_{n-k-1} & \eta_{n-k} &  &  &  \eta_{n} \\[10pt]
\begin{block}{(ccccccc)c}
 \alpha^{1}_1 & 1 &0 & \cdots & 0 & \\[6pt]
\alpha^{2}_2 & \alpha^{1}_2 & 1 & 0 & \vdots& \\[6pt]
 \vdots & & \ddots & \ddots & 0 &\\[6pt]
 \vdots &  & &  \alpha^{1}_{k+1} &  1& \\[10pt]
\alpha^{w}_w &  &  &  &  \alpha^{1}_{k+2}&\\[6pt]
 1 &   \alpha^{w}_{w+1}& &  &  \vdots& \\[6pt]
0& \ddots & &  & \vdots &\\[6pt]
\vdots&& 1&  \alpha^{w}_{n-2}& \alpha^{w-1}_{n-2} &\\[6pt]
0&  & 0 & 1 & \alpha^{w}_{n-1} &\\[6pt]
 (-1)^k & 0& \cdots& 0 & 1 & \\[6pt]
\end{block}
\end{blockarray}
$$
The above diagonals satisfy the equation:
$$
\eta_n=a_n^1\eta_{n-1}-\ldots-(-1)^ka_n^k\eta_{n-k}+(-1)^k\eta_{n-k-1}.
$$
Let us express this equation for each component (i.e., each row of the above matrix). 

The first row gives $a_n^k=\alpha^{1}_1$, which can be rewritten as
$W_1=\alpha^{1}_1W_0$. 
Since, by construction, $W_{-1}=\cdots=W_{-w}=0$, we can rewrite this relation as
$$
  W_1= \alpha^{1}_1W_0-\alpha^{2}_1W_{-1}+\cdots+(-1)^wW_{-w},
$$
which is precisely~(\ref{TheDualEq}) for $i=1$.
Then considering the second component of the vectors $\eta$, we obtain the relation
$
0=a_n^{k-1}-a_n^k \alpha^{1}_2+ \alpha^{2}_2,
$
which reads as 
$$
  \begin{array}{cclllllllllllllllllllllllllllll}
 W_2&=& \alpha^{1}_2W_1&-& \alpha^{2}_2W_0&&\Longleftrightarrow\\[6pt]
 W_2&=& \alpha^{1}_2W_1&- &\alpha^{2}_2W_0&+
 &\alpha^{3}_2W_{-1}&+\cdots+&(-1)^wW_{1-w}.
 \end{array}
$$
Continuing this process, 
we obtain $n$ relations which correspond precisely to the equation~(\ref{TheDualEq}),
for $i=1,\ldots,n$.
Hence the lemma.
\hfill\proofend

Shifting the indices, in a similar way 
we obtain $w$ other solutions of the equation~(\ref{TheDualEq})
that, on the period $(W_{-w},\ldots,W_{n-w-1})$, are given by:
$$
 \begin{array}{cccccccccccccccccccccccc}
(0,&\ldots,&0,&1,&a_{n-1}^{k},&a_{n-1}^{k-1},&\ldots,&a_{n-1}^{1},&1,&0)\\[6pt]  
(0,&\ldots,&1,&a_{n-2}^{k},&a_{n-2}^{k-1},&\ldots,&a_{n-2}^{1},&1,&0,&0)\\ 
\vdots\\
(1,&a_{n-w}^{k},&a_{n-w}^{k-1},&\ldots,&a_{n-w}^{1},&1,&0,&\dots,&0,&0)
\end{array}
$$
Together with the solution from Lemma~\ref{SoLem}, these solutions
are linearly independent and therefore form a basis of $n$-(anti)periodic solutions
of equation~(\ref{TheDualEq}).
We proved that this equation indeed belongs to $\Ec_{w+1,n}$,
so that the map $\Gc$ is well-defined.

The relation between the equations~(\ref{REq}) and~(\ref{TheDualEq})
can be described as follows: the solutions of the former one are the
coefficients of the latter, and vice-versa.
Therefore, the map $\Gc$ is an involution.
This finishes the proof of
Theorem~\ref{TheGailThm}.
\hfill
\proofend

To prove Propositon~\ref{GDFprop}, it suffice
to notice that the diagonals of the pattern~\eqref{friezeG}
are exactly the solutions of the equation~(\ref{TheDualEq})
with initial conditions $(0,\ldots,0,1,a_i^{k})$.
This is exactly the way we associate a frieze pattern to a differential equation,
see Section~\ref{1stIso}.
\hfill
\proofend

\begin{example}
Let us give the most elementary, but perhaps the most striking, example
of the combinatorial Gale transform.
Suppose that a difference equation (\ref{REq}) of order~$k+1$ is such that
 all its solutions $(V_i)$
are $(k+3)$-(anti)periodic: 
$$
V_{i+k+3}=(-1)^k\,V_i.
$$
Consider the Hill equation
$$
W_{i}=a_{i}^1W_{i-1}-W_{i-2},
$$
obtained by ``forgetting'' the coefficients $a_{i}^{j}$ with $j\geq2$.
Theorem~\ref{TheGailThm} then implies that all the solutions of this
equation are antiperiodic with the same period:
$W_{i+k+3}=-W_i$.

Conversely, any difference equation (\ref{REq}) of order $k+1$
with $(k+3)$-(anti)periodic solutions can be constructed out of a Hill equation.
\end{example}

At first glance, it appears paradoxical that forgetting almost all the information
about the coefficients of a difference equation, we still keep the information about
the (anti)periodicity of solutions.

\subsection{Comparison with the classical Gale transform}\label{ClasS}

Recall that the classical Gale transform of configurations of points in projective spaces
is a map:
$$
\Gc_{\mathrm{class}}:{\mathcal C}_{k+1,n} 
\buildrel{\simeq}\over\longrightarrow{\mathcal C}_{w+1,n}\;,
$$ 
where $n=k+w+2$, see \cite{Gal,Cob,Cob1,EP}.

The classical Gale transform is defined as follows.
Let $A$ be a $(k+1)\times{}n$-matrix representing an element of ${\mathcal C}_{k+1,n}$, 
and $A'$ a $(w+1)\times{}n$-matrix representing an element of ${\mathcal C}_{w+1,n}$, 
see (\ref{Grass}) and (\ref{TheQuot}).
These elements are in Gale duality if there exists a
non-degenerate diagonal $n\times{}n$-matrix $D$
such that
$$
AD{A'}^T=0,
$$
where ${A'}^T$ is the transposed matrix.
This is precisely the duality of the corresponding Grassmannians combined
with the quotient (\ref{TheQuot}).

To understand the difference between the combinatorial Gale transform
and the classical Gale transform, recall that the space
${\mathcal E}_{k+1,n}\simeq{\mathcal F}_{k+1,n}$ 
is a subvariety of the Grassmannian~$\Gr_{k+1,n}$. 
Given an $\SL_{k+1}$-frieze pattern $F$,
the $(k+1)\times{}n$-matrix $M^{(i)}_F$ representing $F$, as in~(\ref{TheEmbM}),
satisfies the condition that every 
$(k+1)\times(k+1)$-minor 
$$
\Delta_I(M^{(i)}_F)=1.
$$
This implies that the diagonal matrix $D$ 
also has to be of a particular form.

\begin{proposition}
\label{FrGale}
Let $F$  be an $\SL_{k+1}$-frieze pattern and $\Gc(F)$ 
its Gale dual $\SL_{w+1}$-frieze pattern.
Then the corresponding matrices satisfy
\begin{equation}
\label{TheGaleM}
M^{(i)}_F\,D\,{M^{(j)}_{\Gc(F)}}^T=0,
\end{equation}
where $i-j=w+1\mod n$, and where the diagonal matrix 
\begin{equation}
\label{DDiagM}
D=\left(
\begin{array}{cccc}
1&0&\ldots&0\\[4pt]
0&-1&\ldots&0\\
\vdots&\vdots&\ddots&\vdots\\[4pt]
0&0&\ldots&(-1)^{n-1}
\end{array}
\right).
\end{equation}
\end{proposition}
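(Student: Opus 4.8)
The plan is to prove the identity~\eqref{TheGaleM} entrywise, by showing that every row of $M^{(i)}_F$ is orthogonal to every row of $M^{(j)}_{\Gc(F)}$ with respect to the symmetric bilinear form $\langle x,y\rangle_D:=x\,D\,y^{T}=\sum_{p=1}^{n}(-1)^{p-1}x_py_p$ attached to the diagonal matrix $D$ of~\eqref{DDiagM}. Since $D$ is non-degenerate and the row spans $U\subset\R^n$ of $M^{(i)}_F$ and $U'\subset\R^n$ of $M^{(j)}_{\Gc(F)}$ have complementary dimensions $(k+1)+(w+1)=n$, this row-by-row orthogonality is equivalent to the statement that $U'$ is the $D$-orthogonal complement of $U$, i.e.\ to Gale duality of the corresponding points of $\Gr_{k+1,n}$ and $\Gr_{w+1,n}$; but for the proposition it suffices to check that each such pairing vanishes.

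First I would fix a dictionary between columns and indices. By the construction of the isomorphism $\Ec_{k+1,n}\simeq\Fc_{k+1,n}$ in Section~\ref{1stIso}, the $\ell$-th row of $M^{(i)}_F$ (for $\ell=0,\dots,k$) is the North-East diagonal $\mu_{i+\ell}$, that is, the solution $(V^{[\ell]}_s)_{s\in\Z}$ of~\eqref{REq} with $(V^{[\ell]}_{i+\ell-k-1},\dots,V^{[\ell]}_{i+\ell-1})=(0,\dots,0,1)$, read off over one period: $(M^{(i)}_F)_{\ell,p}=V^{[\ell]}_{i+p-2}$ for $p=1,\dots,n$. A short verification shows that on this window the matrix entries agree with genuine solution values, with no wrap-around contribution, so that~\eqref{REq} may be applied to them at every interior index. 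Dually, by Proposition~\ref{GDFprop} and the explicit basis of $n$-(anti)periodic solutions of~\eqref{TheDualEq} exhibited in Section~\ref{PruS}, the $m$-th row of $M^{(j)}_{\Gc(F)}$ (for $m=0,\dots,w$) is the coefficient block $(1,a^{k}_{c},a^{k-1}_{c},\dots,a^{1}_{c},1)$ supported on columns $m+1,\dots,m+k+2$, for a certain index $c=c(m,j)$; writing the $s$-th block entry as $a^{\,k+1-s}_{c}$ with the convention $a^{0}_{c}=a^{k+1}_{c}=1$, one has $(M^{(j)}_{\Gc(F)})_{m,\,m+1+s}=a^{\,k+1-s}_{c}$.

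The heart of the argument is then a one-line computation. Using the two dictionaries, the $(\ell,m)$ entry of $M^{(i)}_F\,D\,{M^{(j)}_{\Gc(F)}}^{T}$ is
\[
\sum_{s=0}^{k+1}(-1)^{m+s}\,V^{[\ell]}_{i+m+s-1}\,a^{\,k+1-s}_{c}
=(-1)^{m+k+1}\sum_{s'=0}^{k+1}(-1)^{s'}a^{s'}_{c}\,V^{[\ell]}_{(i+m+k)-s'},
\]
after the substitution $s'=k+1-s$. The inner alternating sum is \emph{exactly} the left-hand side of~\eqref{REq} written at base index $r=i+m+k$ and evaluated on the solution $V^{[\ell]}$, \emph{provided} the coefficient index satisfies $c\equiv i+m+k\pmod n$. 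Since $V^{[\ell]}$ solves~\eqref{REq} and the indices $(i+m+k)-s'$, $s'=0,\dots,k+1$, all lie inside the chosen period, this sum vanishes; note the answer is independent of $\ell$, precisely because \emph{every} solution of~\eqref{REq} kills this combination. Thus the whole product is zero, once the index condition is established.

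The main obstacle --- and the only place the hypothesis $i-j\equiv w+1\pmod n$ enters --- is verifying this index matching $c(m,j)\equiv i+m+k$. I would pin down $c(m,j)$ from the explicit basis of Section~\ref{PruS}: those solutions are the North-East diagonals $\mu^{\Gc}_{j+m}$ of $\Gc(F)$, and tracking their coefficient index (consecutive diagonals carry consecutive coefficient indices) gives $c(m,j)\equiv j+m-1\pmod n$. The required identity $c\equiv i+m+k$ then reads $j+m-1\equiv i+m+k$, i.e.\ $i-j\equiv -(k+1)\equiv w+1\pmod n$ since $n=k+w+2$, which is exactly the hypothesis. Carrying out this bookkeeping carefully --- matching the column-to-index conventions of~\eqref{TheEmbM} for $\Gc(F)$ with the placement of the blocks in Section~\ref{PruS}, and tracking the $(-1)^{p-1}$ signs of $D$ --- is the delicate part; everything else reduces to the single recurrence identity above.
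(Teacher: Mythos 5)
Your proof is correct and follows essentially the same route as the paper's: both write out the two matrices explicitly and observe that each row of $M^{(j)}_{\Gc(F)}$, twisted by the alternating signs of $D$, is exactly the coefficient vector of the recurrence~(\ref{REq}), which annihilates the solution values (the diagonals of $F$) filling $M^{(i)}_F$. The only difference is presentational --- you pair rows with scalar solutions $\mu_{i+\ell}$ while the paper pairs the coefficient rows with the vector-valued column sequence $\eta_{i-1},\dots,\eta_{i+k+w}$ --- and you spell out the index bookkeeping forcing $i-j\equiv w+1 \pmod n$, which the paper leaves implicit.
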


\begin{proof}
The matrices are explicitly given by 
$$
M^{(i)}_F=
\left(
\begin{array}{cccccccccccccc}
 1 &   \alpha^{w}_{i-1}& &  &   &&\alpha^{1}_{i-1}&1 \\[4pt]
& \ddots & &  & &&&&\ddots\\[4pt]
&    & 1 & \alpha^{w}_{i+k-1} &&&&&\alpha^{1}_{i+k-1} &1\\[6pt]
\end{array}\right)\;,
$$
of size $(k+1)\times  n$ and
$$
M^{(j)}_{\Gc(F)}=
\left(
\begin{array}{cccccccccccccc}
 1 &   a^{k}_{j-1}& &     &&a^{1}_{j-1}&1 \\[4pt]
 & 1&  a^{k}_{j}& &&&a^{1}_{j}&1\\[2pt]
&& \ddots & &  & &&&&\ddots\\[4pt]
&  &  & 1 & a^{k}_{j+w-1} &&&&&a^{1}_{j+w-1} &1\\[6pt]
\end{array}\right)\;,
$$
of size $(w+1)\times  n$.
The columns of the matrix $M^{(i)}_F$
correspond to the diagonals $\eta_{i-1}, \cdots, \eta_{i+k+w}$ in $F$, which are solutions of \eqref{REq}.
This gives immediately the relation
$$
M^{(i)}_F\,D\,{M^{(j)}_{\Gc(F)}}^T=0,
\quad  \text{ where } \quad 
  D=\mathrm{diag}(1,-1,1,-1,\ldots ).
$$
\end{proof}

Recall that in Section~\ref{1stIso}, we  defined a projection
from the space of equations to the moduli space of $n$-gons.
The Gale transform agrees with this projection.

\begin{corollary}
 The following diagram commutes
 $$
   \xymatrix{
    \Ec_{k+1,n} \ar[r]^{\Gc} \ar[d]& \Ec_{w+1,n} \ar[d]\\
 \Cc_{k+1,n}  \ar[r]_{\Gc_{\mathrm{class}}} &\Cc_{w+1,n}  
  }
 $$
\end{corollary}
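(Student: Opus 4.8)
The plan is to read the commutativity straight off Proposition~\ref{FrGale}, which already carries all the substantive content; what remains is to unwind the two vertical projections and the definition of $\Gc_{\mathrm{class}}$ recalled in Section~\ref{ClasS}. First I would fix representatives. Let $E\in\Ec_{k+1,n}$ be a difference equation, let $F$ be its associated $\SL_{k+1}$-frieze pattern~(\ref{friezeGG}), and let $\Gc(E)\in\Ec_{w+1,n}$ be its combinatorial Gale dual, with frieze $\Gc(F)$ as in~(\ref{friezeG}). By the construction of the left vertical arrow $\Ec_{k+1,n}\to\Cc_{k+1,n}$ in Section~\ref{1stIso}, together with the quotient realization~(\ref{TheQuot}), the image of $E$ is the class in $\Cc_{k+1,n}\simeq\Gr_{k+1,n}/\T^{n-1}$ of the matrix $M^{(i)}_F$ of~(\ref{TheEmbM}): its columns are the South-East diagonals $\eta_{i-1},\dots,\eta_{i+k+w}$, i.e. a lift to $\R^{k+1}$ of the $n$ consecutive vertices of the associated $n$-gon. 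Likewise the image of $\Gc(E)$ in $\Cc_{w+1,n}$ is the class of $M^{(j)}_{\Gc(F)}$.

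Next I would invoke Proposition~\ref{FrGale}: taking $j$ with $i-j\equiv w+1\pmod n$ gives
$$
M^{(i)}_F\,D\,{M^{(j)}_{\Gc(F)}}^T=0,\qquad D=\mathrm{diag}(1,-1,1,-1,\dots),
$$
where $D$ is a non-degenerate diagonal $n\times n$-matrix. Comparing with the defining condition $A\,D\,{A'}^T=0$ of the classical Gale transform recalled in Section~\ref{ClasS}, this is exactly the assertion that the classes $[M^{(i)}_F]\in\Cc_{k+1,n}$ and $[M^{(j)}_{\Gc(F)}]\in\Cc_{w+1,n}$ are classically Gale dual, i.e. $\Gc_{\mathrm{class}}\bigl([M^{(i)}_F]\bigr)=[M^{(j)}_{\Gc(F)}]$. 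Tracing the square, the composite $\Ec_{k+1,n}\to\Cc_{k+1,n}\to\Cc_{w+1,n}$ sends $E$ to $\Gc_{\mathrm{class}}([M^{(i)}_F])$, while $\Ec_{k+1,n}\to\Ec_{w+1,n}\to\Cc_{w+1,n}$ sends $E$ to $[M^{(j)}_{\Gc(F)}]$; by the displayed identity these coincide, which is the claimed commutativity.

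The one point needing care, and which I regard as the main (if minor) obstacle, is the compatibility of the index choices with the well-definedness of the vertical maps: the embedding $\Fc_{k+1,n}\subset\Gr_{k+1,n}$ depends on the chosen index, and Proposition~\ref{FrGale} forces the shift $i-j\equiv w+1\pmod n$ rather than $i=j$. I would resolve this by noting that passing from $M^{(i)}_F$ to $M^{(i+1)}_F$ only cyclically shifts the columns $\eta_\bullet$ and multiplies the wrapped-around column by the sign dictated by the (anti)periodicity~(\ref{APeriod}) (since $\eta_{s+n}=(-1)^k\eta_s$); such an operation leaves the $\T^{n-1}$-orbit, hence the underlying point of $\Cc$, unchanged up to the standard cyclic relabeling of vertices. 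As $\Gc_{\mathrm{class}}$ is defined at the level of these orbits, the admissible shift $i-j\equiv w+1$ is harmless, and any matched pair $(i,j)$ yields the same conclusion, so the diagram commutes.
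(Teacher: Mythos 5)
Your argument follows the paper's own proof: both read the commutativity directly off Proposition~\ref{FrGale}, identifying the relation $M^{(i)}_F\,D\,{M^{(j)}_{\Gc(F)}}^T=0$ with the defining condition $A\,D\,{A'}^T=0$ of $\Gc_{\mathrm{class}}$ and observing that this condition descends to the $\T^{n-1}$-cosets. Your extra paragraph on the index shift $i-j\equiv w+1 \pmod n$ addresses a point the paper passes over in silence, and your resolution (the shift is a cyclic relabeling of vertices, absorbed into the conventions fixing the vertical arrows, with the antiperiodicity sign absorbed by the torus action) is sound and if anything more careful than the published two-line proof.
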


\begin{proof}
The projection $\Ec_{k+1,n}\to\Cc_{k+1,n}$, written in the matrix form,
associates to a matrix representing an element of $\Ec_{k+1,n}$ a coset
in the quotient $\Gr_{k+1,n}\slash\T^{n-1}$ defined by
left multiplication by diagonal $n\times{}n$-matrices.
If two representatives of the coset satisfy~(\ref{TheGaleM}),
then any two other representatives also do.
\end{proof}

$\,$

 \begin{example}
 Choose $k=2, w=3, n=7$.
 The two friezes of Figure \ref{DualFriezes} are Gale dual to each other.
 
\begin{figure}[hbtp]
\input{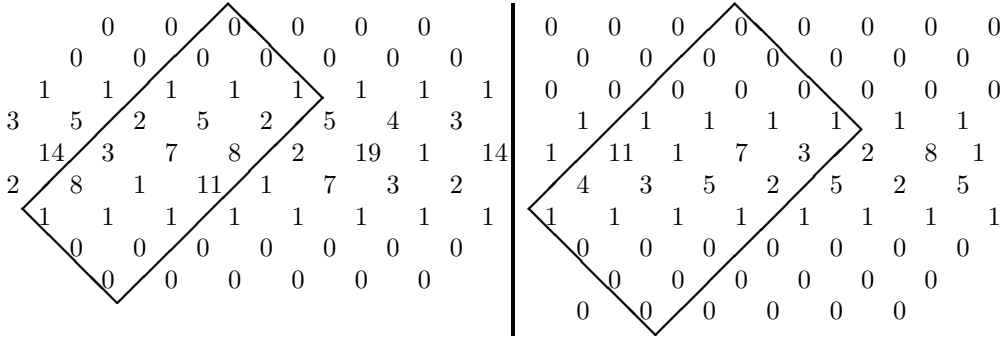}
\caption{An $\SL_3$-frieze of width 3 (matrix $A$ left) and its Gale dual 
$\SL_4$-frieze of width 2 (matrix $A'$ right).}
\label{DualFriezes}
\end{figure}

\noindent
One immediately checks in this example that the corresponding matrices satisfy
 $$
 ADA'^T=0,\quad  \text{ for } \quad 
  D=\mathrm{diag}(1,-1,1,-1,1,-1,1). 
  $$
 \end{example}

\subsection{The projective duality}\label{DualSect}
Recall that the dual projective space $(\RP^k)^*$
(which is of course itself isomorphic to $\RP^k$) is the space
of hyperplanes in~$\RP^k$.
The notion of {\it projective duality} is central in projective geometry.

Projective duality is usually defined for generic $n$-gons as follows.
Given an $n$-gon $(v_i)$ in $\RP^k$, the {\it projectively dual $n$-gon}
$(v_i^*)$ in $(\RP^k)^*$ is the $n$-gon such that each vertex $v_i^*$
is the hyperplane $(v_i , v_{i+1},\ldots,v_{i+k-1})\subset\RP^k$.
This procedure obviously commutes with the action of $\SL_{k+1}$,
so that one obtains a map 
$$
*:{\mathcal C}_{k+1,n}\to{\mathcal C}_{k+1,n},
$$
which squares to a shift: $*\circ*:(v_i)\mapsto(v_{i+k-1})$.

In this section, we introduce an
analog of the projective duality on the space of difference equations
and that of frieze patterns:
$$
*:{\mathcal E}_{k+1,n}\to{\mathcal E}_{k+1,n},
\qquad
*:{\mathcal F}_{k+1,n}\to{\mathcal F}_{k+1,n}.
$$
The square of $*$ also shifts the indices, but this shift is
``invisible'' on equations and friezes so that it is an  {\it involution}: 
$*\circ*=\mathrm{id}$.

\begin{definition}
The difference equation
\begin{equation}
\label{DualREq}
V^*_{i}=a_{i+k-1}^kV^*_{i-1}-a_{i+k-2}^{k-1}V^*_{i-2}+ 
\cdots+(-1)^{k-1}a_{i}^1V^*_{i-k}+(-1)^{k}V^*_{i-k-1}
\end{equation}
is called the {\it projective dual of equation} (\ref{REq}).
\end{definition}

Here $(V_i^*)$ is just a notation for the unknown.

\begin{example}
\label{DeEx}
The projective dual of the equation
$V_{i}=a_{i}V_{i-1}-b_iV_{i-2}+V_{i-3}$ is:
$$
V^*_{i}=b_{i+1}V^*_{i-1}-a_iV^*_{i-2}+V^*_{i-3}.
$$
\end{example}

The above definition is justified by the following statement.

\begin{proposition}
\label{DualProp}
The map ${\mathcal E}_{k+1,n}\to{\mathcal C}_{k+1,n}$
from Section~\ref{1stIso} commutes with projective duality.
\end{proposition}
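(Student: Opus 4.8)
The plan is to evaluate both composites in the square on an explicit lift and to check that they produce the same $n$-gon. Fix $k+1$ linearly independent solutions of~(\ref{REq}), assembled into a vector sequence $(V_i)$ with $V_i\in\R^{k+1}$; its projective class is the image of~(\ref{REq}) under the map $\Ec_{k+1,n}\to\Cc_{k+1,n}$ of Section~\ref{1stIso}, and by~\eqref{detconst} any $k+1$ consecutive $V_i$ form a basis of $\R^{k+1}$. Introduce covectors $V^*_i\in(\R^{k+1})^*$ by $\langle V^*_i,U\rangle=\left|V_i,V_{i+1},\ldots,V_{i+k-1},U\right|$. By antisymmetry of the determinant, $V^*_i$ annihilates $V_i,\ldots,V_{i+k-1}$, so the point $[V^*_i]$ is exactly the hyperplane $(v_i,\ldots,v_{i+k-1})$; thus $(V^*_i)$ is a lift of the projectively dual $n$-gon of $(v_i)$. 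Since the map $\Ec_{k+1,n}\to\Cc_{k+1,n}$ sends~(\ref{DualREq}) to the projective class of any of its vector solutions, the whole proposition reduces to a single assertion: \emph{the sequence $(V^*_i)$ is a solution of the dual equation~(\ref{DualREq})}.

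To prove this I would identify $V^*_i$ with the decomposable $k$-vector $W_i:=V_i\wedge V_{i+1}\wedge\cdots\wedge V_{i+k-1}$, via the isomorphism $\Lambda^k\R^{k+1}\cong(\R^{k+1})^*$ furnished by the volume form, and establish
$$W_i=a_{i+k-1}^k W_{i-1}-a_{i+k-2}^{k-1}W_{i-2}+\cdots+(-1)^{k-1}a_i^1 W_{i-k}+(-1)^k W_{i-k-1}$$
as an identity in $\Lambda^k\R^{k+1}$. This is purely multilinear and requires no normalization of the constant in~\eqref{detconst}: one substitutes~(\ref{REq}) into a single factor of the wedge and uses that a wedge with a repeated factor vanishes. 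For example, replacing the top factor $V_{i+k-1}$ by its expression from~(\ref{REq}) annihilates every term except those in $V_{i-1}$ and $V_{i-2}$, and reordering the two surviving wedges yields exactly $a_{i+k-1}^k W_{i-1}$ together with a remainder wedge; iterating the substitution on the remainder produces the lower-order terms with the shifted coefficients $a_{i+k-m}^{k+1-m}$ and the alternating signs. The cases $k=1,2$ are immediate, and for $k=2$ they reproduce Example~\ref{DeEx}, a useful check. Once the recurrence holds, $(W_i)=(V^*_i)$ is a genuine vector solution of~(\ref{DualREq}); its projective class is simultaneously the $n$-gon that the map assigns to~(\ref{DualREq}) and the projective dual of $(v_i)$, so the square commutes.

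The main obstacle is the combinatorial bookkeeping of this identity for general $k$. After the first reduction one is left with a wedge of a block of consecutive vectors together with one or more displaced factors, and each further application of~(\ref{REq}) to an extreme factor contributes a new term $(-1)^{m-1}a_{i+k-m}^{k+1-m}W_{i-m}$ while spawning additional displaced wedges; controlling the signs introduced by the reorderings, and confirming that the coefficient of $W_{i-m}$ is precisely the shifted entry appearing in~(\ref{DualREq}), is the delicate point. I would either push this induction through directly, or, more conceptually, reformulate it through transfer matrices: the map $[V_i|\cdots|V_{i+k}]\mapsto[V_{i+1}|\cdots|V_{i+k+1}]$ is right multiplication by a companion matrix of~(\ref{REq}), so the dual frames $(\mathcal V_i^{-1})^T$ evolve by its inverse transpose, and one checks that this inverse transpose is, up to a fixed reordering of the basis, the companion matrix of~(\ref{DualREq}); reading off its last column recovers the recurrence for $(V^*_i)$ and hence the proposition.
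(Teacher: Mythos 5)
Your argument is essentially the paper's proof: both lift the projectively dual polygon by taking $V_i^*$ to be the vector of $k\times k$-minors of $\left(V_i\,V_{i+1}\,\ldots\,V_{i+k-1}\right)$ (your $W_i=V_i\wedge\cdots\wedge V_{i+k-1}$) and then check that this lift satisfies \eqref{DualREq}. The paper compresses that check into the phrase ``a direct verification then shows''; your wedge-substitution induction is a correct way to carry it out, and the sign bookkeeping closes exactly as you predict (e.g.\ the $m$-th step contributes $(-1)^{m-1}a_{i+k-m}^{k+1-m}W_{i-m}$, matching Example~\ref{DeEx} for $k=2$).
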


\begin{proof}
Recall that an $n$-gon $(v_i)$ is in the image of the map 
${\mathcal E}_{k+1,n}\to{\mathcal C}_{k+1,n}$
if and only if it is a projection of an $n$-gon $(V_i)$ in
$\R^{k+1}$ satisfying the determinant condition (\ref{detconst}).

Let us first show that
the dual $n$-gon $(v_i^*)$ is also in the image of 
the map ${\mathcal E}_{k+1,n}\to{\mathcal C}_{k+1,n}$.
Indeed, by definition of projective duality,
the affine coordinates of a vertex $v_i^*\in(\RP^k)^*$ of the dual $n$-gon
can be calculated as the $k\times{}k$-minors of
the $k\times(k+1)$-matrix
$$
\left(
V_i \, V_{i+1}\,\ldots\,V_{i+k-1}
\right)
$$
where $V_j$ are understood as $(k+1)$-vectors (i.e., the columns of the matrix).
Denote by $V^*_i$ the vector in $(\R^{k+1})^*$ with coordinates
given by the $k\times{}k$-minors.
In other words, the vector $V^*_i$ is defined by the equation
$$
\left|
V_i , V_{i+1},\ldots,V_{i+k-1},V_i^*
\right|=1.
$$

A direct verification then shows that $(V_i^*)$ 
satisfy the equation (\ref{DualREq}).
\end{proof}

The isomorphism ${\mathcal E}_{k+1,n}\simeq{\mathcal F}_{k+1,n}$ 
allows us to define the notion of projective
duality on $\SL_{k+1}$-frieze patterns.

\begin{proposition}
\label{Turn}
The projective duality of $\SL_{k+1}$-frieze patterns is 
just the symmetry with respect to the median horizontal axis.
\end{proposition}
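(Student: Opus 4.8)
The plan is to compute the entries of the frieze explicitly and then to recognise the horizontally reflected array as the frieze of the dual equation \eqref{DualREq}. First I would fix a vector solution $(V_s)_{s\in\Z}$ of \eqref{REq} in $\R^{k+1}$ spanning the solution space and normalised so that $|V_s,\ldots,V_{s+k}|=1$ for all $s$; this is possible precisely because of the constant in \eqref{detconst}. Solving \eqref{REq} with the standard initial data $(0,\ldots,0,1)$ by Cramer's rule, the North-East diagonal $\mu_i$ is then given in closed form by
$$
d_{i,j}=\bigl|V_{i-k-1},V_{i-k},\ldots,V_{i-2},V_j\bigr|,
$$
since the right-hand side vanishes for $j=i-k-1,\ldots,i-2$, equals $1$ for $j=i-1$, and solves \eqref{REq} in the variable $j$.

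The key observation is to read these same numbers along the opposite diagonal. The linear functional $X\mapsto|V_{i-k-1},\ldots,V_{i-2},X|$ is, by the construction in the proof of Proposition~\ref{DualProp}, precisely the dual vector $V^{*}_{i-k-1}$, which satisfies the projective dual equation \eqref{DualREq}. Hence $d_{i,j}=\langle V^{*}_{i-k-1},V_j\rangle$, and for a \emph{fixed} column index $j$ the South-East diagonal $\eta_j$, traversed in the direction of increasing $i$, is obtained by pairing the dual solution $(V^{*}_m)$ with the single fixed vector $V_j$. As a linear functional of a solution of \eqref{DualREq}, the sequence $i\mapsto d_{i,j}$ is itself a solution of \eqref{DualREq} (in $i$, up to a shift of the index by $k+1$). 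This is the heart of the matter.

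Next I would identify the median reflection with the exchange $\mu\leftrightarrow\eta$. Placing the entry $d_{i,j}$ at planar coordinates $(i+j,\,j-i)$, the two rows of $1$'s sit at heights $j-i=w$ and $j-i=-1$, so reflection in the median axis is the index map $(i,j)\mapsto(j-c,\,i+c)$ with $c=\tfrac{w-1}{2}$. This map preserves entry values, sends boundary $1$'s to boundary $1$'s, and acts on contiguous index windows as the transposition $(i,j)\leftrightarrow(j,i)$; since transposition preserves determinants, all the $\SL_{k+1}$-conditions \eqref{DEq} and tameness survive, and the reflected array $\widehat F$ is again a tame $\SL_{k+1}$-frieze of width $w$. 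Moreover the reflection turns the South-East diagonals of $F$ into the North-East diagonals of $\widehat F$, which by the previous step are solutions of \eqref{DualREq}. Invoking the isomorphism $\Ec_{k+1,n}\simeq\Fc_{k+1,n}$ of Theorem~\ref{TriThm}(i) --- under which a frieze is determined by the equation its North-East diagonals obey --- I would conclude that $\widehat F$ is the frieze of the projective dual equation, i.e.\ $\widehat F=F^{*}$.

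The step I expect to demand the most care is not this linear algebra but the bookkeeping in the last paragraph. One must check that the reflected North-East diagonals carry exactly the standard initial data $(0,\ldots,0,1)$, so that $\widehat F$ is the \emph{normalised} dual frieze rather than a frieze of rescaled solutions, and that the half-integer translation $c=\tfrac{w-1}{2}$ is precisely the ``invisible'' glide already built into the definition of $*$. Reconciling these normalisation and indexing conventions is what upgrades the abstract statement ``$\widehat F$ and $F^{*}$ satisfy the same dual equation'' into the literal assertion that projective duality is reflection in the median horizontal axis.
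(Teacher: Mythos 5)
Your argument is correct, and its overall architecture is the same as the paper's: everything reduces to the statement that, while the entries read along one family of diagonals solve \eqref{REq}, those read along the other family solve the dual equation \eqref{DualREq} --- this is exactly the paper's Proposition~\ref{DualDiag}, from which Proposition~\ref{Turn} follows because the median reflection exchanges the two families (and transposition of contiguous windows preserves the $\SL_{k+1}$- and tameness conditions). Where you genuinely diverge is in how you establish that key fact. The paper works purely inside the frieze: it takes a vanishing $(k+2)\times(k+2)$ tameness determinant, expands along its first column, identifies the resulting coefficients with $a^k_{i-2},a^{k-1}_{i-3},\dots$ via the determinantal formula \eqref{DuDeT}, and propagates the recurrence by tameness. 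You instead derive the closed formula $d_{i,j}=\bigl|V_{i-k-1},\dots,V_{i-2},V_j\bigr|=\langle V^*_{i-k-1},V_j\rangle$ from a fundamental system normalized so that $|V_s,\dots,V_{s+k}|=1$, and then import from the proof of Proposition~\ref{DualProp} that the dual covectors $(V^*_m)$ satisfy \eqref{DualREq}; applying the functional ``pair with $V_j$'' transports that recurrence to the scalar sequence $i\mapsto d_{i,j}$. Your route is more geometric and makes the word ``projective'' in ``projective duality'' visibly do the work, at the cost of relying on the (only asserted, not written out) direct verification in Proposition~\ref{DualProp} and on the achievability of the normalization $|V_s,\dots,V_{s+k}|=1$ (harmless: rescale the fundamental system by an element of $\GL_{k+1}$ of appropriate determinant); the paper's route is self-contained within the combinatorics of tame friezes. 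Your closing bookkeeping does check out: along $\eta_j$ read with increasing $i$ one sees $k$ zeros, then $1$, then the $w$ nontrivial entries, then $1$, which is precisely the normalized initial data of a North-East diagonal, and the residual shift by $k+1$ (and the parity/glide issue when $w$ is even) is exactly the ``invisible'' translation the paper absorbs into the definition of $*$.
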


We will prove this statement in Section~\ref{DeTSTwo}.
The proof uses the explicit computations. See Proposition~\ref{DualDiag}.

\begin{corollary}
The projective duality commutes with the Gale transform:
$$
*\circ\Gc=\Gc\circ*.
$$
\end{corollary}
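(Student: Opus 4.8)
The plan is to transport the whole statement to friezes and reduce it to a single combinatorial identity about the reflection map. By Proposition~\ref{Turn}, projective duality on friezes is the reflection $R$ in the median horizontal axis, so under the isomorphism $\Ec_{k+1,n}\simeq\Fc_{k+1,n}$ the corollary is equivalent to $R\circ\Gc=\Gc\circ R$ as maps $\Fc_{k+1,n}\to\Fc_{w+1,n}$. The second ingredient is Theorem~\ref{TheGailThm} together with Proposition~\ref{GDFprop}, which exhibit $\Gc$ as the operation that exchanges the coefficient array of an equation with the entry array of its associated frieze: for $E\in\Ec_{k+1,n}$ with coefficients $a^{j}_{i}$ and frieze~\eqref{friezeGG} with entries $\alpha^{s}_{r}$, the dual $\Gc(E)\in\Ec_{w+1,n}$ has coefficients $\alpha^{s}_{i}$ and frieze~\eqref{friezeG} with entries $a^{s}_{r}$. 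Since an element of $\Ec_{w+1,n}$ is determined by its coefficient tuple, it suffices to compare the coefficients of $\Gc(*(E))$ and of $*(\Gc(E))$.

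First I would compute the coefficients of $*(\Gc(E))$. As $\Gc(E)$ has order $w+1$ and coefficients $\alpha$, the dual formula~\eqref{DualREq}, read with $k$ replaced by $w$, gives that $*(\Gc(E))$ has coefficients $(\alpha^{*})^{j}_{i}=\alpha^{\,w+1-j}_{\,i+w-j}$. On the other side, $\Gc(*(E))$ has, by the definition of the Gale dual equation~\eqref{TheDualEq}, coefficients equal to the entries of the frieze of $*(E)$; and by Proposition~\ref{Turn} that frieze is $R(F_{E})$, the reflection of the frieze $F_{E}$ of $E$. Since $F_{E}$ has entry array $\alpha$, the coefficients of $\Gc(*(E))$ are precisely the entries of $R(F_{E})$, i.e. the reflection of the array $(\alpha^{s}_{r})$.

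The argument therefore closes once one knows the explicit effect of $R$ on the entry array of a width-$w$ frieze, namely that reflection in the median axis acts by the substitution $(j,i)\mapsto(w+1-j,\,i+w-j)$ on indices. This is exactly the combinatorial content carried by the explicit diagonal computation underlying Proposition~\ref{Turn} (compare Proposition~\ref{DualDiag}): reflection reverses the row index, $j\mapsto w+1-j$, and the accompanying column shift is forced to be $i\mapsto i+w-j$ by the requirement that the reflected array again satisfy the boundary and $\SL_{w+1}$-conditions. With this identity, the entry array of $R(F_{E})$ equals $\alpha^{\,w+1-j}_{\,i+w-j}$, which coincides with the coefficients of $*(\Gc(E))$; hence the two equations agree and $*\circ\Gc=\Gc\circ*$.

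The main obstacle is not conceptual but consists in reconciling the three index shifts that appear: the Gale shift $i-j=w+1\pmod n$ in the pairing~\eqref{TheGaleM}, the column shift produced by $R$, and the ``invisible'' shift generated by $*\circ*$ (one checks directly from~\eqref{DualREq} that $*\circ*$ shifts the coefficients of an order-$(w+1)$ equation by $w-1$, matching the double application of the reflection substitution). The cleanest way to keep all of these compatible is to realize $R$ at the level of the representing matrices of Proposition~\ref{EmbProp}: $R$ sends $M^{(i)}_{F}$ to $J_{k+1}\,M^{(i')}_{F}\,P$, where $J_{k+1}$ is the row-reversal (anti-diagonal) matrix and $P$ is the induced reversal/shift of columns. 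Substituting this into the Gale relation $M^{(i)}_{F}\,D\,\bigl(M^{(j)}_{\Gc(F)}\bigr)^{T}=0$ of~\eqref{TheGaleM}, and using that $J_{k+1}$, $J_{w+1}$ and the sign matrix $D$ commute up to an overall sign, one obtains the same relation for $R(F)$ and $R(\Gc(F))$; the uniqueness of the normalized representative in Proposition~\ref{EmbProp} then forces $\Gc(R(F))=R(\Gc(F))$, which is the desired commutation.
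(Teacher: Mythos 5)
The paper offers no written proof of this corollary: it is stated immediately after Proposition~\ref{Turn} and is meant to follow from the identification of $*$ with the reflection of the frieze together with the description of $\Gc$ as the exchange of the ``frieze of solutions'' with the ``frieze of coefficients''. Your plan follows exactly this route, and your computation of the coefficients of $*(\Gc(E))$ from~\eqref{DualREq}, namely $(\alpha^*)^j_i=\alpha^{w+1-j}_{i+w-j}$, is correct. The alternative route you sketch at the end, via the matrix pairing~\eqref{TheGaleM} and conjugation by anti-diagonal matrices, is also viable and is close in spirit to the description of $\Gc\circ*$ as the anti-involution $^\iota$ in Section~\ref{RePSS}.

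The gap is in the one step that carries all the content: the claim that the reflected frieze $R(F_E)$ has entry array $\beta^j_i=\alpha^{w+1-j}_{i+w-j}$. You justify the column shift $i\mapsto i+w-j$ by saying it is ``forced by the requirement that the reflected array again satisfy the boundary and $\SL_{w+1}$-conditions''. This cannot work as an argument: both the boundary conditions and the $\SL$-conditions are invariant under arbitrary horizontal translations of a frieze, so they determine the reflected frieze only up to a cyclic shift of the column index --- and a cyclic shift of the column index is exactly a cyclic relabelling of the coefficients of the corresponding equation, i.e., a genuinely different element of $\Ec_{w+1,n}$. Since the corollary asserts an equality of equations on the nose rather than up to shift, pinning down this shift \emph{is} the theorem; it has to be extracted from the normalization hidden in Proposition~\ref{DualDiag} (the renumbering $\mu'_i=\mu_{i+k+1}$) or from the determinantal formulas of Section~\ref{DeTS}, and neither your first route nor the matrix route actually does this. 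The same issue is visible in your own observation that $*\circ*$ shifts indices by $w-1$: a ``reflection'' whose square is a nontrivial shift is a glide reflection, so the amount of glide is data that must be computed, not inferred from invariance properties. The index substitution you assert is the right one, but as written the argument assumes precisely the identity that needs to be checked.
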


\subsection{The self-dual case}\label{SDS}

An interesting class of difference equations
and equivalently, of $\SL_{k+1}$-frieze patterns, is the class of
{\it self dual} equations.
In the case of frieze patterns, self-duality means invariance with
respect to the horizontal axis of symmetry.

\begin{example}
a) Every $\SL_{2}$-frieze pattern is self-dual.

b) Consider the following $\SL_{3}$-frieze patterns of width $2$:
$$
\begin{array}{rrrrrrrr}
1&&1&&1&&1&\\
&2&&2&&2&&2\\
2&&2&&2&&2&\\
&1&&1&&1&&1
\end{array}
\qquad\qquad
\begin{array}{rrrrrrrr}
1&&1&&1&&1&\\
&2&&3&&2&&3\\
1&&5&&1&&5&\\
&1&&1&&1&&1
\end{array}
$$
The first one is self-dual but the second one is not.
\end{example}

An $n$-gon is called {\it projectively self dual} if, for some fixed $0\leq\ell\leq{}n-1$,
the $n$-gon $(v_{i+\ell}^*)$ is projectively equivalent to $(v_i)$.
Note that $\ell$ is a parameter in the definition (so, more accurately, one should say 
``$\ell$-self dual''); see~\cite{FT}.

\section{The determinantal formulas}\label{DeTS}

In this section, we give explicit formulas for the Gale transform.
It turns out that one can solve the equation (\ref{REq})
and obtain explicit formulas for the coefficients of the $\SL_{k+1}$-frieze pattern.
Let us mention that the determinant formulas presented 
here already appeared in the classical literature on difference equations
in the context of ``Andr\'e method of solving difference equations'';
see \cite{And,Jor}.

\subsection{Calculating the entries of the frieze patterns}\label{DeTSOne}

Recall that we constructed an isomorphism between
the spaces of difference equations ${\mathcal  E}_{k+1,n}$ 
and frieze patterns ${\mathcal  F}_{k+1,n}$.
We associated an $\SL_{k+1}$-frieze pattern of width $w$
to every difference equation (\ref{REq}).
The entries~$d_{i,i+j}$ of the $\SL_{k+1}$-frieze pattern 
(also denoted by $\alpha_{i-1}^{w-j}$, see~(\ref{friezeGG}), (\ref{Corresp})) 
were defined non-explicitly by (\ref{ConstEq}).

\begin{proposition}
\label{DetExpr}
The entries of the $\SL_{k+1}$-frieze pattern 
associated to a difference equation~(\ref{REq}) are expressed 
in terms of the coefficients $a_i^j$
by the following determinants.

(i)
If $0\leq j \leq k-1$ and $j<w$, then
\begin{equation}
\label{DetEq}
d_{i,i+j}=
\left| 
\begin{array}{llllll}
a_{i}^1&1&\\[8pt]
a_{i+1}^2&a_{i+1}^1&1&\\[8pt]
\vdots&\ddots&\ddots&\;1\\[6pt]
a_{i+j}^{j+1}&\cdots&a_{i+j}^{2}&a_{i+j}^{1}
\end{array}
\right|.
\end{equation}

(ii)
If $k-1< j <w$, then
\begin{equation}
\label{DetEqDva}
d_{i,i+j}=
\left| 
\begin{array}{cccclcc}
a_{i}^1&1&\\[8pt]
\vdots&a_{i+1}^1&1&\\[4pt]
a_{i+k-1}^{k}&&\ddots&\ddots\\[6pt]
1&&&\ddots&\ddots\\[4pt]
&\ddots&&&a_{i+j-1}^1&1\\[8pt]
&&1&a_{i+j}^{k}&\ldots&a_{i+j}^1
\end{array}
\right|.
\end{equation}
\end{proposition}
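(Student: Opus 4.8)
The plan is to prove both formulas simultaneously by strong induction on $j$, checking that the proposed determinants obey the same recurrence and boundary values that define the frieze entries. Recall from Section~\ref{1stIso} that $d_{i,i+j}=V_{i+j}$, where $(V_s)$ is the solution of~\eqref{REq} with initial data $V_{i-1}=1$ and $V_{i-2}=\cdots=V_{i-k-1}=0$; see~\eqref{ConstEq}. Consequently $d_{i,i-1}=1$, the entries $d_{i,s}$ vanish for $i-k-1\le s\le i-2$, and, since $(d_{i,s})_s$ is a genuine solution of~\eqref{REq}, every entry satisfies
$$
d_{i,i+j}=\sum_{m=1}^{k}(-1)^{m-1}a_{i+j}^{m}\,d_{i,i+j-m}+(-1)^{k}\,d_{i,i+j-k-1}. \qquad (\star)
$$
First I would record that both matrices in~\eqref{DetEq} and~\eqref{DetEqDva} are the single $(j+1)\times(j+1)$ banded matrix $B=B_j$, with rows and columns indexed by $0,\dots,j$, given by $B_{r,c}=1$ on the superdiagonal $c=r+1$, by $B_{r,c}=a_{i+r}^{\,r-c+1}$ on the band $r-k+1\le c\le r$, by an additional $B_{r,c}=1$ on the diagonal $c=r-k$, and $0$ elsewhere. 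For $j\le k-1$ the additional diagonal lies outside the matrix and one recovers the Hessenberg shape~\eqref{DetEq}; for $j\ge k$ one gets~\eqref{DetEqDva}. The base case $j=0$ is $B_0=a_i^1=d_{i,i}$, as in Example~\ref{HillEx}.

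For the inductive step I would expand $\det B_j$ along its last row, whose nonzero entries lie in the columns $c$ with $\max(0,\,j-k)\le c\le j$, giving $\det B_j=\sum_{c}(-1)^{j+c}B_{j,c}\,\tilde N_c$, where $\tilde N_c$ is the minor obtained by deleting the last row and column $c$. The key computation is the identification $\tilde N_c=d_{i,i+c-1}$. To see this, partition the rows of $\tilde N_c$ into $\{0,\dots,c-1\}$ and $\{c,\dots,j-1\}$ and its columns into $\{0,\dots,c-1\}$ and $\{c+1,\dots,j\}$. The top-right block vanishes, since $c'\ge c+1>r+1$ for $r\le c-1$ forces an entry strictly above the superdiagonal; hence the minor is block lower-triangular and factors as the product of its two diagonal blocks. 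The bottom-right block is lower-triangular with $1$'s on its diagonal (these are superdiagonal entries of $B$), so its determinant is $1$, while the top-left block is the $c\times c$ principal submatrix of $B$, i.e.\ the matrix of the same family for index $c-1$, which equals $d_{i,i+c-1}$ by the induction hypothesis (with the convention $d_{i,i-1}=1$ for $c=0$).

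Substituting this and reindexing by $m=j-c$ turns the cofactor signs $(-1)^{j+c}$ into $(-1)^{m}$ and the last-row entries $B_{j,j-m}$ into $a_{i+j}^{m+1}$ for $m\le k-1$ and into the extra diagonal $1$ for $m=k$; after the shift $m\mapsto m+1$ the expansion becomes exactly $(\star)$, the leftmost $1$ of the last row producing the terminal term $(-1)^{k}d_{i,i+j-k-1}$ (for $j\le k-1$ this same position instead yields $(-1)^{j}a_{i+j}^{j+1}$ via column $c=0$ and $d_{i,i-1}=1$). This identifies $\det B_j$ with $d_{i,i+j}$ and closes the induction. The one point demanding care is the sign bookkeeping: one must verify that the alternating cofactor signs combine with the banded superscripts to reproduce precisely the signs of~\eqref{REq}, and that the two boundary features---the superdiagonal $1$'s responsible for the trivial triangular blocks, and the extra diagonal of $1$'s responsible for the last term of $(\star)$---are each counted exactly once.
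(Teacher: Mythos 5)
Your proof is correct, but it takes a genuinely different route from the paper's. The paper obtains \eqref{DetEq} and \eqref{DetEqDva} as a byproduct of the Gale transform: it writes the recurrence~\eqref{TheDualEq} satisfied by the length-$(w+1)$ South--East diagonals of the Gale-dual frieze $\Gc(F)$, solves for the coefficient $\alpha_{i-1}^{w-j}$ by Cramer's rule (formula~\eqref{CramEq}), uses the $\SL_{w+1}$-condition to set the denominator equal to $1$, and expands the numerator along one column. Your argument is instead entirely self-contained: you use only the defining relation $d_{i,j}=V_j$ from~\eqref{ConstEq}, package both determinants as a single banded matrix $B_j$, and verify by strong induction that $\det B_j$ obeys the recurrence $(\star)$ and the boundary values $d_{i,i-1}=1$, $d_{i,s}=0$ for $i-k-1\le s\le i-2$. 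The key step --- Laplace expansion along the last row combined with the block lower-triangular factorization of each minor $\tilde N_c$, whose bottom-right block is unitriangular and whose top-left block is $B_{c-1}$ --- is sound, and your sign bookkeeping checks out: for $j\ge k$ the expansion reproduces $(\star)$ verbatim, and for $j\le k-1$ the terms of $(\star)$ with $m\ge j+2$ and the terminal term vanish by the zero initial conditions, leaving exactly your truncated sum (a point you state at the outset but could invoke more explicitly in the inductive step). What each approach buys: the paper's proof is shorter but leans on Theorem~\ref{TheGailThm} and Proposition~\ref{GDFprop}, and it simultaneously exhibits the entries as minors of the dual frieze, which is what Section~\ref{DeTSFour} then inverts; your proof is more elementary, independent of the Gale machinery, and shows directly that the determinant formulas remain valid for all $j\ge 0$ (in particular for $j\ge w$, where they encode the (anti)periodicity constraints used in Section~\ref{TriThmS}).
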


\begin{proof}
We use the  Gale transform $\Gc(F)$ of the frieze $F$ associated to \eqref{REq}.
In $\Gc(F)$, the following diagonals of length $w+1$
$$
\begin{blockarray}{cccccccccccccc}
\eta_{i-w-2} & \eta_{i-w-1} &  &    \eta_{i-w+j}&&\eta_{i-2}& \eta_{i-1} & \\[10pt]
\begin{block}{(ccccccccccccc)c}
a_{i}^1&1&\\[8pt]
a_{i+1}^2&a_{i+1}^1&\ddots&\\[8pt]
\vdots&\vdots&&\;1\\[6pt]
a_{i+j}^{j+1}&a_{i+j}^{j}&&a_{i+j}^{1}&\ddots\\
\vdots&\vdots& &  \vdots& &1&\\[4pt]
\vdots&\vdots&&\vdots&&a_{i+w}^1&1\\[6pt]
\end{block}
\end{blockarray}\;,
$$
satisfy the recurrence relation 
$$
\eta_{i-1}=\alpha_{i-1}^1\eta_{i-2}-\ldots +(-1)^{w-j-1}\alpha_{i-1}^{w-j}\eta_{i-w+j-1}+\ldots
 +(-1)^{w-1}\ \alpha_{i-1}^{w}\eta_{i-w-1} +(-1)^{w}\ \eta_{i-w-2},
$$
that can be written in terms of vectors and matrices as
$$
\eta_{i-1}=
\left(
\begin{array}{ccc}
\eta_{i-w-2}, &\ldots, & \eta_{i-2}
\end{array}\right)
\left(
\begin{array}{l}
(-1)^{w}\\
(-1)^{w-1}\alpha_{i-1}^w\\
\vdots\\
(-1)^0\alpha_{i-1}^1
\end{array}\right).
$$
The coefficients $\alpha_{i-1}^{w-j}$ can be computed using the Cramer rule
\begin{equation}
\label{CramEq}
\alpha_{i-1}^{w-j}=\dfrac{(-1)^{w-j-1}|\eta_{i-w-2}, \ldots, \eta_{i-w+j}, \;
\eta_{i-1}, \;\eta_{i-w+j+2}, \ldots, \eta_{i-2}|}{|\eta_{i-w-2}, \ldots,  \eta_{i-2}|}.
\end{equation}
The denominator is 1 since $\Gc(F)$ is an $\SL_{w+1}$-frieze, and the numerator simplifies to 
\eqref{DetEq} or to~\eqref{DetEqDva} accordingly after decomposing by the $\eta_{i-1}$-th column. 
The coefficient $\alpha_{i-1}^{w-j}$ is in position~$d_{i,i+j}$ in the frieze $F$.
\end{proof}

\subsection{Equivalent formulas}\label{DeTSTri}

There is another, alternative, way to calculate the entries
of the $\SL_{k+1}$-frieze pattern.

\begin{proposition}
\label{DetExprDva}
(i) If $j+k\geq w$ then
$$
d_{i,i+j}=
\left| 
\begin{array}{ccrrc}
a_{i-w+j-1}^k&a_{i-w+j-1}^{k-1}&\ldots&\ldots&a_{i-w+j-1}^{k-w+j+1}\\[8pt]
1&a_{i-w+j}^k&\ldots&\ldots&a_{i-w+j}^{k-w+j}\\[8pt]
&1&&&\vdots\\[8pt]
&&\ddots&\ddots&\vdots\\[12pt]
&&&\;1&a_{i-2}^k\\[8pt]
\end{array}
\right|.
$$

(ii)
If $j+k<w$ then
\begin{equation*}
\label{DetEqBis}
d_{i,i+j}=
\left| 
\begin{array}{cccccc}
a_{i-w+j-1}^k&\ldots&a_{i-w+j-1}^1&\;\;1&&\\[8pt]
1&a_{i-w+j}^k&\ldots&a_{i-w+j}^1&\!\!1&\\[8pt]
&1&\ldots&\ldots&&\!\!1\\[8pt]
&&\ddots&&\ddots&\vdots\\[12pt]
&&&\;1&a_{i-3}^k&a_{i-3}^{k-1}\\[8pt]
&&&&\!\!\!\!1&a_{i-2}^k
\end{array}
\right|.
\end{equation*}
\end{proposition}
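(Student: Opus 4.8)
The plan is \emph{not} to recompute anything from scratch, but to deduce Proposition~\ref{DetExprDva} from the already-established Proposition~\ref{DetExpr} by feeding the latter the \emph{projective dual} equation~\eqref{DualREq} and then transporting the output back through the combinatorial projective duality of Proposition~\ref{Turn}. The point is that Proposition~\ref{DetExpr} already computes every frieze entry as a determinant in the coefficients; the two propositions are two readings of the same computation, one for ``shallow'' entries and one for ``deep'' entries, and projective duality exchanges these two depths.

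First I would record that the projective dual of~\eqref{REq} is again an order $k+1$ equation lying in $\Ec_{k+1,n}$, whose coefficients are obtained from the $a_i^j$ by the relabelling $(a^*)^m_r=a^m_{r+k-m}$ read off from~\eqref{DualREq}. By Proposition~\ref{Turn}, the $\SL_{k+1}$-frieze pattern attached to this dual equation is exactly the reflection of $F$ across the median horizontal axis. That reflection reverses the $w$ nontrivial rows, so the entry $d_{i,i+j}$ of $F$, which lies in the row at depth $j$ along its diagonal, is identified with an entry $d^*_{p,p+q}$ of the dual frieze at depth $q=w-1-j$, where $p$ is an explicit affine function of $i$ and $j$ fixed by matching vertical positions in the periodic strip.

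The mechanism that makes the two cases line up is the equivalence of hypotheses: $j+k\ge w$ is the same as $q=w-1-j\le k-1$, the hypothesis of Proposition~\ref{DetExpr}(i), while $j+k<w$ is the same as $k-1<q<w$, the hypothesis of Proposition~\ref{DetExpr}(ii). Accordingly, I would apply Proposition~\ref{DetExpr}(i) (resp.~(ii)) to the dual equation to write $d^*_{p,p+q}$ as a $(q+1)\times(q+1)=(w-j)\times(w-j)$ determinant in the dual coefficients $(a^*)^m_r$, and then substitute $(a^*)^m_r=a^m_{r+k-m}$. After this substitution and the index identification coming from the reflection, the determinant should collapse onto the one displayed in Proposition~\ref{DetExprDva}(i) (resp.~(ii)), with subscripts running over the stated window $i-w+j-1,\ldots,i-2$.

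I expect the only genuine difficulty to be the bookkeeping. One must pin down $p$ and verify that the reflection, which reverses the order of the rows, converts the lower-Hessenberg band of Proposition~\ref{DetExpr} (with $1$'s on the superdiagonal and superscripts increasing down the first column) into the band of Proposition~\ref{DetExprDva} (with $1$'s on the subdiagonal and superscripts decreasing along the first row); this is \emph{not} a plain transposition, and the relabelling $(a^*)^m_r=a^m_{r+k-m}$ has to be pushed through carefully so that the subscripts land in the right place. As a self-contained alternative that avoids invoking Proposition~\ref{Turn}, one can instead imitate the proof of Proposition~\ref{DetExpr} verbatim but select the \emph{complementary} window of diagonals $\eta_{i-1},\eta_i,\ldots,\eta_{i+w}$ of $\Gc(F)$ and solve the recurrence for $\alpha_{i-1}^{w-j}$ from the opposite end; Cramer's rule then yields the shifted determinant directly, with the same sign and index verifications forming the crux of the argument.
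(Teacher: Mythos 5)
Your proposal is correct in outline but takes a genuinely different route from the paper's. The paper's entire proof of Proposition~\ref{DetExprDva} is the remark that the formulas ``are obtained in the same way as~\eqref{DetEq} and~\eqref{DetEqDva}'', i.e.\ one reruns the Cramer's-rule computation~\eqref{CramEq} on the window of diagonals of $\Gc(F)$ and expands the numerator along the complementary block --- which is exactly the ``self-contained alternative'' you mention at the end. Your primary route --- apply Proposition~\ref{DetExpr} to the projective dual equation and transport back through the reflection of Proposition~\ref{Turn} --- does work: the hypotheses match as you say ($j+k\ge w$ iff $w-1-j\le k-1$), the sizes agree (both determinants are $(w-j)\times(w-j)$), and the band of Proposition~\ref{DetExpr} evaluated on the dual coefficients turns out to be the \emph{transpose} of the band in Proposition~\ref{DetExprDva}, so the determinants coincide. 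What this buys is that no new computation is needed; what it costs is a forward reference to Proposition~\ref{Turn}, which the paper only proves later (via Proposition~\ref{DualDiag} and~\eqref{DuDeT}) --- you should note explicitly that this creates no circularity, since that proof rests on tameness and~\eqref{CramEq}, not on Proposition~\ref{DetExprDva}.

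Two concrete points need fixing before the argument closes. First, your relabelling of the dual coefficients is wrong as written: reading~\eqref{DualREq}, the coefficient of $V^*_{i-m}$ is $(-1)^{m-1}a^{k+1-m}_{i+k-m}$, so $(a^*)^m_r=a^{k+1-m}_{r+k-m}$, not $a^m_{r+k-m}$ (check against Example~\ref{DeEx}, where $(a^*)^1_i=b_{i+1}=a^2_{i+1}$). The superscript flip $m\mapsto k+1-m$ is essential: it is what makes the superscripts decrease along the first row of the resulting determinant, and with your formula the substitution does not produce the displayed matrix. Second, the shift $p$ must actually be pinned down: matching the $(u+1,v+1)$ entries $a^{k-(v-u)}_{p+k-1+u}$ of the transposed band against $a^{k-(v-u)}_{i-w+j-1+u}$ forces $p=i+j+2-n$, and one must then verify (using $n$-periodicity of both friezes) that the reflection really identifies $d_{i,i+j}$ with $d^*_{p,\,p+w-1-j}$ for this $p$. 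Neither is a conceptual obstacle, but both sit precisely in the ``bookkeeping'' you defer, and the first one as currently stated would derail the computation.
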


\begin{proof}
These formulas are obtained in the same way as~(\ref{DetEq}) and (\ref{DetEqDva}).
\end{proof}

\begin{example}
\label{HillEx2}
{\rm
(a)
Hill's equations $V_{i}=a_{i}V_{i-1}-V_{i-2}$ with antiperiodic solutions correspond to
Coxeter's  frieze patterns with the entries
$$
d_{i,i+j}=
\left|
\begin{array}{cccccc}
a_{i}&1&&&\\[4pt]
1&a_{i+1}&1&&\\[4pt]
&\ddots&\ddots&\ddots&\\[4pt]
&&1&a_{i+j-2}&1\\[4pt]
&&&1&a_{i+j-1}
\end{array}\right|.
$$
The corresponding geometric space is the moduli space (see Theorem \ref{TriThm})
of $n$-gons in the projective line known (in the complex case) 
under the name of moduli space $\mathcal{M}_{0,n}$.

(b)
In the case of third-order difference equations,
$V_{i}=a_{i}V_{i-1}-b_iV_{i-2}+V_{i-3}$,
we have:
$$
d_{i,i+j}=
\left|
\begin{array}{llllll}
a_{i}&b_{i+1}&1&&&\\[4pt]
1&a_{i+1}&b_{i+2}&1&&\\[4pt]
&\;\ddots&\; \ddots& \;\ddots&\; \ddots&\\[4pt]
&&1&a_{i+j-3}&b_{i+j-3}&1\\[4pt]
&&&1&a_{i+j-2}&b_{i+j-2}\\[4pt]
&&&&1&a_{i+j-1}
\end{array}
\right|.
$$
This case is related to the moduli space $\Cc_{3,n}$ of $n$-gons
in the projective plane studied in~\cite{MGOT}.
}
\end{example}

\subsection{Determinantal formulas for the Gale transform}\label{DeTSFour}

Formulas from Propositions~\ref{DetExpr} and~\ref{DetExprDva} 
express the entries of the $\SL_{k+1}$-frieze pattern~(\ref{friezeGG})
as minors of the Gale dual $\SL_{w+1}$-frieze pattern~(\ref{friezeG}).
One can reverse the formula and express the entries of~(\ref{friezeG}),
i.e., the coefficients of the equation~(\ref{REq}) as minors of
the $\SL_{k+1}$-frieze pattern~(\ref{friezeGG}). 

For instance the Gale dual of~\eqref{CramEq}
is
\begin{equation}
\label{DuDeT}
a_{i-1}^{k-j}=
\dfrac{(-1)^{k-j-1}|\eta_{i-k-2}, \ldots, \eta_{i-k+j}, \;
\eta_{i-1}, \;\eta_{i-k+j+2}, \ldots, \eta_{i-2}|}{|\eta_{i-k-2}, \ldots,  \eta_{i-2}|},
\end{equation}
where $\eta$'s are the South-East diagonals of the $\SL_{k+1}$-frieze pattern~\eqref{friezeGG}.
Note that the denominator is equal to $1$.

For $j<k$ we obtain an analog of the formula~\eqref{DetEq}:
\begin{equation}
\label{DuDeTBis}
a_{i-1}^{k-j}=
\left| 
\begin{array}{cccc}
d_{i+1,i+w}&1&\\[8pt]
\vdots&\ddots&\;1\\[6pt]
d_{i+j+1,i+w}&\cdots&d_{i+j+1,i+j+w}
\end{array}
\right|,
\end{equation}
and similarly for~\eqref{DetEqDva}.

\subsection{Frieze patterns and the dual equations}\label{DeTSTwo}

\begin{proposition}
\label{DualDiag}
The North-East diagonals $(\mu_i)$ of
the $\SL_{k+1}$-frieze pattern corresponding to
the difference equation (\ref{REq}) satisfy the projectively dual
difference equation~(\ref{DualREq}).
\end{proposition}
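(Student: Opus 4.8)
The goal is to show that the vector sequence $(\mu_i)_{i\in\Z}$ of North-East diagonals is a solution of the dual equation~(\ref{DualREq}), i.e. that for every fixed column $j$ the entries $(d_{i,j})_i$ obey~(\ref{DualREq}) in the index $i$. The plan is to reinterpret each $\mu_i$ as a linear functional on the vertices of the associated polygon and then to read off~(\ref{DualREq}) from the projective duality already established in Proposition~\ref{DualProp}.

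I would start from the embedding matrix $M^{(i_0)}_F$ of Notation~\ref{NotNot}. Its columns are the South-East diagonals $\eta_j$, which constitute a lift $V_j\in\R^{k+1}$ of the polygon and satisfy the original equation~(\ref{REq}); its rows are the North-East diagonals $\mu_{i_0},\dots,\mu_{i_0+k}$, a basis of the $(k+1)$-dimensional solution space $S$ of~(\ref{REq}). By the construction of Section~\ref{1stIso} every $\mu_i$ lies in $S$, so there is a unique covector $\Lambda_i\in(\R^{k+1})^*$ with
$$
d_{i,j}=\langle\Lambda_i,V_j\rangle\qquad\text{for all }j .
$$
The assignment $\mu_i\mapsto\Lambda_i$ is exactly the coordinate isomorphism $S\xrightarrow{\ \sim\ }(\R^{k+1})^*$ attached to the basis $\mu_{i_0},\dots,\mu_{i_0+k}$; being a fixed linear isomorphism, it carries any scalar linear recurrence satisfied by the $\mu_i$ to the same recurrence for the $\Lambda_i$ and conversely. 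Hence it suffices to produce the recurrence for $(\Lambda_i)$.

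Next I would identify $\Lambda_i$ with a dual vertex. From Section~\ref{1stIso} we have $d_{i,i+w}=1$ and $d_{i,i+w+1}=\dots=d_{i,i+n-2}=0$, the latter being exactly $k$ consecutive vanishing columns. Thus $\Lambda_i$ annihilates the $k$ consecutive vertices $V_{i+w+1},\dots,V_{i+w+k}$ and satisfies $\langle\Lambda_i,V_{i+w}\rangle=1$. These are precisely the conditions defining a multiple of the dual vertex $V^*_{i+w+1}$ of Proposition~\ref{DualProp}, for which $\langle V^*_{m},X\rangle=\det(V_m,\dots,V_{m+k-1},X)$. The proportionality constant is independent of $i$: by~(\ref{detconst}),
$$
\langle V^*_{i+w+1},V_{i+w}\rangle=\det\!\left(V_{i+w+1},\dots,V_{i+w+k},V_{i+w}\right)=(-1)^k\,\mathrm{Const},
$$
so that $\Lambda_i=\big((-1)^k\,\mathrm{Const}\big)^{-1}V^*_{i+w+1}$. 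Since Proposition~\ref{DualProp} asserts that $(V^*_m)$ solves~(\ref{DualREq}), the sequence $(\Lambda_i)$ — a fixed multiple of $(V^*_m)$ re-indexed by $w+1$ — solves the correspondingly re-indexed equation, and transporting this back through the coordinate isomorphism yields the recurrence for $(\mu_i)$.

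The step I expect to require the most care is the alignment of indices. Substituting $m=i+w+1$ into~(\ref{DualREq}) and using $w+k+1=n-1$ together with the $n$-periodicity of the coefficients, one finds that $(\mu_i)$ satisfies
$$
\mu_i=\sum_{l=1}^{k}(-1)^{l-1}a^{\,k-l+1}_{\,i-l-1}\,\mu_{i-l}+(-1)^k\mu_{i-k-1}.
$$
This is exactly~(\ref{DualREq}) evaluated along the shifted sequence $\nu_i:=\mu_{i+k+1}$; equivalently it is~(\ref{DualREq}) up to the global index shift $i\mapsto i+k+1$, the ``invisible'' shift accompanying $*\circ*$ noted in Section~\ref{DualSect}, so that Proposition~\ref{DualDiag} holds once the labelling of the diagonals $\mu_i$ is fixed to absorb it. As an independent and more computational check I would run Cramer's rule directly: writing $\mu_i=\sum_{l=1}^{k+1}c_l\,\mu_{i-l}$ and evaluating the $(k+1)\times(k+1)$ determinants on the boundary window $j=i+w,\dots,i+n-2$ — where the windowed diagonals are anti-triangular with $(-1)^k$ on the anti-diagonal — gives $c_{k+1}=(-1)^k$ at once, while the intermediate minors collapse, by the determinantal formulas of Proposition~\ref{DetExpr}, to the single entries $(-1)^{l-1}a^{\,k-l+1}_{\,i-l-1}$, in agreement with the conceptual argument.
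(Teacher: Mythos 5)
Your proof is correct, but it takes a genuinely different route from the one in the paper. The paper's argument stays entirely inside the frieze: it writes down a $(k+2)\times(k+2)$ determinant anchored on the bounding row of $1$'s, uses tameness to make it vanish, expands along the first column to obtain the recurrence
$d_{i,j}=a_{i-2}^k d_{i-1,j}-a_{i-3}^{k-1}d_{i-2,j}+\cdots+(-1)^{k-1}a_{i-k-1}^1 d_{i-k,j}+(-1)^k d_{i-k-1,j}$
(identifying the cofactors with the coefficients via the determinantal formula \eqref{DuDeT}), and then propagates this relation to all $j$ by sliding the vanishing $(k+2)\times(k+2)$ minor; the renumbering $\mu'_i=\mu_{i+k+1}$ appears at the end exactly as in your argument. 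You instead realize each North-East diagonal as a covector $\Lambda_i$ on the solution space, use the $k$ consecutive zeros $d_{i,i+w+1}=\cdots=d_{i,i+w+k}=0$ together with $d_{i,i+w}=1$ to pin down $\Lambda_i$ as a fixed (i.e.\ $i$-independent) multiple of the dual vertex $V^*_{i+w+1}$, and then import the recurrence from Proposition~\ref{DualProp}; your index bookkeeping ($w+k=n-2$ plus $n$-periodicity of the coefficients) is right and lands on the same shifted form of \eqref{DualREq}. There is no circularity here, since the paper proves Proposition~\ref{DualProp} independently of Proposition~\ref{DualDiag}. What your route buys is conceptual clarity: it exhibits the North-East diagonals as a rescaled, re-indexed copy of the dual polygon, which makes Proposition~\ref{Turn} geometrically transparent rather than a computation. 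What it costs is that the computational content is not removed but displaced into Proposition~\ref{DualProp}, whose proof the paper only sketches as ``a direct verification,'' whereas the tameness argument is self-contained within the frieze formalism and produces the explicit cofactor expressions for the dual coefficients as a byproduct.
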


\begin{proof}
This is a consequence of the tameness of the $\SL_{k+1}$-frieze pattern corresponding to \eqref{REq}
and can be easily showed using the explicit determinantal formulas \eqref{DuDeT} that will be 
 established in the next section.
We  first consider the case $j=i+w-k-2$.
We know that $(k+2)\times (k+2)$-determinant vanishes, thus
$$
\left| 
\begin{array}{ccccc}
d_{j-w+1,j}&1&\\[8pt]
d_{j-w+2,j}&d_{j-w+2,j+1}&1\\[12pt]
\vdots&&\ddots&\;1\\[6pt]
d_{i,j}&\cdots&&d_{i,i+w-1}
\end{array}
\right|=0.
$$
Decomposing the determinant by the first column gives the recurrence relation
$$
d_{i,j}=a_{i-2}^kd_{i-1,j}-a_{i-3}^{k-1}d_{i-1,j}+ 
\cdots+(-1)^{k-1}a_{i-k-1}^1d_{i-k,j}+(-1)^{k}d_{i-k-1,j},
$$
where the coefficients are computed using \eqref{DuDeT}.
The recurrence relation then propagates inside the frieze due to the tameness property, and 
hence will hold for all $j$. 
This proves that the North-East diagonals satisfy \eqref{DualREq} after renumbering $\mu'_i:=\mu_{i+k+1}$.
\end{proof}

\section{The Gale transform and the representation theory}\label{RePSS}

In this section, we give another description of the combinatorial Gale transform
$\Gc$ in terms of representation theory of the Lie group $\SL_{n}$.
Let $N\subset\SL_{n}$ be the subgroup of upper unitriangular matrices.
$$
A=\left(
\begin{array}{rccl}
1&*&\ldots&*\\
&\ddots&\ddots&\vdots\\
&&\ddots&*\\
&&&1
\end{array}
\right).
$$ 
We will associate a unitriangular $n\times{}n$-matrix 
to every $\SL_{k+1}$-frieze pattern.
This idea allows us to apply in our situation many tools of the theory of matrices,
as well as more sophisticated tools of representation theory.
We make just one small step in this direction: 
we show that the combinatorial Gale transform $\Gc$ 
coincides with the restriction of the anti-involution on $N$ 
introduced in~\cite{BFZ}.

\subsection{From frieze patterns to unitriangular matrices}

Given an $\SL_{k+1}$-frieze pattern $F$ of width $w$,
as in formula~(\ref{FREq}),
cutting out a piece of the frieze,
see Figure~\ref{CutFig},
we associate to $F$ a unitriangular $n\times n$-matrix 
\begin{equation}
\label{TheMA}
A_F=\left(
\begin{array}{rcccccc}
1&d_{1,1}&\cdots&d_{1,w}&1&&\\
&\ddots&\ddots&&\ddots&\ddots&\\
&&\ddots&\ddots&&\ddots&1\\
&&&\ddots&\ddots&&d_{n-w,n-1}\\
&&&&\ddots&\ddots&\vdots\\
&&&&&\ddots&d_{n-1,n-1}\\[4pt]
&&&&&&1
\end{array}
\right)
\end{equation}
with $w+2$ non-zero diagonals.
As before, $n=k+w+2$.
\begin{figure}[hbtp]
\input{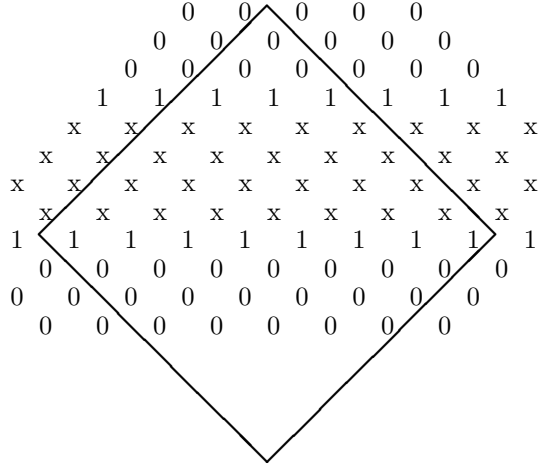}
\caption{Cutting a unitriangular matrix out of a frieze.}
\label{CutFig}
\end{figure}

\begin{remark}
Note that the matrix $A_F$ contains all the
 information about the $\SL_{k+1}$-frieze pattern.
Indeed, the frieze pattern can be reconstructed by even a much smaller 
$(k+1)\times{}n$-matrix (\ref{TheEmbM}).
However, $A_F$ is not defined uniquely.
Indeed, it depends on the choice of the first element~$d_{1,1}$
in the first line of the frieze.
A different cutting gives a different matrix.
\end{remark}

\subsection{The combinatorial Gale transform as an anti-involution on $N$}
Denote by $^{\iota}$ the anti-involution of $N$ defined for $x\in{}N$ by 
\begin{equation}
\label{IoT}
x^\iota=Dx^{-1}D,
\end{equation}
where $D$ is the diagonal matrix~(\ref{DDiagM}).
Recall that the term ``anti-involution'' means an involution that is an anti-homomorphism,
i.e., $(xy)^\iota=y^\iota{}x^\iota$.
This anti-involution was introduced in~\cite{BFZ} in order to study
the canonical parametrizations of $N$.
We will explain the relation of $^\iota$ to the classical representation theory in Section~\ref{ClSec}. 

\begin{theorem}
(i)
The operation $A_F\mapsto (A_F)^{\iota}$
associates to a matrix~(\ref{TheMA}) of a tame $\SL_{k+1}$-frieze pattern
a matrix of a tame $\SL_{w+1}$-frieze pattern.

(ii)
The corresponding map 
$$
^\iota:\Ec_{k+1,n}\to\Ec_{w+1,n}
$$
coincides with the composition of the Gale transform and
the projective duality: 
$$
^\iota=\Gc\circ*.
$$
\end{theorem}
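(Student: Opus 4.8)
The plan is to read the statement as an identity between two explicit unitriangular matrices and to prove it by first pinning down the band structure of $A_F^\iota$ and then matching its entries. Recall from Proposition~\ref{DualDiag} that the North-East diagonals $(\mu_i)$, which are exactly the rows of the cut-out matrix $A_F$ in~\eqref{TheMA}, satisfy the projectively dual equation~\eqref{DualREq}; this is a linear recurrence of order $k+1$ relating each row of $A_F$ to the previous $k+1$ rows. First I would use this recurrence, valid in the interior of the frieze, to show that $A_F^{-1}$ is banded with bandwidth $k+1$: although a generic unitriangular matrix of bandwidth $w+1$ has a completely full inverse, the order-$(k+1)$ recurrence among the rows of $A_F$ forces the back-substitution defining $A_F^{-1}$ to terminate within $k+1$ steps of the diagonal. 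One should check the smallest case $k=1$ as a sanity test: a width-$w$ Coxeter frieze has an almost full $A_F$, yet the $\SL_2$-recurrence of order $2$ makes $A_F^{-1}$ of bandwidth $2=k+1$.

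Since conjugation by the diagonal matrix $D$ of~\eqref{DDiagM} preserves both band structure and unitriangularity, $A_F^\iota=DA_F^{-1}D$ is unitriangular with exactly $k+2$ nonzero diagonals, hence a candidate for the matrix of an $\SL_{w+1}$-frieze $G$ of width $k$. The role of the $D$-conjugation is precisely to absorb the alternating signs produced by~\eqref{DualREq}: inverting introduces the sign pattern of the dual recurrence, and multiplying by $D$ on both sides restores all the defining minors to $+1$, so that $A_F^\iota$ is an honest $\SL_{w+1}$-frieze matrix rather than a signed variant. This yields part~(i); tameness of $G$ then comes for free, since by Theorem~\ref{TheGailThm} the Gale dual of a tame frieze is tame.

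For part~(ii) I would identify $G$ explicitly. The cleanest route is to verify the matrix identity $A_F\,D\,A_{G}\,D=\mathrm{Id}$ using the orthogonality relation of Proposition~\ref{FrGale}, namely $M^{(i)}_F\,D\,{M^{(j)}_{\Gc(F)}}^T=0$ with $i-j\equiv w+1$. The rows of $A_F$ and of the Gale-dual matrix span $D$-orthogonal complementary subspaces, which is exactly the statement that $DA_{G}D$ inverts $A_F$; reading off the entries through the determinantal formulas of Proposition~\ref{DetExpr} confirms that the resulting frieze is the Gale dual. The one subtlety is that~\eqref{TheGaleM} features a transpose, whereas $\iota$ has none: this discrepancy is resolved by the median reflection, which by Proposition~\ref{Turn} is projective duality. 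Thus the frieze $G$ is not $\Gc(F)$ but its reflection, which by the commutation $*\circ\Gc=\Gc\circ*$ equals $\Gc\circ *$ applied to $F$, and a final check of the index shift $i-j\equiv w+1$ against the cutting in~\eqref{TheMA} confirms the composition is exactly $\Gc\circ*$.

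The main obstacle is the sign and index bookkeeping in part~(ii): disentangling the transpose in~\eqref{TheGaleM} from the transpose-free $\iota$ is what forces the appearance of the projective-duality factor, and keeping track of the alternating signs carried by $D$ together with the cyclic shift $i-j\equiv w+1$ is where the argument is most delicate. Everything else reduces to the band-structure collapse of the inverse and to the orthogonality already recorded in Propositions~\ref{DualDiag} and~\ref{FrGale}.
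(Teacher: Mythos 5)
Your overall strategy --- prove that $A_F^{-1}$ collapses to a band matrix because of the linear recurrences among the lines of $A_F$, then identify the band entries via the $D$-orthogonality of Proposition~\ref{FrGale} --- is genuinely different from the paper's proof, which instead computes each entry of $A_F^\iota$ directly from the complementary-minor identity $\Delta_{i,j}(x^{\iota})=\Delta_{[i,j-1],[i+1,j]}(x)$ and then recognizes the resulting $(j-i)\times(j-i)$ band determinant as the entry $d_{j,i+k}$ of $\Gc(F)$ using the determinantal formulas of Section~\ref{DeTS} (this single computation delivers both the band structure of part~(i) and the identification of part~(ii) at once). Your route is viable, but as written it has a real gap in part~(i). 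The recurrence of Proposition~\ref{DualDiag} is satisfied by the \emph{bi-infinite} diagonals $\mu_i$, not by their truncations to the $n\times n$ window that actually form the rows of $A_F$: if every row of $A_F$ from the $(k+2)$-nd on were literally a linear combination of the preceding $k+1$ rows, the matrix would be singular. The recurrence fails at exactly one entry per truncated row (where the zero-padding of the cut-out replaces what would be the start of the next period of the diagonal), and it is precisely these defect entries --- not the recurrence itself --- that generate the nonzero band of $A_F^{-1}$. So ``the recurrence forces back-substitution to terminate'' does not close the argument; one must locate and compute the defects. (Done for the \emph{columns}, which satisfy~(\ref{REq}), this repair actually yields the entries of $A_F^{-1}$ explicitly as the coefficients $\pm a^s_j$, which would be a clean alternative proof of both parts --- but that computation is the content you have omitted.)

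The same truncation issue infects part~(ii): Proposition~\ref{FrGale} asserts $D$-orthogonality of \emph{full} periodic diagonals (the rows of the $(k+1)\times n$ and $(w+1)\times n$ matrices range over an entire period), whereas the rows of $A_F$ and the columns of your candidate $A_G$ are cut off by the window. The wrapped-around pieces happen not to overlap for the strictly upper-triangular entries of the product, so the identity $A_F\,D\,A_G\,D=\mathrm{Id}$ does hold (the diagonal $1$'s being automatic from unitriangularity), but this support analysis is exactly the ``delicate bookkeeping'' you defer, and the final step --- ``reading off the entries through the determinantal formulas of Proposition~\ref{DetExpr} confirms that the resulting frieze is the Gale dual'' of the reflected frieze --- is not a confirmation of an already-established fact but is the entire computation; it is what the paper's proof actually carries out. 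Your instinct that the transpose in~(\ref{TheGaleM}) versus the absence of a transpose in $\iota$ is what forces the extra factor of projective duality (Proposition~\ref{Turn}) is correct.
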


\begin{proof}
Let us denote by $S=\{1, 2,\ldots, n\}$ the index set of the rows, resp. columns, 
of a matrix~$x\in N$.
For two subsets  $I,J\subset{}S$ of the same cardinality we denote by 
$\Delta_{I,J}(x)$ the minor of the matrix  $x$ 
taken over the rows of indices in $I$ and
the columns of indices in $J$.
We have the following well-known relation
$$
\Delta_{i,j}(x^{\iota})=\Delta_{S-\{ j\}, S-\{i\}}(x),
$$
where $\Delta_{i,j}(x^{\iota})$ is simply the entry in position $(i,j)$ in $x^{\iota}$.
Taking into  account that the matrix $x$ belongs to $N$, 
whenever $j> i$, the $(n-1)\times (n-1)$-minor in the above right hand side 
simplifies to a $(j-i)\times (j-i)$-minor
$$
\Delta_{i,j}(x^{\iota})=\Delta_{[ i,j-1], [i+1,j]}(x),
$$
where $[a,b]$ denotes the interval $\{a,a+1,\ldots,b\}$.
Let us use this relation to compute the entry in position $(i,j)$ in $(A_F)^{\iota}$.
One has:
$$
\Delta_{i,j}(A_F^{\iota})=
\left| 
\begin{array}{ccrrc}
d_{i,i}&d_{i,i+1}&\ldots&\ldots&d_{i,j-1}\\[8pt]
1&d_{i+1,i+1}&\ldots&\ldots&d_{i+1,j-1}\\[8pt]
&1&&&\vdots\\[8pt]
&&\ddots&\ddots&\vdots\\[12pt]
&&&\;1&d_{j-1,j-1}\\[8pt]
\end{array}
\right|=
\left| 
\begin{array}{ccrrc}
\alpha_{i-1}^{w}&\alpha_{i-1}^{w-1}&\ldots&\ldots&\alpha_{i-1}^{w-j+i-1}\\[8pt]
1&\alpha_{i}^w&\ldots&\ldots&\alpha_{i}^{w-j+i}\\[8pt]
&1&&&\vdots\\[8pt]
&&\ddots&\ddots&\vdots\\[12pt]
&&&\;1&\alpha_{j-2}^w\\[8pt]
\end{array}
\right|.
$$
According to the determinantal formulas of Section~\ref{DeTS}, 
this is precisely the entry 
$d_{j,i+k}$ of the frieze $\Gc(F)$,
and hence the result.
\end{proof}

\subsection{Elements of the representation theory}

Our next goal is to explain the relation of the involution $^\iota$
with Schubert cells in the Grassmannians.
We have to recall some basic notions of representation theory.

The {\it Weyl group} of $\SL_{n}$ is the group $\Sc_{n}$ 
of permutations over $n$ letters
that we think of as the set of integers  $\{1, 2, \ldots, n\}$.
The group $\Sc_{n}$ is generated by $(n-1)$ elements denoted 
by $s_i$, $1\leq i\leq n-1$, representing the elementary transposition
$i \leftrightarrow i+1$.
The relations between the generators are as follows:
$$
\begin{array}{rcl}
s_{i}s_{i+1}s_{i}&=&s_{i+1}s_{i}s_{i+1},\\[4pt]  
s_is_j&=&s_js_i, \quad  |i-j|> 1.
\end{array}
$$
A decomposition of $\s\in\Sc_{n}$ as
$$
\s=s_{i_1}s_{i_2}\cdots s_{i_p}
$$
is called \textit{reduced} if it involves the least possible number of generators. 
Equivalently, we call the sequence $\bfi=(i_1, \ldots, i_p)$ a \textit{reduced word} for $\s$.

The group $\Sc_{n}$ can be viewed as a subgroup of $\SL_{n}$ using the following lift of the generators
$$
s_i=\begin{pmatrix}
\ddots&&&\\
&0&1&\\
&-1&0&&\\
&&&\ddots
\end{pmatrix}.
$$

Let us now describe the {\it standard parametrization}
of the unipotent subgroup $N$.
Consider the following one-parameter subgroups of $N$:
$$
x_i(t)=
\begin{pmatrix}
\ddots&&&\\
&1&t&\\
&&1&&\\
&&&\ddots
\end{pmatrix}, \quad
1\leq i \leq n-1, 
\quad 
t\in\R,
$$
where $t$ is in position $(i,i+1)$.
The matrices $x_i(t)$ are called the elementary Jacobi matrices,
these are generators of $N$.

The next notion we need is that of the {\it Schubert cells}.
Denote by $B^-$ the Borel subgroup of lower triangular matrices of $\SL_{n}$. 
Fix an arbitrary element $\s\in\Sc_{n}$, 
and consider the set 
$$
N^\s:=N\cap B^-\s{}B^-
$$ 
which is known as
an open dense subset of a Schubert cell.
It is well known that generically elements of $N^\s$ can be represented as 
\begin{equation}
\label{ThexEq}
x_\bfi (\bft) =x_{i_1}(t_1)x_{i_2}(t_2)\cdots x_{i_p}(t_p),
\end{equation}
where $\bft=(t_1,\ldots, t_p)\in \R^p$ and 
$\bfi=(i_1,\ldots, i_p)$ is an arbitrary reduced word for $\s$. 

For the following choice 
\begin{equation}
\label{Thesigma}
\s=s_{k+1}\cdots s_{n-1}\, s_k\cdots s_{n-2}\, \cdots  \, s_{1}\cdots s_{n-k-1},
\end{equation}
the set $N^\s$ is identified with an open subset of the Grassmannian $\Gr_{k+1, n}$.
See, e.g.,~\cite[Chap.~8]{Spr}, for more details.

\subsection{The anti-involution $^{\iota}$ and the Grassmannians}\label{ClSec}

It was proved in~\cite{BFZ} that
the anti-involution $^{\iota}$ on $N$ can be written 
in terms of the generators as follows.
Set $x_i(t)^{\iota}=x_i(t)$ and for $x$ as in~(\ref{ThexEq}),
one has:
$$
x^\iota=
x_{i_p}(t_p)x_{i_{p-1}}(t_{p-1})\cdots{}x_{i_1}(t_1).
$$
This map is well-defined, i.e., it is independent of the choice
of the decomposition of $x$ into a product of generators since it coincides with~(\ref{IoT}).

Restricted to $N^\s$, where $\s$ is given by~(\ref{Thesigma}), the anti-involution reads:
$
^{\iota}:N^\s\to{}N^{\s^{-1}}.
$
In particular, it sends an open subset of the Grassmannian $\Gr_{k+1, n}$
to an open subset of~$\Gr_{w+1, n}$.

We have already defined the embedding (\ref{TheEmbM}) of the space of
$\SL_{k+1}$-frieze patterns into the Grassmannian.
Quite obviously, one also has an embedding into $N^\s$, so that
$$
{\mathcal F}_{k+1,n}\subset{}N^\s\subset\Gr_{k+1,n}.
$$
It can be shown that the image of the involution
$^{\iota}$ restricted to ${\mathcal F}_{k+1,n}$
belongs to ${\mathcal F}_{w+1,n}$.
However, the proof is technically involved and we do not dwell on the details here.
The following example illustrates the situation quite well.

\begin{example}
The case of $\Gr_{2,5}$.
Fix $\s=s_2s_3s_4s_1s_2s_3$ and consider the following element of~$N^\s$:
$$
x=x_2(t_1)x_3(t_2)x_4(t_3)x_1(t_4)x_2(t_5)x_3(t_6)=
\begin{pmatrix}
1 & t_4& t_4t_5&t_4t_5t_6&0\\[4pt]
&1&t_5+t_1&t_1t_2+t_1t_6+t_5t_6&t_1t_2t_3\\[4pt]
&&1&t_6+t_2&t_2t_3\\[4pt]
&&&1&t_3\\[4pt]
&&&&1
\end{pmatrix}.
$$
One then has
$$
x^{\iota}=x_3(t_6)x_2(t_5)x_1(t_4)x_4(t_3)x_3(t_2)x_2(t_1)=
\begin{pmatrix}
1 & t_4& t_4t_1&0&0\\[4pt]
&1&t_5+t_1&t_2t_5&0\\[4pt]
&&1&t_6+t_2&t_6t_3\\[4pt]
&&&1&t_3\\[4pt]
&&&&1
\end{pmatrix}.
$$
If now $x\in\Fc_{2,5}$, so that every $2\times2$-minor equals 1,
then one has after an easy computation:
$$
t_1t_4=1,
\quad
t_1t_2t_4t_5=1,
\quad
t_1t_2t_3t_4t_5t_6=1,
\quad
t_1t_2t_3=1,
$$
and this implies that $x^{\iota}\in\Fc_{3,5}$.
\end{example}

\section{Periodic rational maps from frieze patterns}\label{PerSec}

The periodic rational maps described in this section 
are a simple consequence of the isomorphism
$\Ec_{k+1,n}\simeq\Fc_{k+1,n}$ and of periodicity condition.
However, the maps are of interest.
The simplest example is known as the {\it Gauss map}, see~\cite{Gau}.
This map is given explicitly by
$$
(c_1,c_2)\mapsto\left(c_2,\frac{1+c_1}{c_1c_2-1}\right),
$$ 
where $c_1,c_2$ are variables.
Gauss proved that this map is $5$-periodic.
In our terminology, 
Gauss' map consists in the index shift: $(c_1,c_2)\mapsto(c_2,c_3)$
in a Hill equation $V_i=c_iV_{i-1}-V_{i-2}$ with $5$-antiperiodic solutions.

The maps that we calculate in Section~\ref{FPerS}, are related to so-called Zamolodchikov
periodicity conjecture.
They can be deduced from the simplest $A_k\times{}A_w$-case;
the periodicity was proved in this case in~\cite{Vol} by a different method.
The general case of the conjecture was recently proved in~\cite{Kel}. 
It would be interesting to investigate an approach based on linear difference equations
in this case.

Finally, the maps that we calculate in Section~\ref{SPerS},
correspond to self-dual difference equations.
They do not enter into the framework of Zamolodchikov
periodicity conjecture and seem to be new.

\subsection{Periodicity of $\SL_{k+1}$-frieze patterns and generalized Gauss maps}
\label{FPerS}

An important property of $\SL_{k+1}$-frieze patterns is their
periodicity.

\begin{corollary}
\label{periodic}
Tame $\SL_{k+1}$-frieze patterns of width $w$ are $n$-periodic in the horizontal direction: 
$d_{i,j}=d_{i+n,j+n}$ for all $i,j$, where $n=k+w+2$.
\end{corollary}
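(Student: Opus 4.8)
The plan is to transport the statement to the difference-equation side via the isomorphism $\Fc_{k+1,n}\simeq\Ec_{k+1,n}$ of Theorem~\ref{TriThm}(i), where the $n$-periodicity of the coefficients makes the horizontal periodicity of the frieze transparent. First I would take a tame $\SL_{k+1}$-frieze pattern $F$ and let \eqref{REq} be the corresponding difference equation; its coefficients are $n$-periodic, $a^{\ell}_{s+n}=a^{\ell}_s$, by the very definition of $\Ec_{k+1,n}$. Recall from the construction in Section~\ref{1stIso} that the North-East diagonal $\mu_i$ of $F$ is the solution $(V^{(i)}_s)_{s\in\Z}$ of \eqref{REq} determined by the initial condition $(V^{(i)}_{i-k-1},\ldots,V^{(i)}_{i-1})=(0,\ldots,0,1)$, and that the entries are read off as $d_{i,j}=V^{(i)}_j$.

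The key step is a shift-invariance observation: since the coefficients satisfy $a^{\ell}_{s+n}=a^{\ell}_s$, the shifted sequence $(V^{(i)}_{s-n})_{s\in\Z}$ is again a solution of \eqref{REq}. I would then evaluate its initial data at the positions $i+n-k-1,\ldots,i+n-1$, which reproduces exactly $(0,\ldots,0,1)$ --- precisely the initial condition defining the diagonal $\mu_{i+n}$. By uniqueness of the solution of an order $(k+1)$ recurrence given $k+1$ consecutive values, this forces $V^{(i+n)}_s=V^{(i)}_{s-n}$ for all $s$. Substituting $s=j+n$ yields $d_{i+n,j+n}=V^{(i+n)}_{j+n}=V^{(i)}_j=d_{i,j}$, which is the asserted periodicity.

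There is no serious obstacle here; the argument is essentially a bookkeeping exercise in the two index shifts $i\mapsto i+n$ (the diagonal label) and $j\mapsto j+n$ (the position), arranged so that the periodicity of the coefficients is invoked correctly. The only conceptual point worth flagging is that tameness is exactly what is needed to invoke Theorem~\ref{TriThm}(i), guaranteeing that $F$ genuinely arises from an element of $\Ec_{k+1,n}$ with $n$-periodic coefficients; without this one could not pass to the equation picture in which the periodicity is manifest.
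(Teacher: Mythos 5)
Your argument is correct and is essentially the paper's own route: the authors simply assert that Corollary~\ref{periodic} ``is a simple corollary of Theorem~\ref{TriThm}, Part~(i)'', and your shift-invariance computation (that $n$-periodicity of the coefficients plus uniqueness of the solution with initial data $(0,\dots,0,1)$ forces $V^{(i+n)}_s=V^{(i)}_{s-n}$, hence $d_{i+n,j+n}=d_{i,j}$) is exactly the bookkeeping they leave implicit, with tameness correctly flagged as the hypothesis needed to place $F$ in the image of $\Ec_{k+1,n}$.
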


This statement is a simple corollary of Theorem~\ref{TriThm}, Part~(i).
Note that,
in the simplest case $k=1$, the above statement was proved by Coxeter~\cite{Cox}.

Let us introduce the notation:
$$
U(a_1,a_2,\dots,a_k):=\left|\begin{array}{cccccc}
a_{1}&1&&&\\[4pt]
1&a_{2}&1&&\\[4pt]
&\ddots&\ddots&\ddots&\\[4pt]
&&1&a_{k-1}&1\\[4pt]
&&&1&a_{k}
\end{array}\right|.
$$
for the simplest determinants from Section~\ref{DeTS},
see Example~\ref{HillEx2}, part (a).
We obtain the following family of rational periodic maps.

\begin{corollary} 
\label{main}
Let the rational map $F: \R^{n-3} \to \R^{n-3}$ be given by the formula
\begin{equation*}
\label{FMap}
F:(a_1,a_2,\dots,a_{n-3})\mapsto
\left(a_2,a_3,\dots,a_{n-3}, P(a_1,a_2,\dots,a_{n-3})\right),
\end{equation*}
where 
$$
P(a_1,a_2,\dots,a_{n-3}) = \frac{1+U(a_1,\dots,a_{n-4})}{U(a_1,\dots,a_{n-3})}.
$$
Then $F^n=\mathrm{id}$.
\end{corollary}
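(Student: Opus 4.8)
The plan is to realize the map $F$ as the one-step index shift on the space $\Ec_{2,n}\simeq\Fc_{2,n}$ of Hill equations, respectively tame $\SL_2$-frieze patterns of width $w=n-3$, and then to deduce $F^n=\mathrm{id}$ directly from the horizontal periodicity of such friezes (Corollary~\ref{periodic}). By Theorem~\ref{TriThm}(i), together with the classical fact that a Coxeter frieze of width $w$ is determined by any $w$ consecutive entries of its first nontrivial diagonal, a generic point $(a_1,\dots,a_{n-3})\in\R^{n-3}$ determines a unique tame width-$(n-3)$ $\SL_2$-frieze pattern $(d_{p,q})$ with $d_{1,1}=a_1,\dots,d_{n-3,n-3}=a_{n-3}$. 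Writing $a_p:=d_{p,p}$ for all $p\in\Z$, the whole argument reduces to identifying $F$ with the shift $a_p\mapsto a_{p+1}$ on windows of length $n-3$.

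To make this identification, I would first recall from Example~\ref{HillEx2}(a) and the construction of Section~\ref{1stIso} that the frieze entries are the continuants $d_{p,p+j}=U(a_p,\dots,a_{p+j})$, obtained by running the Hill recurrence $V_i=a_iV_{i-1}-V_{i-2}$ from the initial data $(0,1)$. The lower bordering row of $1$'s of a width-$w$ frieze is $d_{p,p+w}=1$; for $w=n-3$ and $p=1$ this is the single closing relation $U(a_1,\dots,a_{n-2})=1$. Expanding this continuant along its last entry, $U(a_1,\dots,a_{n-2})=a_{n-2}\,U(a_1,\dots,a_{n-3})-U(a_1,\dots,a_{n-4})$, and solving for the new entry yields exactly
$$
a_{n-2}=\frac{1+U(a_1,\dots,a_{n-4})}{U(a_1,\dots,a_{n-3})}=P(a_1,\dots,a_{n-3}).
$$
Hence $F(a_1,\dots,a_{n-3})=(a_2,\dots,a_{n-2})$, and, applying the same computation to every window, $F$ acts on the first diagonal of the frieze as the pure shift $(a_p,\dots,a_{p+n-4})\mapsto(a_{p+1},\dots,a_{p+n-3})$.

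Finally, I would invoke Corollary~\ref{periodic}: the frieze is $n$-periodic, $d_{p+n,q+n}=d_{p,q}$, so in particular $a_{p+n}=d_{p+n,p+n}=d_{p,p}=a_p$. Iterating the shift $n$ times therefore returns every window to itself,
$$
F^n(a_1,\dots,a_{n-3})=(a_{n+1},\dots,a_{2n-3})=(a_1,\dots,a_{n-3}),
$$
which is the desired identity $F^n=\mathrm{id}$ (as rational maps, i.e.\ on the dense open set where the continuant denominators are nonzero). The one genuinely nontrivial ingredient is the periodicity statement, Corollary~\ref{periodic}, which I treat as established; the rest is the continuant bookkeeping identifying $P$ with the single closing relation of the frieze. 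The main point requiring care is to keep the two descriptions — coefficients of the Hill equation versus entries of the first diagonal — aligned, since it is precisely their coincidence $a_p=d_{p,p}$ that turns frieze periodicity into periodicity of $F$. As a check, for $n=5$ one has $P(a_1,a_2)=(1+a_1)/(a_1a_2-1)$, recovering Gauss' original $5$-periodic map.
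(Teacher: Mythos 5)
Your proposal is correct and follows essentially the same route as the paper: identify the seed $(a_1,\dots,a_{n-3})$ with the coefficients of an antiperiodic Hill equation (equivalently the first nontrivial row of a closed tame $\SL_2$-frieze of width $n-3$), expand the closing continuant relation $U(a_i,\dots,a_{i+n-3})=1$ along its last row to recognize $P$ as the one-step shift $a_{i+n-3}=P(a_i,\dots,a_{i+n-4})$, and conclude from the $n$-periodicity of the frieze (Corollary~\ref{periodic}). The only difference is presentational — you start from the frieze determined by the seed, while the paper starts from the equation and reconstructs the sequence by iterating $F$ — and your continuant bookkeeping and the $n=5$ sanity check are consistent with the paper's.
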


\begin{proof} 
Consider the periodic Hill equation $V_i=a_iV_{i-1}-V_{i-2}$,
or equivalently $\SL_2$-frieze  pattern whose first row consists of ones, 
and the second row is the bi-infinite sequence $(a_i)$. 
The entries of $k$th row of this frieze pattern are 
$$
U(a_i,\dots,a_{i+k-2}),\qquad i\in\Z,
$$
see Example~\ref{HillEx2} and \cite{Cox,MGOT}. 

Assume that all the solutions of the Hill equation are $n$-antiperiodic. 
Then the frieze pattern is closed of width $n-3$ and its rows are $n$-periodic, 
see \cite{Cox} and Corollary~\ref{periodic}.
Furthermore, the $(n-2)$th row of a closed $\SL_2$-frieze pattern
consists in $1$'s.
Therefore 
$$
U(a_i,a_{i+1},\dots,a_{i+n-3})=1,
$$
 for all $i$. 

On the other hand, decomposing the determinant $U(a_i,a_{i+1},\dots,a_{i+n-3})$
by the last row, we find
\begin{equation} 
\label{rec1}
a_{i+n-3}=P(a_i,\dots,a_{i+n-4}).
\end{equation}
 We can choose $a_1,\dots,a_{n-3}$ arbitrarily and then consecutively define 
 $a_{n-2}, a_{n-1},\dots$ using formula (\ref{rec1}) for $i=1,2,\dots$. 
 That is, we reconstruct the sequence $(a_i)$ from the ``seed" 
 $\{a_1,\dots,a_{n-3}\}$ by iterating the map $F$. 
 By periodicity assumption, the result is an $n$-periodic sequence.  
 \end{proof}

Introduce another notation:
$$
V(a_1,b_1,\dots,b_{k-1},a_k):=\left|
\begin{array}{llllll}
a_{1}&b_{1}&1&&&\\[4pt]
1&a_{2}&b_{2}&1&&\\[4pt]
&\;\ddots&\; \ddots& \;\ddots&\; \ddots&\\[4pt]
&&1&a_{k-2}&b_{k-2}&1\\[4pt]
&&&1&a_{k-1}&b_{k-1}\\[4pt]
&&&&1&a_k
\end{array}
\right|,
$$
see Example~\ref{HillEx2}, part (b).

\begin{corollary} 
\label{mainBis}
Let the rational map $\Phi: \R^{2n-8} \to \R^{2n-8}$ be given by the formula
\begin{equation*}
\label{PhiMap}
\Phi:(a_1,b_1,a_2,b_2,\dots,a_{n-4},b_{n-4})\mapsto\left(b_1,a_2,b_2,\dots,b_{n-4},Q(a_1,b_1,\dots,b_{n-4},a_{n-4})\right),
\end{equation*}
where
$$
Q(a_1,b_1,\dots,a_{n-4},b_{n-4})=
\frac{1+b_{n-4}V(a_1,b_1,\dots,a_{n-5})-V(a_1,b_1,\dots,a_{n-6})}
{V(a_1,b_1,\dots,a_{n-4})}.
$$
Then $\Phi^{2n}=\mathrm{id}$.
\end{corollary}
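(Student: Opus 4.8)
The plan is to run the argument of Corollary~\ref{main} one step up, replacing the Hill equation by the third-order equation $V_i=a_iV_{i-1}-b_iV_{i-2}+V_{i-3}$ and carrying along the second coefficient sequence. First I would note that, for $k=2$, the sign $(-1)^k$ in \eqref{APeriod} is $+1$, so the hypothesis is that all solutions are genuinely $n$-periodic; by Theorem~\ref{TriThm}(i) such an equation corresponds to a tame closed $\SL_3$-frieze pattern of width $w=n-4$, and by Corollary~\ref{periodic} this frieze is $n$-periodic in the horizontal direction, so both $(a_i)$ and $(b_i)$ are $n$-periodic. By Example~\ref{HillEx2}(b) the entries of the frieze are exactly the banded determinants $V(\cdots)$ in the coefficients. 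The closure of the frieze---the boundary row $d_{i,i+w}=1$ together with tameness---yields determinantal identities among consecutive $a$'s and $b$'s, and expanding the appropriate identity along its last row and last column should reproduce precisely the expression $Q$: the leading cofactor gives the denominator $V(a_1,b_1,\dots,a_{n-4})$, while the remaining cofactor produces the numerator $1+b_{n-4}V(a_1,b_1,\dots,a_{n-5})-V(a_1,b_1,\dots,a_{n-6})$. This determinantal bookkeeping, together with the one-step shift between the labelling of the window variables and that of the frieze coefficients, is the step I expect to be the most delicate.

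The genuinely new feature relative to Corollary~\ref{main} is the interleaving of the two sequences and the resulting factor of $2$. I would make this explicit by reading off the effect of $\Phi$ on a window $(a_1,b_1,\dots,a_w,b_w)$: one application drops $a_1$, appends $Q$, and, when the output is read again as an $(a,b)$-window, \emph{interchanges the roles} of the two sequences, the former $b$'s becoming the new $a$'s and the shifted $a$'s the new $b$'s. This interchange is exactly the projective duality of Example~\ref{DeEx} and Proposition~\ref{Turn}, under which $V_i=a_iV_{i-1}-b_iV_{i-2}+V_{i-3}$ is sent to $V^*_i=b_{i+1}V^*_{i-1}-a_iV^*_{i-2}+V^*_{i-3}$, an equation of the same form with $a$ and $b$ swapped. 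Hence the single formula $Q$ simultaneously encodes the closure recurrence of the equation and of its dual: on an ``$a$-first'' window it outputs the next $a$, on the swapped ``$b$-first'' window it outputs the next $b$. Proving that $Q$ is legitimate on both parities---that the dual closure determinant expands to the same rational expression---is the conceptual core, and is where I would invoke Proposition~\ref{Turn} (duality as the horizontal flip of the frieze).

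Granting the previous paragraph, the periodicity is immediate. A direct computation gives $\Phi^2(a_1,b_1,\dots,a_w,b_w)=(a_2,b_2,\dots,a_{w+1},b_{w+1})$, so that $\Phi^2$ is simply the shift of both $(a_i)$ and $(b_i)$ by one index: two applications of $\Phi$ amount to ``advance and dualize'' performed twice, which undoes the swap and leaves a pure shift. Since both coefficient sequences are $n$-periodic, $\Phi^{2n}=(\Phi^2)^n$ shifts every index by $n$ and is therefore the identity, giving $\Phi^{2n}=\mathrm{id}$. As in Corollary~\ref{main}, I would close by remarking that the seed $(a_1,b_1,\dots,a_{n-4},b_{n-4})$ can be prescribed freely on a dense open set and that iterating $\Phi$ reconstructs the whole interleaved coefficient sequence, so that the closure identities invoked above are indeed available at every step of the iteration.
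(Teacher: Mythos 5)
Your proposal is correct and follows essentially the same route as the paper: the key observation in both is that $\Phi$ realizes the projective duality of Example~\ref{DeEx} on seeds (swapping the roles of the $a$'s and $b$'s), so that $\Phi^2$ is the unit shift of both $n$-periodic coefficient sequences and hence $\Phi^{2n}=(\Phi^2)^n=\mathrm{id}$. The determinantal bookkeeping you flag as delicate is likewise left implicit in the paper's proof, which simply refers back to the argument of Corollary~\ref{main}.
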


\begin{proof} 
The arguments are similar to those of the above proof,
but we will also use the notion of a projectively dual equation.

Consider the difference equation
$V_i=a_iV_{i-1}-b_iV_{i-2}+V_{i-3}$
and assume that all its solutions (and therefore all its coefficients) are $n$-periodic.
Consider the dual equation, see Example~\ref{DeEx}, 
but ``read'' it from right to left:
$$
V^*_{i-3}=a_iV^*_{i-2}-b_{i+1}V^*_{i-1}+V^*_i.
$$

The map $\Phi$ associates to a ``seed'' $\{a_1,b_1,\dots,a_{n-4},b_{n-4}\}$
of the initial equation the same ``seed'' of the dual equation.

Recall finally that the double iteration of the projective duality is a shift:
$i\to{}i+1$.
Therefore, $\Phi^2:(a_i,b_i,\ldots)\mapsto(a_{i+1},b_{i+1},\ldots)$,
which is $n$-periodic by assumption.
\end{proof}

\begin{example} \label{penta}
{\rm For $n=5$, the maps from Corollaries~\ref{main} and~\ref{mainBis} are as follows:
$$
F(a_1,a_2)=\left(a_2,\frac{1+a_1}{a_1a_2-1}\right), \qquad 
\Phi(b,a)=\left(a,\frac{a+1}{b}\right).
$$
The first one is the classical $5$-periodic Gauss map, the second, which looks even
more elementary, is $10$-periodic.
}
\end{example}

\subsection{Periodic maps in the self-dual case}\label{SPerS}

Let us now consider a version of the rational periodic maps
that correspond to self-dual third-order $n$-periodic equations,
see Section~\ref{SDS}.

\begin{corollary} 
\label{selfd}
(i) 
Let $n=2m-1$, and let the rational map $G_{o}: \R^{n-4} \to \R^{n-4}$ be given by the formula
$$
G_{o}(a_1,b_1,a_2,b_2,\dots,a_{m-2},b_{m-2})=\left(b_1,a_2,b_2,\dots,b_{m-2}, R_o(a_1,b_1,\dots,a_{m-2},b_{m-2})\right),
$$
where 
$$
\begin{array}{l}
R_o(a_1,b_1,\dots,a_{m-2},b_{m-2}) = \\[6pt]
\displaystyle
\qquad
\qquad
\qquad
\frac{V(a_2,b_2,\dots,a_{m-2}) + b_{m-2} V(a_1,b_1,\dots,a_{m-3})-V(a_1,b_1,\dots,a_{m-4})}{V(a_1,b_1,\dots,a_{m-2})}.
\end{array}
$$
Then $G_o^n=\mathrm{id}$.

(ii) Let $n=2m$, and let the rational map $G_{e}: \R^{n-4} \to \R^{n-4}$ be given by the formula
$$
G_e(a_1,b_1,a_2,b_2,\dots,a_{m-2},b_{m-2})=
\left(
b_1,a_2,b_2,\dots,b_{m-2}, R_e(a_1,b_1,\dots,a_{m-2},b_{m-2})\right),
$$
where 
$$
\begin{array}{l}
R_e(a_1,b_1,\dots,a_{m-2},b_{m-2}) = \\[6pt]
\displaystyle
\qquad
\qquad
\qquad
\frac{V(b_2,a_3,\dots,a_{m-2}) + b_{m-2} V(a_1,b_1,\dots,a_{m-3})-V(a_1,b_1,\dots,a_{m-4})}{V(a_1,b_1,\dots,a_{m-2})}.
\end{array}
$$
Then $G_e^n=\mathrm{id}$.\\
\end{corollary}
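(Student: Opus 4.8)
The plan is to adapt the arguments that prove Corollaries~\ref{main} and~\ref{mainBis} to the self-dual setting. I would read the seed $(a_1,b_1,\dots,a_{m-2},b_{m-2})$ as coordinates on the space of self-dual, $n$-periodic third-order equations $V_i=a_iV_{i-1}-b_iV_{i-2}+V_{i-3}$, and show that $G_o$ (resp.\ $G_e$) implements an index shift on this family. Recall that in Corollary~\ref{mainBis} the map $\Phi$ sends the seed of an equation to the seed of its projective dual, so $\Phi^2$ is the shift $i\mapsto i+1$ and the period is $2n$. The point of self-duality is that the dual equation now coincides with the original up to a shift, so a \emph{single} application of the map already returns to the same family; this both halves the period and turns the map itself into an index shift. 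Since the coefficients are $n$-periodic, any such shift satisfies $(\text{shift})^n=\mathrm{id}$, which gives $G_o^{\,n}=\mathrm{id}$ and $G_e^{\,n}=\mathrm{id}$.

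First I would make the self-duality relation explicit. By~(\ref{DualREq}) with $k=2$ the projective duality acts on coefficients by $*:(a_i,b_i)\mapsto(b_{i+1},a_i)$, so that $*^2$ is the shift $i\mapsto i+1$. Imposing that the equation is fixed by $*$ up to a shift $s$ forces $b_{i+s}=a_i$ and $a_{i+s}=b_{i+1}$, hence $a$ acquires period $2s-1$; the compatibility $2s-1\equiv 0 \pmod n$ selects $s=m$ in the odd case $n=2m-1$ and forces a shifted variant in the even case $n=2m$. This single relation recovers the whole bi-infinite coefficient sequence from the seed. The parity of the frieze width $w=n-4$ (odd for $n=2m-1$, even for $n=2m$) is exactly what splits the two formulas: by Proposition~\ref{Turn} the $\SL_3$-frieze of solutions is symmetric about its median horizontal axis, and that axis passes through a central row when $w$ is odd but falls between two rows when $w$ is even.

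Next I would produce the recurrence for the new coefficient. As in the proof of Corollary~\ref{main}, closedness of the width-$(n-4)$ frieze means the relevant continuant built from $(a_i,b_i)$ equals $1$; decomposing this determinant along its last row, using the entries $d_{i,i+j}=V(a_i,b_{i+1},\dots,a_{i+j-1})$ from Example~\ref{HillEx2}(b), expresses the next coefficient as a ratio of continuants with denominator $V(a_1,b_1,\dots,a_{m-2})$. The key step is then to fold this long continuant using the self-dual symmetry: reflecting about the median axis identifies the bottom portion of the frieze with the top, collapsing the full continuant into half-size ones. Carrying this out should yield precisely the numerator $V(a_2,b_2,\dots,a_{m-2})+b_{m-2}V(a_1,b_1,\dots,a_{m-3})-V(a_1,b_1,\dots,a_{m-4})$ in the odd case, and the analogous expression beginning with $V(b_2,a_3,\dots,a_{m-2})$ in the even case, the discrepancy again tracing back to the position of the reflection axis.

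The main obstacle is this middle step: establishing the exact continuant-folding identity that converts the closedness determinant into the stated three-term numerator, and doing the index bookkeeping carefully enough that the odd and even cases emerge with their respective formulas and reflection axes. I expect the determinant manipulation, rather than the periodicity conclusion, to require the real work; the finiteness of order is then immediate once $G_o$ and $G_e$ are identified with index shifts of $n$-periodic data.
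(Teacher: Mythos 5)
Your overall framing --- that self-duality makes the interleaved coefficient sequence $\dots,a_1,b_1,a_2,b_2,\dots$ into an $n$-periodic sequence (via $b_{i+m}=a_i$ for $n=2m-1$, resp.\ $a_{i+m}=a_i$, $b_{i+m}=b_i$ for $n=2m$), so that $G_o$ and $G_e$ act as unit shifts of $n$-periodic data and hence have order $n$ --- is exactly the paper's, and that part of your argument is sound. The gap is the step you yourself flag: you never establish the identity that produces the three-term numerator of $R_o$ and $R_e$, and the route you propose (expand the closedness condition $d_{i,i+w}=1$ of the width-$(n-4)$ frieze along its last row, then ``fold'' the resulting long continuant using the reflection symmetry) is not carried out and is considerably harder than what is needed. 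Expanding the closedness determinant expresses the new coefficient in terms of a seed of length about $2n-8$ --- that is essentially Corollary~\ref{mainBis} again --- and collapsing this to the half-length self-dual seed requires a nontrivial continuant identity that you only conjecture exists.

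The paper's proof avoids this entirely by computing a single frieze entry in two ways. By Proposition~\ref{Turn}, self-duality means the $\SL_3$-frieze is symmetric about its median horizontal axis, so its bottom nontrivial row coincides with its top one. The entry at the intersection of the South-East diagonal through $a_1$ and the South-West diagonal through $a_{m-1}$ (both taken in the top row) is $V(a_1,b_1,\dots,a_{m-1})$; reading the same entry from the bottom row, via the North-East diagonal through $a_2$ and the North-West diagonal through $a_{m-2}$, gives $V(a_2,b_2,\dots,a_{m-2})$. Hence
$$
V(a_1,b_1,\dots,a_{m-1})=V(a_2,b_2,\dots,a_{m-2}),
$$
and the single three-term expansion
$V(a_1,\dots,a_{m-1})=a_{m-1}V(a_1,\dots,a_{m-2})-b_{m-2}V(a_1,\dots,a_{m-3})+V(a_1,\dots,a_{m-4})$
solves for $a_{m-1}=R_o$. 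The even case is identical except that the symmetry axis falls between two rows, which shifts the bottom-row reading to $V(b_2,a_3,\dots,a_{m-2})$ and accounts for the difference between $R_o$ and $R_e$. To complete your proof you would need either to prove your folding identity --- which in effect amounts to rediscovering the displayed equality --- or to replace that step by this two-way computation of a single entry.
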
 

\begin{proof}
Let us consider part (i); the other case is similar. 

As before, consider $n$-periodic equations $V_i=a_iV_{i-1}-b_iV_{i-2}+V_{i-3}$.
As before, both sequences $(a_i)$ and $(b_i)$ are $n$-periodic, so that
the sequence
$$
\dots,b_1,a_1,b_2,a_2,\dots,b_n,a_n,\dots
$$
is $2n$-periodic.
However, if the equation is self-dual, then this sequence is, 
actually, $n$-periodic.
More precisely,
$$
\left\{
\begin{array}{ll}
b_{i+m}=a_i, & n=2m-1;\\[4pt]
 b_{i+m}=b_i,\, a_{i+m}=a_i , & n=2m.
\end{array}
\right.
$$

We are concerned with the shift
$$
(a_1,b_1,a_2,b_2,\dots,a_{m-2},b_{m-2}) \mapsto 
(b_1,a_2,b_2,\dots,a_{m-2},b_{m-2},a_{m-1}),
$$ 
and we need to express $a_{m-1}$ as a function of $a_1,b_1,a_2,b_2,\dots,a_{m-2},b_{m-2}$.

To this end, we express, in two ways, the same entry of the 
$\SL_3$-frieze pattern. 
Consider the South-East diagonal through the entry $a_1$ of the top 
non-trivial row and the South-West diagonal through the entry $a_{m-1}$ of the same row. 
The entry at their intersection is $V(a_1,b_1,\dots,a_{m-1})$, see \cite{MGOT}. 

Since the difference equation is self-dual, 
the bottom non-trivial row of the $\SL_3$-frieze pattern is identical to the top one. 
The North-East diagonal through the entry $a_2$ of this bottom non-trivial row intersects the North-West diagonal through the entry $a_{m-2}$ of the same row at the same point as above, and the entry there is $V(a_2,b_2,\dots,a_{m-2})$. 

Therefore 
$$
V(a_1,b_1,\dots,a_{m-1})=V(a_2,b_2,\dots,a_{m-2}),
$$ 
and it remains to solve this equation for $a_{m-1}$. 
This yields the formula for the rational function~$R$.
\end{proof}

\begin{example}
{\rm If $n=6$, we obtain the map
$$
G_e(a,b)=\left(b,\frac{2b}{a}\right)
$$
that indeed has order 6. If $n=7$, we obtain the map
$$
G_o(a_1,b_1,a_2,b_2)=\left(b_1,a_2,b_2,\frac{a_2+a_1b_2-1}{a_1a_2-b_1}\right)
$$
that has order 7.
}
\end{example}

\subsection{Another expression for $2n$-periodic maps}

Let us sketch a derivation of the map $\Phi$ from a
geometrical point of view. The formula is:
$$
\Phi(x_1,\dots,x_{2n-8})=
\left(x_2,\dots,x_{2n-8}, R(x_1,\dots,x_{2n-8})\right),
$$
where
$$
R(x_1,\dots,x_{2n-8})=
\frac{O^{2n-7}_{-1}}{O^{2n-9}_{-1}-x_{2n-8}x_{2n-9}\,O^{2n-11}_{-1}},
$$
and where $O_a^b$ is defined by recurrence relation
$$
O_a^b=O_a^{b-2}-x_{b-2}O_a^{b-4}+x_{b-2}x_{b-3}x_{b-4}O_a^{b-6},
\qquad
a=b-4,b-6,\ldots
$$
with the initial conditions $O_b^{b}=O_{b-2}^{b}=1$.
We will see below why $\Phi^{2n}$ is the identity.

\begin{example}
The first non-trivial example is
$$
\Phi(x_1,x_2)=\left(
x_2,\frac{1-x_1}{1-x_1x_2}
\right),
$$
which is the Gauss map.
The next example is:
$$
\Phi(x_1,x_2,x_3,x_4)=\left(
x_2,x_3,x_4,\frac{1-x_1-x_3+x_1x_2x_3}{1-x_1-x_3x_4}
\right),
$$
which is $12$-periodic.
\end{example}

These formulas give the expression of the map $\Phi$
in so-called corner coordinates on the moduli space of $n$-gons in $\PP^2$.
Let $LL'$ denotes the intersection of
lines $L$ and $L'$ and let 
$PP'$ denote the line containing $P$ and $P'$.
We have the {\it inverse cross-ratio\/}
\begin{equation}
\label{inverse-cross-ratio}
[a,b,c,d]=\frac{(a-b)(c-d)}{(a-c)(b-d)}.
\end{equation}

A {\it polygonal ray\/} is an infinite collection of
points $P_{-7},P_{-3},P_{+1},...$, with
indices congruent to $1$ mod $4$, normalized so that
\begin{equation*}
P_{-7}=(0,0,1), \hskip 15 pt
P_{-3}=(1,0,1), \hskip 15 pt
P_{+1}=(1,1,1), \hskip 15 pt
P_{+5}=(0,1,1).
\end{equation*}
These points determine lines
\begin{equation*}
L_{-5+k}=P_{-7+k}P_{-3+k}, \hskip 30 pt
\end{equation*}
and also the flags
\begin{equation*}
F_{-6+k}=(P_{-7+k},L_{-5+k}), \hskip 15 pt
F_{-4+k}=(P_{-3+k},L_{-5+k})
\end{equation*}

We associate {\it corner invariants\/} to the flags, as follows.
\begin{equation*}
c(F_{0+k})=[P_{-7+k},P_{-3+k},L_{-5+k}L_{3+k},L_{-5+k}L_{7+k}],
\end{equation*}
\begin{equation*}
c(F_{2+k})=[P_{9+k},P_{5+k},L_{7+k}L_{-1+k},L_{7+k}L_{-5+k}],
\end{equation*}
All these equations are meant for $k=0,4,8,12,....$
Finally, we define

\begin{equation*}
x_k=c(F_{2k}); \hskip 30 pt k=0,1,2,3...
\end{equation*}
The quantities
$x_0,x_1,x_2,...$ are known as the corner invariants of the ray.
\newline
\newline
{\bf Remark:\/} From the exposition here,
it would seem  more natural to call these
invariants {\it flag invariants\/}, though in the past we
have called them {\it corner invariants\/}.
\newline

We would like to go in the other direction:
Given a list $(x_0,x_1,x_2,...)$,
 we seek a polygonal ray which
has this list as its flag invariants.
Taking \cite{Sch1}, eq. (20) and applying a suitable
projective duality, we get the {\it reconstruction formula\/}

\begin{equation*}
\label{reconstruct}
P_{9+2k}=
\begin{bmatrix}1&-1&x_0x_1 \\ 1&0&0 \\ 1&0&x_0x_1\end{bmatrix}
\begin{bmatrix}O^{3+k}_{-1} \\ O^{3+k}_{+1} \\ O^{3+k}_{+3}\end{bmatrix}
\hskip 30 pt
k=0,2,4,...
\end{equation*}

Multiplying through by the matrix $M^{-1}$, where $M$ is the
matrix in equation \eqref{reconstruct}, we get an alternate
normalization.  Setting
\begin{equation*}
\label{norm11}
Q_{-7}=\begin{bmatrix}0\\ x_0x_1\\ 1\end{bmatrix},\hskip 15 pt
Q_{-3}=\begin{bmatrix}0\\ 0\\ x_0x_1\end{bmatrix},\hskip 15 pt
Q_{1}=\begin{bmatrix}1 \\ 0 \\ 0\end{bmatrix},\hskip 15 pt
Q_{5}=\begin{bmatrix}1 \\ 1 \\ 0\end{bmatrix}
\end{equation*} 
we have
\begin{equation*}
\label{norm12}
Q_{9+2k}=\begin{bmatrix}O^{3+k}_{-1} \\ O^{3+k}_{+1} \\ O^{3+k}_{+3}\end{bmatrix}
\hskip 30 pt
k=0,2,4,...
\end{equation*}

In case we have a closed $n$-gon, we have
\begin{equation}
\label{repeat}
\begin{bmatrix}0\\0\\1\end{bmatrix}=[Q_{-3}]=[Q_{4n-3}]=[Q_{9+2(2n-6)}]=
\begin{bmatrix}O^{2n-3}_{-1}\\ O^{2n-3}_{+1} \\ O^{2n-3}_{+3}\end{bmatrix}.
\end{equation}
Here $[\cdot ]$ denotes the equivalence class in the projective
plane.
Equation \eqref{repeat} yields
$O^{2n-3}_{-1}=O^{2n-3}_{+1}=0$.
Shifting the vertex labels of our polygon by $1$ unit
has the effect of shifting the flag invariants
by $2$ units.  Doing all cyclic shifts, we get
\begin{equation}
\label{rel1}
O_a^b=0 \hskip 30 pt b-a=2n-4,2n-2, \hskip 30pt
a,b\ {\rm odd\/}.
\end{equation}
Given a polygon $P$ with flag
invariants $x_1,x_2,...$ we consider
the dual polygon $P^*$.  The polygon $P^*$ is such that
a projective duality carries the lines extending the
edges of $P^*$ to the points of $P$, and {\it vice versa\/}.
When suitably labeled, the flag invariants of $P^*$ are
$x_2,x_3,\ldots$. For this reason, equation \eqref{rel1} also
holds when both $a$ and $b$ are even.  In particular,
we have the $2n$ relations:
\begin{equation}
\label{rel2}
O_a^b=0, \hskip 30 pt b-a=2n-4.
\end{equation}
Equation \eqref{rel2} tells us that
$O_{-1}^{2n-5}=0$.  But now our basic recurrence relation gives
\begin{equation*}
x_{2n-7}x_{2n-8}x_{2n-9}O_{-1}^{2n-11}-x_{2n-7}O_{-1}^{2n-9}+O_{-1}^{2n-7}=0.
\end{equation*}
Note that $x_0$ does not occur in this equation.
Solving for $x_{2n-7}$, we get
$$
x_{2n-7}=R_n(x_1,...,x_{2n-8}),$$
where $R_n$ is the expression that occurs in the map $\Phi$ above.
Thanks to equation \eqref{rel2},
these equations hold when we shift the indices cyclically by
any amount.  Thus
\begin{equation*}
x_{2n-7+k}=R_n(x_{k+1},...,x_{2n-8+k}), \hskip 30 pt k=1,...,2n.
\end{equation*}
This is an explanation of why $\Phi^{2n}$ is the identity.

\section{Relation between the spaces $\Ec_{w+1,n},\Fc_{w+1,n}$ and
 $\Cc_{k+1,n}$}\label{TriThmS}

In this section, we give more details about the relations
between the main spaces studied in this paper
and complete the proof of Theorem \ref{TriThm}.

\subsection{Proof of Theorem \ref{TriThm}, Part (i)}\label{TriThmP1S}
Let us prove that the map 
${\mathcal  E}_{k+1,n}\longrightarrow{\mathcal  F}_{k+1,n}$
constructed in Section~\ref{1stIso} is indeed an isomorphism
of algebraic varieties.

{\bf A}.
Let us first prove that this map is a bijection.
By the (anti)periodicity assumption (\ref{APeriod}), after $n-k-2$ numbers on each North-East diagonal, there appears 1, followed by $k$ zeros. Thus we indeed obtain an array of numbers bounded by a row of ones and $k$ rows of zeros.

Consider the determinants $D_{i,j}$ defined by (\ref{DEq})
for $j\geq{}i-1$. We show that $D_{i,j}=1$ by induction on $j$, assuming $i$ is fixed.
By construction, $k+1$ consecutive North-East diagonals of the frieze give a sequence $(V_j)_{j\in \Z}$
of vectors in $\R^{k+1}$ satisfying the difference equation \eqref{REq}, where
$$
V_j=
\begin{pmatrix}
d_{i,j}\\
d_{i+1,j}\\
\vdots\\
d_{i+k,j}
\end{pmatrix}.
$$
One has $D_{i,j}=\left|V_j,V_{j+1}, \cdots, V_{j+k}\right|$. 
Now we compute the first determinant
$$
D_{i,i-1}=
\left\vert
\begin{array}{cccc}
1&d_{i,i}&\ldots&d_{i,i+k-1}\\[4pt]
0&1&\ldots&d_{i+1,i+k-1}\\[4pt]
\ldots& \ldots&& \ldots\\[4pt]
0&0&\ldots&1
\end{array}
\right\vert=1.
$$
Since  the last coefficient in the equation \eqref{REq} satisfied by the sequence 
of the vectors $(V_j)$ is $a_{i}^{k+1}=(-1)^{k}$, 
one easily sees that $D_{i,j}=D_{i,j+1}$. 
Therefore $D_{i,j+1}=D_{i,j}=\cdots=1$. We have proved that the array 
$d_{i,j}$  is an $\SL_{k+1}$-frieze pattern.

\begin{lemma}
The defined frieze pattern is tame. 
\end{lemma}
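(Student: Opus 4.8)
The plan is to exploit the description, established in Part \textbf{A}, of the frieze entries as solutions of the difference equation \eqref{REq}. The decisive observation is that the recurrence \eqref{REq} governing each North-East diagonal $\mu_i=(d_{i,j})_j$ has coefficients $a_j^l$ depending only on the column index $j$, and \emph{not} on which diagonal one sits on. Consequently, stacking not $k+1$ but $k+2$ consecutive North-East diagonals $\mu_i,\ldots,\mu_{i+k+1}$ into the column vectors
$$
\widetilde V_j:=\bigl(d_{i,j},\,d_{i+1,j},\,\ldots,\,d_{i+k+1,j}\bigr)^{T}\in\R^{k+2},
$$
one obtains, by exactly the argument of Part \textbf{A}, a vector sequence satisfying the same order-$(k+1)$ recurrence $\widetilde V_j=a_j^1\widetilde V_{j-1}-a_j^2\widetilde V_{j-2}+\cdots+(-1)^k\widetilde V_{j-k-1}$: each component is a scalar solution of \eqref{REq} in $j$, hence so is the whole vector. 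This holds uniformly, including across the bounding rows of $1$'s and $0$'s, since those entries are themselves values of the solutions with initial data $(0,\ldots,0,1)$.

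Next I would identify an arbitrary $(k+2)\times(k+2)$ determinant of the frieze, formed from the rows $i,\ldots,i+k+1$ and columns $j,\ldots,j+k+1$, with the determinant $\left|\widetilde V_j,\widetilde V_{j+1},\ldots,\widetilde V_{j+k+1}\right|$ of $k+2$ consecutive such vectors. The key step is then immediate: evaluating the recurrence at index $j+k+1$ gives
$$
\widetilde V_{j+k+1}=a_{j+k+1}^1\widetilde V_{j+k}-a_{j+k+1}^2\widetilde V_{j+k-1}+\cdots+(-1)^k\widetilde V_j,
$$
so the last column is a linear combination of the preceding $k+1$ columns. Hence these $k+2$ vectors are linearly dependent in $\R^{k+2}$ and the determinant vanishes. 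Since $i$ and $j$ are arbitrary, every such $(k+2)\times(k+2)$ determinant is $0$, which is precisely tameness in the sense of Definition~\ref{tamedef}.

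I do not expect a genuine obstacle: the whole content is the elementary fact that an order-$(k+1)$ linear recurrence forces any $k+2$ consecutive iterates to be linearly dependent. The only points deserving care are bookkeeping ones — confirming that the coefficients $a_j^l$ of the recurrence are genuinely shared by all rows of the frieze, so that the vector sequence $\widetilde V_j$ really is a solution of \eqref{REq}, and making explicit that the determinants referred to in Definition~\ref{tamedef} are those of subarrays with adjacent rows and columns, so that the argument above indeed covers all of them.
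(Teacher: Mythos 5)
Your proof is correct and is essentially the paper's argument: the paper likewise observes that the entries of a $(k+2)\times(k+2)$ block come from $k+2$ consecutive diagonals that all solve the order-$(k+1)$ recurrence \eqref{REq}, whose solution space is only $(k+1)$-dimensional, forcing a linear dependence and hence a vanishing determinant. Your column-wise phrasing (the last column $\widetilde V_{j+k+1}$ is explicitly a combination of the preceding $k+1$) is just the transposed view of the same dependence.
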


\noindent
\begin{proof}
The space of solutions of the difference equation (\ref{REq}) is $k+1$-dimensional, therefore  every 
$(k+2)\times (k+2)$-block cut from $k+2$ consecutive diagonals
gives a sequence of linearly dependent vectors. Hence every $(k+2)\times (k+2)$-determinant vanishes.
\end{proof}

Conversely, consider a tame $\SL_{k+1}$-frieze pattern. 
Let $\eta_j=(\ldots,d_{i,j},d_{i+1,j},\ldots)$ be
the $j$th South-East diagonal. We claim that, for every $j$, the diagonal $\eta_{j}$ is a linear combination of $\eta_{j-1},\dots,\eta_{j-k-1}$:
\begin{equation}
\label{etarel}
\eta_{j}=a_{j}^{1} \eta_{j-1} - a_{j}^{2} \eta_{j-2} + \cdots + (-1)^{k-1} a_{j}^{k} \eta_{j-k}+(-1)^{k}a_{j}^{k+1} \eta_{j-k-1}.
\end{equation}
Indeed, consider a $(k+2)\times (k+2)$-determinant whose last column is on the diagonal $\eta_{j}$. Since the determinant vanishes, the last column is a linear combination of the previous $k+1$ columns. To extend this linear relation to the whole diagonal, slide the $(k+2)\times (k+2)$-determinant in the $\eta$-direction; this yields (\ref{etarel}). 

Next, we claim that $a_{j}^{k+1}=1$. To see this, choose a $(k+1)\times (k+1)$-determinant whose last column is on the diagonal $\eta_{j}$. By definition of $\SL_{k+1}$-frieze pattern, this determinant equals~1. On the other hand, due to the relation (\ref{etarel}), this determinant equals $a_{j}^{k+1}$ times a similar $(k+1)\times (k+1)$-determinant whose last column is on the diagonal $\eta_{j-1}$. The latter also equals~1, therefore $a_{j}^{k+1}=1$. 

We have shown that each North-East diagonal of the $\SL_{k+1}$-frieze pattern 
consists of solutions of the linear difference equation (\ref{etarel}) with $a_{j}^{k+1}=1$, 
that is, the difference equation (\ref{REq}). 
By definition of an $\SL_{k+1}$-frieze pattern, these solutions are (anti)periodic. Hence the coefficients are periodic as well. 

We proved that the map 
${\mathcal  E}_{k+1,n}\to{\mathcal  F}_{k+1,n}$
constructed in Section~\ref{1stIso} is one-to-one.

{\bf B}.
Let us now show that this map is a morphism of
algebraic varieties defined in Sections~\ref{AlgVarS} and~\ref{SubSGr}.

Recall that the structure of algebraic variety on $\Ec_{k+1,n}$
is defined by polynomial equations on the coefficients resulting from
the (anti)periodicity.
More precisely, these relations can be written in the form:
$$
d_{i,i+w}=1,
\qquad
d_{i,j}=0, \quad w<j<n,
$$
where $d_{i,j}$ are defined by~(\ref{ConstEq})
and calculated according to the formulas (\ref{DetEq}) and (\ref{DetEqDva})
that also make sense for~$j\geq{}w$.
These polynomial equations guarantee that the solutions of the equation~(\ref{REq}) are
$n$-(anti)periodic.
In other words, if $M$ is the monodromy
operator of the equation (which is, as well-known, an element of  the group $\SL_{k+1}$),
then the (anti)periodicity condition means that
$M=(-1)^k\mathrm{Id}$.
Note that exactly $k(k+2)$ of these equations are
algebraically independent, since this is the dimension of $\SL_{k+1}$.

The structure of algebraic variety on
the space $\Fc_{k+1,n}$ is given by the embedding
into $\Gr_{k+1,n}$.
The Grassmannian itself is an algebraic variety
defined by the Pl\"ucker relations, and the embedding $\Fc_{k+1,n}\subset\Gr_{k+1,n}$
is defined by the conditions that some of the Pl\"ucker coordinates are equal
to each other.

We claim that the map ${\mathcal  E}_{k+1,n}\to{\mathcal  F}_{k+1,n}$,
constructed in Section~\ref{1stIso},
is a morphism of algebraic varieties (i.e., a birational map).
Indeed, the coefficients $a_i^j$ of the equation~(\ref{REq})
are pull-backs of rational functions in Pl\"ucker coordinates,
see formula~\eqref{DuDeT}.
Moreover, all the determinants in these formulas are equal to $1$
when restricted to $\Ec_{k+1,n}$.
Conversely, the Pl\"ucker coordinates restricted to $\Fc_{k+1,n}$
are polynomial functions in~$a_i^j$.
This follows from the formulas~\eqref{DetEq} and~\eqref{DetEqDva}
and from the fact that the Pl\"ucker coordinates are polynomial in $d_{i,j}$.

This completes the proof of Theorem~\ref{TriThm}, Part (i).

\subsection{Second isomorphism, when $n$ and $k+1$ are coprime}\label{CoPSeC}

Let us prove that
the spaces of difference equations (\ref{REq}) with (anti)periodic solutions 
and the moduli space of $n$-gons in $\RP^k$ are  isomorphic algebraic varieties, 
provided the period $n$ and the dimension $k+1$ have no common divisors.

{\bf A}.
We need to check that the map (\ref{MaPP}) is, indeed, a one-to-one correspondence
between ${\mathcal E}_{k+1,n}$ and ${\mathcal C}_{k+1,n}$. 
Let us construct the inverse map to (\ref{MaPP}).
Consider a non-degenerate $n$-gon $(v_i)$ in~$\RP^k$.
Choose an arbitrary lift $(\tilde V_i)\in\R^{k+1}$ of the vertices.
The $k+1$  coordinates $\tilde V_i^{(1)},\ldots,\tilde  V_i^{(k+1)}$
of the vertices of this $n$-gon are solutions to some
(and the same) difference equation (\ref{REq}) if and only if the determinant (\ref{detconst})
is constant (i.e., independent of $i$).
We thus wish to define a new lift $V_i=t_i \tilde V_i$ such that
$$
t_i t_{i+1}\cdots t_{i+k}
\left|
\tilde V_i, \tilde V_{i+1},\ldots,\tilde V_{i+k}
\right|=1.
$$
This system of equations on $t_1,\ldots, t_n$ has a unique solution if and only if $n$ and $k+1$ are coprime.
Finally, two projectively equivalent $n$-gons correspond to the same equation. 
Thus the map (\ref{MaPP}) is a bijection.

{\bf B}.
The structures of algebraic varieties are in full accordance
since the projection from~$\Gr_{k+1,n}$ to ${\mathcal C}_{k+1,n}$
is given by the projection with respect to
the $\mathbb{T}^{n-1}$-action which is an algebraic action of an algebraic group.

Theorem~\ref{TriThm}, Part (ii) is proved.

\subsection{Proof of Proposition~\ref{NewProp}}\label{ProProSec}

Consider finally the case where $n$ and $k+1$ have common divisors.
Suppose that $\gcd(n,k+1)=q \neq 1$. 
In this case, the constructed map 
is not injective and its image is a subvariety in the moduli space of polygons.

As before, we assign an $n$-gon $V_1,\ldots,V_n \in \R^{k+1}$ to a difference equation. 
Given numbers $t_0,\ldots,t_{q-1}$ whose product is 1, we can rescale
$$
V_j \mapsto t_{j\ {\rm mod}\ q}\ V_j,\quad j=1,\ldots,n,
$$
keeping the determinants $\left|V_i, V_{i+1},\ldots,V_{i+k}\right|$ intact.
This action of $({\R^*})^{q-1}$ does not affect the projection of the polygons to $\RP^k$. Thus this projection has at least $q-1$-dimensional fiber.

To find the dimension of the fiber, we need to consider the system of equations
$$
t_i t_{i+1}\cdots t_{i+k}=1,\quad i=1,\ldots,n,
$$
where, as usual, the indices are understood cyclically mod $n$. Taking logarithms, this is equivalent to a linear system with the circulant matrix
$$
\left( 
\begin{array}{ccccccccc}
1&1&\ldots&1&1&0&\ldots&0&0\\
0&1&\ldots&1&1&1&0&\ldots&0\\[4pt]
&&\ddots&&&&\ddots\\[4pt]
0&0&\ldots&0&1&1&1&\ldots&1\\
1&0&\ldots&0&0&1&1&\ldots&1\\[4pt]
&&\ddots&&&&\ddots\\[4pt]
1&1&\ldots&1&0&0&\ldots&0&1\\
\end{array}
\right)
$$
with $k+1$ ones in each row and column. 

The eigenvalues of such a matrix are given by the formula
$$
1+\omega_j+\omega_j^2 + \dots + \omega_j^k,\quad j=0,1,\ldots,n-1,
$$
where $\omega_j = \exp(2\pi {\bf i} j/n)$ is $n$th root of 1, see \cite{Da}. If $j>0$, the latter sum equals 
$$
\frac{\omega_j^{k+1}-1}{\omega_j -1},
$$
and it equals 0 if and only if $j(k+1) = 0 \mod n$. This equation has $q-1$ solutions, hence the circulant matrix has corank $q-1$.
Proposition~\ref{NewProp} is proved.

\subsection{The $\SL_2$-case: relations to Teichm\"uller theory}

Now we give a more geometric description of the image of the map~(\ref{MaPP})
in the case $k=1$.
If $n$ is odd then this map is one-to-one, but if $n$ is even then 
its image has codimension 1. 
To describe this image, we need some basic facts from decorated Teichm\"uller theory \cite{Pen,Pen2}.

Consider $\RP^1$ as the circle at infinity of the hyperbolic plane. 
Then a polygon in $\RP^1$ can be thought of as an ideal polygon in
the hyperbolic plane $\mathcal{H}^2$. 
A decoration of an ideal $n$-gon is a choice of horocycles centered at its vertices. 

Choose a decoration and define the side length of the polygon as the signed hyperbolic distance 
between the intersection points of the respective horocycles with this side; the convention is that  
if the two consecutive horocycles are disjoint then the respective distance is positive
(one can always assume that the horocycles are ``small" enough). 
Denoting the side length by~$\delta$, the lambda length is defined as $\lambda=\exp{(\delta/2)}$.

Let $n$ be even. Define the alternating perimeter length of an ideal $n$-gon: choose a decoration and consider the alternating sum of the side lengths.
The alternating perimeter length of an ideal even-gon does not depend on the decoration: changing a horocycle adds (or subtracts) the same length to two adjacent sides of the polygon and does not change the alternating sum. 

\begin{proposition}
\label{perim}
The image of the map~(\ref{MaPP}) with $k=1$ and $n$ even
consists of polygons with zero alternating perimeter length.
\end{proposition}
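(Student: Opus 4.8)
The plan is to translate both sides of the claimed equality into the language of Penner's $\lambda$-lengths \cite{Pen,Pen2} and then reduce the statement to the solvability of a cyclic multiplicative system of the kind already met in Section~\ref{CoPSeC}. The bridge between the algebra of lifts and the hyperbolic geometry is the elementary identity that, for a lift $V_i\in\R^2$ of a vertex $v_i\in\RP^1$, the $\lambda$-length of the side $(v_i,v_{i+1})$ equals $|\det(V_i,V_{i+1})|$. I would prove this by embedding $\R^2$ into the Minkowski light cone via $\phi(x,y)=(x^2-y^2,\,2xy,\,x^2+y^2)$: a direct computation with the form of signature $(+,+,-)$ gives $\langle\phi(V),\phi(W)\rangle=-2\det(V,W)^2$, so $\phi(V_i)$ is exactly the horocycle (decoration) cut out by $V_i$, and $\lambda(v_i,v_{i+1})=|\det(V_i,V_{i+1})|$. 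Hence the signed side length is $\delta_i=2\log|\det(V_i,V_{i+1})|$, which is the computation that makes the whole argument run.

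\textbf{Forward direction.} Suppose $(v_i)$ lies in the image of the map~(\ref{MaPP}). Then it is the projection of a lift $(V_i)$ coming from an equation~(\ref{REq}) with $k=1$, and by~(\ref{detconst}) the determinants $\det(V_i,V_{i+1})=\mathrm{Const}$ are all equal. For the decoration attached to this particular lift, every side length is therefore the same number $2\log|\mathrm{Const}|$, and since $n$ is even the alternating sum $\sum_{i=1}^n(-1)^i\delta_i$ vanishes. Because the alternating perimeter length is independent of the decoration (as observed just before the statement), it is zero for $(v_i)$.

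\textbf{Converse direction.} Conversely, given $(v_i)$ with zero alternating perimeter length, I would fix any lift $(\tilde V_i)$, set $d_i=\det(\tilde V_i,\tilde V_{i+1})$, and seek a rescaling $V_i=t_i\tilde V_i$ for which $\det(V_i,V_{i+1})=t_it_{i+1}d_i$ is independent of $i$; by Section~\ref{CoPSeC} such a lift automatically solves a Hill equation and thus yields a preimage in $\Ec_{2,n}$. Taking logarithms of $|t_i|\,|t_{i+1}|=|c|/|d_i|$ turns the problem into the cyclic linear system $u_i+u_{i+1}=b_i$ with $b_i=\log|c|-\log|d_i|$. For $n$ even the coefficient matrix has a one-dimensional cokernel spanned by the alternating vector $((-1)^i)$, so the only obstruction is $\sum_i(-1)^i b_i=0$. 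The terms $\log|c|$ cancel (again because $n$ is even), leaving exactly $\sum_i(-1)^i\log|d_i|=\tfrac12\sum_i(-1)^i\delta_i=0$, which is the hypothesis. Hence the magnitudes $|t_i|$ can always be found.

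\textbf{The main obstacle.} The delicate point is upgrading this magnitude solution to a genuine element of $\Ec_{2,n}$. First, I must take the initial lift antiperiodically, $\tilde V_{i+n}=-\tilde V_i$, so that the rescaled lift has monodromy $-\mathrm{Id}$ and not $+\mathrm{Id}$, matching the antiperiodicity~(\ref{APeriod}). Second, I must fix the \emph{sign} of the constant, i.e. arrange $\mathrm{sign}(t_it_{i+1}d_i)$ to be the same for all $i$; multiplying these sign relations around the cycle forces $\prod_i\mathrm{sign}(d_i)=1$, which is not automatic for an arbitrarily labelled configuration but holds precisely when the vertices wind monotonically around $\RP^1$. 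This is consistent, since the constant-determinant condition forces the image polygons to be exactly these monotone (convex) ideal polygons, so the sign invariant equals $+1$ on the relevant component and the rescaling closes up. Checking this orientation/parity matching, together with verifying that the reconstructed coefficients $a_i$ come out $n$-periodic, is the technical heart of the converse; the $\lambda$-length dictionary and the cyclic-system obstruction computation are its conceptual core.
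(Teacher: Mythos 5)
Your argument is correct in substance but takes a genuinely different route from the paper's. The paper introduces the cross-ratios $x_i=[v_{i-1},v_i,v_{i+1},v_{i+2}]$, characterizes the image by the relation $\prod_{i\ \mathrm{odd}}x_i=\prod_{i\ \mathrm{even}}x_i$ (via the identity $x_i=c_ic_{i+1}$ for a normalized lift), and only at the very end translates this condition into $\lambda$-lengths using Penner's cross-ratio formula. You instead install the dictionary $\lambda(v_i,v_{i+1})=|\det(V_i,V_{i+1})|$ at the outset (your light-cone computation $\langle\phi(V),\phi(W)\rangle=-2\det(V,W)^2$ is correct), which makes the forward direction immediate --- constant determinants give equal side lengths for the induced decoration, and the alternating sum of $n$ equal numbers vanishes for $n$ even --- and reduces the converse to exactly the right linear-algebra statement: the cyclic system $u_i+u_{i+1}=b_i$ has corank one for $n$ even, with the one obstruction $\sum_i(-1)^ib_i$, which is precisely the alternating perimeter. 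This is arguably cleaner than the paper's detour through cross-ratios, and it makes transparent why the image has codimension one.

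The one place where your write-up goes astray is the sign discussion in your last paragraph --- though this is also the step the paper itself silently glosses over (it picks a lift with all $|\tilde V_i,\tilde V_{i+1}|>0$ without checking that such a lift is antiperiodic, and concludes from $(\cdot)^2=1$ that the rescaling exists). The necessary condition $\prod_i\mathrm{sign}(d_i)=+1$, computed for an antiperiodic lift, is a locally constant $\Z/2\Z$ invariant of the polygon, namely the parity of the winding of $(v_i)$ around $\RP^1$; it is \emph{not} equivalent to the vertices winding monotonically. For instance, a hexagon advancing by $\pi/2$ at each step winds three times, is far from embedded, and has all $d_i>0$, while a non-monotone polygon with exactly two negative $d_i$ also passes the sign test; conversely, polygons of even winding with zero alternating perimeter do exist and are not in the image. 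So the correct statement of what your (and the paper's) converse proves is that the image is the intersection of the zero-alternating-perimeter hypersurface with the odd-winding components; your proposed repair via convexity would both exclude legitimate preimages and fail to exclude all illegitimate ones. If you keep that paragraph, replace the appeal to convexity by the winding-parity invariant.
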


\begin{proof}
Let $(v_i)$ be a polygon in $\RP^1$. Let $x_i=[v_{i-1},v_i,v_{i+1},v_{i+2}]$ be the cross-ratio of the four consecutive vertices. 
Of six possible definitions of cross-ratio, we use the following one:
$$
[t_1,t_2,t_3,t_4]=\frac{(t_1-t_3)(t_2-t_4)}{(t_1-t_2)(t_3-t_4)}.
$$
This is the reciprocal of the formula in equation \eqref{inverse-cross-ratio}.

We claim that a $2n$-gon is in the image of~(\ref{MaPP}) if and only if 
\begin{equation}
\label{altprod}
\prod_{i\ {\rm odd}} x_i = \prod_{i\ {\rm even}} x_i.
\end{equation}

Indeed, let $(V_i)\subset \R^2$ be an anti periodic solution to the discrete Hill's equation
$$
V_{i+1}=c_i V_i - V_{i-1}
$$
with $|V_i,V_{i+1}|=1$. Then 
$$
x_i=\frac{|V_{i-1},V_{i+1}| |V_i,V_{i+2}|}{|V_{i-1},V_i| |V_{i+1},V_{i+2}|} = c_i c_{i+1}.
$$
Therefore (\ref{altprod}) holds.

Conversely, let (\ref{altprod}) hold. Let
$(\tilde V_i)$ be a lift of $(v_i)$ to $\R^2$ with $|\tilde V_i,\tilde V_{i+1}|>0$. 
As before, we want to renormalize these vectors so that the consecutive determinants equal 1. 
This boils down to solving the system of equations $t_i t_{i+1} |\tilde V_i,\tilde V_{i+1}|=1$. This system has a solution if and only if
$$
\prod_{i\ {\rm odd}} |\tilde V_i,\tilde V_{i+1}|= \prod_{i\ {\rm even}} |\tilde V_i,\tilde V_{i+1}|.
$$
On the other hand, one computes that
$$
1=\frac{\prod_{i\ {\rm odd}} x_i}{\prod_{i\ {\rm even}} x_i} = \left(\frac{\prod_{i\ {\rm even}} |\tilde V_i,\tilde V_{i+1}|}{\prod_{i\ {\rm odd}} |\tilde V_i,\tilde V_{i+1}|}\right)^2,
$$
and the desired rescaling exists.

Finally, one relates cross-ratios with lambda lengths, see \cite{Pen,Pen2}:
$$
[v_{i-1},v_i,v_{i+1},v_{i+2}]=\frac{\lambda_{i-1,i+1} \lambda_{1,i+2}}{\lambda_{i-1,i}\lambda_{i+1,i+2}}.
$$
Therefore
$$
1=\frac{\prod_{i\ {\rm odd}} x_i}{\prod_{i\ {\rm even}} x_i} = \left(\frac{\prod_{i\ {\rm even}} \lambda_{i,i+1}}{\prod_{i\ {\rm odd}} \lambda_{i,i+1}}\right)^2 = e^{\sum (-1)^i \delta_i}.
$$
It follows that the alternating perimeter length is zero.
\end{proof}

\medskip

\noindent \textbf{Acknowledgments}.
We are pleased to thank D. Leites and A.~Veselov for enlightening discussions.
This project originated at the Institut Math\'ematique de Jussieu (IMJ), Universit\'e Paris~6. 
S.~T. is grateful to IMJ for its hospitality.
S.~M-G. and V.~O. were partially supported by the PICS05974 ``PENTAFRIZ'' of CNRS. 
R.E.S. was supported by NSF grant DMS-1204471.
S.~T. was  supported by   NSF grant DMS-1105442.

\end{document}